\newtheorem{theorem}{Theorem}[section]
\newtheorem{lemma}[theorem]{Lemma}
\newtheorem{corollary}[theorem]{Corollary}
\newtheorem{proposition}[theorem]{Proposition}
\newtheorem{question}[theorem]{Question}
\theoremstyle{definition}
\newtheorem{definition}{Definition}[section]
\newtheorem{example}{Example}[section]
\numberwithin{equation}{section}
\newcommand{\De}{\mathbb{D}}
\newcommand{\I}{\mathbb{I}}
\newcommand{\hP}{\hat{P}}
\newcommand{\uP}{\widetilde{P}}
\newcommand{\rank}{\mathrm{rank}\,}
\newcommand{\SL}{\mathbf{SL}^{\pm}_{n+1}(\mathbb{R})}
\newcommand{\GL}{\mathbf{GL}_{n+1}(\mathbb{R})}
\newcommand{\G}{\mathbb{G}}
\newcommand{\f}{\vartheta}
\newcommand{\Hom}{\mathrm{Hom}}
\newcommand{\PO}{\mathbf{PO}(1,n)}
\newcommand{\M}{\mathrm{M}}
\newcommand{\PS}{\mathbb{S}^{n}}
\newcommand{\Clo}{\overline{\Omega}}
\newcommand{\PV}{\mathbb{PV}}
\newcommand{\ra}{\rightarrow}
\newcommand{\bZ}{\mathbb{Z}}
\newcommand{\F}{\mathbb{F}}
\newcommand{\FK}{\mathfrak{F}K}
\newcommand{\hol}{\mathrm{hol}}
\newcommand{\PGL}{\mathbf{PGL}_{n+1}(\mathbb{R})}
\newcommand{\R}{\mathbb{R}}
\newcommand{\qua}{\,}
\def\co{\colon\thinspace}
\begin{document}

\title[Projective deformations of hyperbolic Coxeter orbifolds]{Projective deformations of weakly orderable hyperbolic Coxeter orbifolds}

%
\author[S Choi]{Suhyoung Choi}
\address{Department of Mathematical Sciences\\
KAIST\\\newline
Daejeon 305-701\\Republic of Korea}
\email{schoi@math.kaist.ac.kr}
\urladdr{http://mathsci.kaist.ac.kr/~schoi/}

%
\author[G-S Lee]{Gye-Seon Lee}
\address{Mathematisches Institut \\
Ruprecht-Karls-Universit\"{a}t Heidelberg\\\newline
D-69120 Heidelberg\\Germany}
\email{lee@mathi.uni-heidelberg.de}
\urladdr{http://www.mathi.uni-heidelberg.de/~lee/}
%
%
%
%

\keywords{Real projective structure, Orbifold, Moduli space, Coxeter groups, Representations of groups}


\subjclass{[2000]Primary 57M50; Secondary 57N16, 53A20, 53C15}

\date{\today} 

\begin{abstract}    
A Coxeter $n$--orbifold is  an $n$--dimensional orbifold based on a polytope with silvered boundary facets. 
Each pair of adjacent facets meet on a ridge of some order $m$,
 whose neighborhood is locally modeled on $\R^n$ modulo the dihedral group of order $2m$ generated by two reflections.
For $n \geq 3$, we study the deformation space of real projective structures on a compact Coxeter $n$--orbifold $Q$
admitting a hyperbolic structure. 
Let $e_+(Q)$ be the number of ridges of order $\geq 3$.
A neighborhood of the hyperbolic structure in the deformation space is a cell of dimension $e_+(Q) \!-n$
if $n=3$ and $Q$ is weakly orderable, ie the faces of $Q$ can be ordered so that each face contains
at most $3$ edges of order $2$ in faces of higher indices, or $Q$ is based on a truncation polytope.

\end{abstract}

\maketitle


\section{Introduction}
\label{s:intro}
In this paper, an {\em $n$--orbifold} $Q$ is based on a quotient space of a simply connected manifold $\widetilde Q$ by
a discrete group $\Gamma$ acting on $\widetilde Q$ properly discontinuously. An {\em orbifold structure} on 
$Q$ is given by a covering by open sets of form $\phi(U)$ with
a model $(U, H, \phi)$ where $U$ is an open subset of $\widetilde Q$, $H$ is a finite subgroup of $\Gamma$ acting 
on $U$, and $\phi$ induces a homeomorphism $U/H \ra \phi(U)$. 
Here, $\widetilde Q$ is said to be a {\em universal cover} of $Q$, 
and $\Gamma$ is the {\em fundamental group} and is denoted by $\pi_1(Q)$.

A \emph{Coxeter group} is a group having a group presentation
\begin{displaymath}
\langle \; r_i \;|\; (r_i r_j)^{n_{ij} } \; (i,j \in \I) \;\rangle
\end{displaymath}
where $\I$ is a set, $n_{ii}=1$ for each $i \in \I$, and $n_{ij} \in \{2,3,\dotsc, +\infty\}$ is symmetric.
Note that $n_{ij}= +\infty$ means no relation between $r_i$ and $r_j$.

A point in an $n$--orbifold $Q$ is called a {\em silvered point} if it has an open neighborhood of form $\phi(U)$ with a model $(U, \bZ/2\bZ, \phi)$
for an open set $U$ in $\widetilde Q$ and a $\bZ/2\bZ$--action on $U$ fixing a hypersurface in $U$.
A \emph{Coxeter $n$--orbifold} $\hP$ is an $n$--dimensional orbifold whose base space is an $n$--dimensional polyhedron $P$ with
finitely many sides where all interior points of the facets are silvered. 
The fundamental group $\pi_1(\hP)$ is isomorphic to a Coxeter group, and is generated by reflections about sides of the fundamental domain $P$.
We will study only compact ones in this paper, ie closed ones.
(More precisely, Davis \cite{Davis2010, Davis2013} calls such an orbifold a {\em Coxeter orbifold of type III}, 
an {\em orbifold of reflection type}, or a {\em reflectofold}.)


Let $V$ be an $(n+1)$--dimensional real vector space.
The {\em projective sphere} $\PS$ is the space of rays in $V$ and is a double cover of $\mathbb{RP}^n$.
Let $$\SL = \{ A \in \GL \co \det(A)= \pm 1 \}.$$
The group $\SL$ acts on $\PS$ effectively in the standard manner and is a double cover of $\PGL$.
The elements of $\SL$ are the \emph{projective automorphisms} of $\PS$
and $\SL$ the \emph{projective automorphism group} of $\PS$.
(We will also think of $\SL$ as a linear group when it is convenient.)
Denote by $\Pi$ the natural projection from $V\backslash\{0\}$ into $\PS$.
A subspace of $\PS$ is the image of a subspace of $V$ with the origin removed.
In particular, a $2$--dimensional subspace of $V$ corresponds to a great circle in $\PS$, 
and an $n$--dimensional subspace gives a great $(n-1)$--sphere in $\PS$.
Furthermore, a component of the complement of a great $(n-1)$--sphere has the canonical structure of an affine $n$--space, as
the complement of a codimension-one subspace of $\mathbb{RP}^n$ is an affine subspace.
We call this an \emph{affine subspace} of $\PS$.

A {\em convex segment} in $\PS$ is a connected arc contained in a great circle but not containing a pair of antipodal points
in its interior.  A subset $A$ of $\PS$ is {\em convex} if any two points of $A$ are connected
by a convex segment in $A$. 
An affine space has  
a notion of geodesics as arcs in one--dimensional affine subspaces.
A subset of an affine subspace of $\PS$ is convex if and only if it is convex in the ordinary affine sense.
A {\em properly convex} subset of $\PS$ is a bounded convex subset of an affine subspace. (See \cite[Chapter 2]{Choi99}.)

A {\em side} of a compact properly convex set $P$
is a maximal convex subset of the boundary of $P$.
A {\em polytope} is a compact properly convex domain in $\PS$ with finitely many sides.
By a {\em facet} of a polytope, we mean a side of $P$ of codimension-one.
By a {\em ridge} of a polytope, we mean a side of $P$ of codimension-two.
(A facet will be called a {\em face} and a ridge an {\em edge} if $P$ is three--dimensional.)
If $P$ is the base space of a Coxeter orbifold, then
each ridge where the facets $F_i$ and $F_j$ meet will be given an order $n_{ij} \geq 2$; ie, 
a ridge has an {\em order} $n_{ij}$ if a model neighborhood of each interior point of the ridge is given 
the usual product extension of the standard action of the dihedral group $D_{n_{ij}}$ of order $2 n_{ij}$ on the $2$--plane. 

Given a Lie group $G$ acting on a manifold $X$ transitively, we can consider
a \emph{$(G, X)$--structure} on an orbifold $Q$
as a pair of an immersion $D:\widetilde Q \ra X$ and
a homomorphism $h:\pi_1(Q) \ra G$ satisfying
\[ h(\gamma) \circ D = D \circ \gamma \hbox{ for } \gamma \in \pi_1(Q).\]
For a given $(G, X)$-structure, $(D, h)$ is determined only up to the action 
\[ g(D, h(\cdot)) = (g\circ D, gh(\cdot)g^{-1}) \hbox{ for }  g \in G.\]
(In each case we are considering, $D$ is an embedding.)


A \emph{real projective structure} on $Q$ is a $(G,X)$--structure on $Q$ with
\begin{displaymath}
G=\SL \quad \text{and} \quad X=\PS.
\end{displaymath}
(See also Section \ref{subs:ele}.)

We can represent hyperbolic structures on
an $n$--orbifold using the Klein projective model: 
Let the Lorentzian inner product be given by 
\begin{displaymath}
 \langle x, y\rangle = -x_1 y_1 + x_2 y_2 +  \cdots + x_{n+1} y_{n+1},
\end{displaymath}
where $x_i$ for $i=1, \dots, n+1$ are components of $x \in V$ and 
$y_i$ for $i=1, \dots, n+1$ are ones for $y \in V$. 
The hyperbolic space $\mathbb{H}^n$ is an open ball $B$ in $\PS$ that is
the image under $\Pi$ of  positive time-like vectors. 
The group of hyperbolic isometries is the subgroup $\PO$ of $\SL$ acting on $B$.
Hence a hyperbolic Coxeter orbifold, being of form $\mathbb{H}^n/\Gamma$ for 
a discrete subgroup $\Gamma$ of $\PO$, naturally has an induced real projective structure.

Real projective structures have been studied by many mathematicians including Kuiper \cite{Kuiper53}, Benz{\'e}cri \cite{Benzecri60}, Koszul \cite{Koszul68}, Vinberg \cite{Vinberg71}, Goldman \cite{Goldman90, cg}, Choi \cite{cdcr1, cdcr2}, and Benoist \cite{Benoist01}.
Sometimes the topic is studied as the theory of linear representations of discrete groups by Koszul, Vinberg, Benoist, and so on. 
Kac and Vinberg \cite{Vinberg67} were the first to discover hyperbolic Coxeter $2$--orbifolds
where the induced real projective structures deform into families of real projective structures that are not induced from hyperbolic structures.
Johnson and Millson \cite{Johnson87} constructed projective bending deformations of compact hyperbolic manifolds with embedded hypersurfaces.
Cooper, Long and Thistlethwaite \cite{Cooper06,Cooper07} investigated whether the closed hyperbolic $3$--manifolds of the Hodgson--Weeks census could be deformed and showed some occurrence of deformability. Benoist \cite{Benoist06}, 
Choi \cite{Choi06}, Marquis \cite{Marquis10}, and Choi, Hodgson and Lee \cite{Choi12} investigated classes of deformable projective Coxeter orbifolds. 
Heusener and Porti \cite{Heusener11} provided infinite families of hyperbolic $3$--manifolds that are projectively rigid
by Dehn filling. (See also Ballas \cite{Ballas2012}.) 
Surveys on real projective structures can be found in \cite{Benoist08} and \cite{Cbook}.

The \emph{deformation space $\De(Q)$ of real projective structures} on a closed orbifold $Q$ is the quotient space of 
the space of real projective structures on $Q$ by the action of the group of isotopies of $Q$.
The space has a natural $C^s$--topology for $s \geq 1$. (See \cite{Choi04} or \cite[Chapter 6]{Cbook}.)


Now we fix the dimension $n \geq 3$.
Let $P$ be an $n$--dimensional complete hyperbolic convex polytope with
dihedral angles that are submultiples of $\pi$; we call $P$ a \emph{hyperbolic Coxeter $n$--polytope}.
Then $P$ naturally has a Coxeter orbifold structure $\hP$ by silvering the facets. 
When a ridge has the dihedral angle $\tfrac{\pi}{n_{ij}}$,  the ridge has the order $n_{ij}$. 
The point $t$ in $\De(\hP)$ is \emph{hyperbolic} if  a hyperbolic structure on $\hP$ represents $t$.

\begin{definition}
Let $P$ be a compact hyperbolic Coxeter $n$--polytope, and let $\hP$ denote $P$ with its Coxeter orbifold structure.
Suppose that $t$ is the corresponding hyperbolic point of $\De(\hP)$.
We call a neighborhood of $t$ in $\De(\hP)$ the \emph{local deformation space} of $\hP$ at $t$.
We say that $\hP$ is \emph{projectively deformable} at $t$, or simply \emph{deforms} at $t$,
if the dimension of its local deformation space at $t$ is positive.
Conversely, we say that $\hP$ is \emph{locally projectively rigid} at $t$, or \emph{locally rigid} at $t$, if the dimension of
its local deformation space at $t$ is $0$.
\end{definition}


\begin{definition}
Let $\hP$ be a compact Coxeter $3$-orbifold with a base polytope $P$. 
Then $\hP$ is \emph{weakly orderable} if the faces $F_1, \dots, F_f$ of $P$ can be labeled
by integers $\{1, \dots, f\}$ so that
for each face $F_i$, 
the cardinality of 
\[\mathcal{F}_i :=  \{ F_j|\, j > i \hbox{ and the ridge $F_i \cap F_j$ has order $2$ } \}\]
is less than or equal to $3$. 
\end{definition}
In our case, the base polytope $P$ is always realizable as a convex polytope in an affine space
since $P$ is the fundamental polytope for a properly convex real projective Coxeter orbifold.
A compact properly convex $n$--polytope $P$ is called \emph{simple} if exactly $n$ facets meet at each vertex.
Note that compact hyperbolic Coxeter $n$--polytopes are simple. Denote by $e_+(\hP)$ the number of ridges of order $\geq 3$ in $\hP$.

We now state two results of the paper that follows from Theorem \ref{thm:main1}, the main result of the paper. 
\begin{corollary}\label{cor:main1}
Let $P$ be a compact hyperbolic Coxeter $3$--polytope, and suppose that $\hP$ is the Coxeter orbifold arising from $P$.
Suppose that $\hP$ is weakly orderable.
Then a neighborhood of the hyperbolic point $t$ in $\De(\hP)$ is a cell of dimension $e_+(\hP) \!- 3$.
\end{corollary}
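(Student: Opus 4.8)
The plan is to treat Corollary \ref{cor:main1} as the $n=3$, weakly orderable case of Theorem \ref{thm:main1}, so that the task is to realize a neighborhood of the hyperbolic point $t$ as a smooth cell inside a space of Coxeter representations and to identify its dimension. First I would invoke the deformation theory of real projective orbifolds: by the Ehresmann--Thurston principle in the orbifold setting (see \cite{Choi04} and \cite[Chapter 6]{Cbook}), a neighborhood of $t$ in $\De(\hP)$ is locally homeomorphic to a neighborhood of the hyperbolic holonomy in the space of representations $\pi_1(\hP)\to \mathbf{SL}^{\pm}_{4}(\mathbb{R})$ sending each face-generator to a projective reflection, taken up to conjugation. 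It then suffices to analyze this Coxeter representation space near the hyperbolic point, to show that it is smooth there, and to compute its local dimension as $e_+(\hP)-3$.

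Next I would coordinatize the representation space by Vinberg's reflection data \cite{Vinberg67}: to each face $F_i$ attach a projective reflection $R_i = \mathrm{Id} - \mathbf{b}_i\boldsymbol{\alpha}_i^{T}$ normalized by $\boldsymbol{\alpha}_i(\mathbf{b}_i)=2$, and record the configuration in the Cartan matrix $A=(a_{ij})$ with $a_{ij}=\boldsymbol{\alpha}_i(\mathbf{b}_j)$ and $a_{ii}=2$. The Coxeter relations become algebraic conditions on $A$: an order-$2$ edge $F_i\cap F_j$ forces the reflections to commute and hence $a_{ij}=a_{ji}=0$, while an order-$m\ge 3$ edge imposes the single relation $a_{ij}a_{ji}=4\cos^2(\pi/m)$, leaving one ratio $a_{ij}/a_{ji}$ free. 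The two remaining ingredients are the diagonal gauge $\mathbf{b}_i\mapsto t_i\mathbf{b}_i$, $\boldsymbol{\alpha}_i\mapsto t_i^{-1}\boldsymbol{\alpha}_i$, under which $a_{ij}\mapsto (t_j/t_i)\,a_{ij}$, and the realizability requirement that $A$ have rank $n+1=4$ so that it is carried by genuine vectors in a $4$-dimensional space. Weighing the $e_+(\hP)$ surviving ratios against the gauge and the rank condition is what yields the dimension $e_+(\hP)-3$; doing this rigorously, rather than as a naive parameter count, is itself part of the work, since the off-diagonal entries for non-adjacent faces interact with the rank constraint.

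The hard part, and the step where weak orderability is indispensable, is to upgrade this count to the assertion that $t$ is a smooth point and that an entire neighborhood is a cell of dimension exactly $e_+(\hP)-3$, with no hidden obstructions. Here I would use the labeling $F_1,\dots,F_f$ provided by weak orderability and build the reflection data one face at a time in order of \emph{decreasing} index, so that when $F_i$ is adjoined the faces already placed are precisely those of higher index. The conditions that $F_i$ must then satisfy against the already-placed faces are the vanishing conditions $a_{ij}=a_{ji}=0$ attached to its order-$2$ edges with higher-indexed faces, namely the set $\mathcal{F}_i$, which has at most three elements by hypothesis. Since a supporting plane of $F_i$ in $\mathbb{RP}^3$ carries exactly three projective degrees of freedom, these at most three scalar conditions can be satisfied transversally, so the relevant differential is surjective at each stage; iterating over the ordering shows that the defining equations cut out a smooth submanifold of the predicted dimension near $t$, and a neighborhood of a point in a smooth manifold is a cell. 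The crux that I expect to resist routine calculation is establishing this surjectivity uniformly along the whole family, that is, that the at most three order-$2$ constraints on each face remain independent throughout the deformation; this is exactly the phenomenon that the threshold $3$ in the definition of weak orderability is tailored to control, and it is where one rules out the degenerations that would arise if some face carried four or more such constraints.
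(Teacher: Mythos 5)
Your overall architecture matches the paper's: reduce to the $n=3$ case of Theorem \ref{thm:main1} (condition $(C1)$, $\delta_P=0$, holds automatically for simple $3$--polytopes by Euler's formula, which is literally all the paper's proof of the corollary says), identify a neighborhood of $t$ in $\De(\hP)$ with a solution space of Vinberg's equations modulo the gauge group, and use weak orderability for transversality. But there is a genuine gap at the decisive step. The variety is cut out by \emph{all} of Vinberg's equations: the $f$ normalizations $\alpha_i b_i=2$, the $2e_2$ equations $a_{ij}=a_{ji}=0$ from order-$2$ ridges, and the $e_+$ equations $a_{ij}a_{ji}=4\cos^2(\pi/n_{ij})$ from ridges of order $\geq 3$; the count $8f-(f+e+e_2)-(f+15)=e_+(\hP)-3$ gives the stated dimension only if the differential $D\Phi_{\hP}$ of the \emph{entire} system is surjective at the hyperbolic point. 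Your face-by-face argument addresses only the order-$2$ constraints attached to $\mathcal{F}_i$ and says nothing about why the remaining $f+e_+$ rows are independent of each other and of the order-$2$ block. In the paper this is precisely where a second, indispensable ingredient enters: after row and column operations (using $\alpha_i=2b_i^tJ$, valid only at the hyperbolic point) the non-order-$2$ rows of $[D\Phi_{\hP}]$ reduce to the Jacobian $[D\Psi_{\hP}]$ of the hyperbolic equations, and $\rank D\Psi_{\hP}=(n+1)f-\tfrac{n(n+1)}{2}=f+e$ is obtained from Mostow rigidity together with the Weil infinitesimal vanishing $H^1(\pi_1(\hP),so(1,n))=0$ (Proposition \ref{prop:ident}); weak orderability then contributes the extra $e_2$ to reach full rank $f+e+e_2$ (Lemma \ref{lem:ranksum}). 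Neither rigidity ingredient appears in your proposal, and without it the ``iterate over the ordering'' step cannot conclude that the variety is smooth of the predicted dimension at $t$.

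Two further inaccuracies in the transversality step itself. Each order-$2$ ridge imposes \emph{two} scalar equations, $\alpha_i(b_j)=0$ and $\alpha_j(b_i)=0$, so the face $F_i$ carries up to six conditions against the higher-indexed faces, matched by the six (not three) degrees of freedom of a projective reflection (three for the fixed hyperplane $\alpha_i$ and three for the reflection vector $b_i$); your ``at most three scalar conditions versus three degrees of freedom of a supporting plane'' undercounts both sides, and the independence actually required is general position of the relevant $\alpha_j$ and $b_j$ for $F_j\in\mathcal{F}_i$, which for $n=3$ is automatic by Lemma 3 of \cite{Choi12}. Finally, passing from a smooth solution variety to a cell in $\De(\hP)$ still requires the gauge group $\mathbb{R}_+^f\times\SL$ to act freely and properly (Lemma \ref{lem:manifold}), which your sketch leaves implicit.
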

A weakly orderable compact hyperbolic Coxeter $3$--orbifold $\hP$ is projectively deformable at $t$ if
$e_+(\hP) > 3$; otherwise, it is locally rigid at $t$.


A {\em truncation} at a vertex $v$ of a compact properly convex $n$--polytope $P$ is
an operation where 
\begin{itemize}
\item we take a hyperspace $H$ meeting only the interiors of sides of $P$  incident with $v$, and not $v$ itself,
\item take the component $C$ of $P- H$ containing $v$, and 
\item finally delete $C$. 
\end{itemize} 
An {\em iterated truncation} of $P$ is the operation of obtaining $P_n$ where
\[ P= P_0 \Rightarrow P_1 \Rightarrow \cdots \Rightarrow P_n\] 
and $P_{i+1}$ is obtained from $P_i$ by truncation at a vertex of $P_i$. 
A \emph{truncation $n$--polytope} is a convex $n$--polytope obtained from an $n$--simplex by iterated truncation.   

\begin{corollary}\label{cor:truncation}
Let $P$ be a compact hyperbolic Coxeter $n$--polytope and a truncation polytope. (Assume $n \geq 3$.)
Suppose that $\hP$ is the Coxeter orbifold arising from $P$.
Then a neighborhood of the hyperbolic point in $\De(\hP)$ is a cell of dimension $e_+(\hP) \! - n$.
\end{corollary}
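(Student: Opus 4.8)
The plan is to derive the statement from the truncation-polytope case of Theorem~\ref{thm:main1} by realizing $P$ as the final term of an iterated truncation $P_0 \Rightarrow P_1 \Rightarrow \cdots \Rightarrow P_N = P$ from an $n$-simplex $P_0$, and tracking the local deformation space along the chain. Each $P_i$ is again simple, and carries a hyperbolic Coxeter structure $\hP_i$; at intermediate stages the not-yet-truncated vertices may be hyperideal and are cut off by their polar walls at the next step, a situation that Vinberg's framework \cite{Vinberg67} and the scope of Theorem~\ref{thm:main1} accommodate. I would induct on $N$, the inductive claim being that a neighborhood of the hyperbolic point in $\De(\hP_i)$ is a cell of dimension $e_+(\hP_i) - n$, relating $\De(\hP_{i+1})$ to $\De(\hP_i)$ through the geometric truncation.

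For the inductive step, consider the truncation $\hP_i \Rightarrow \hP_{i+1}$ at a vertex $v$. Since $P_i$ is simple, $v$ lies on exactly $n$ facets $F_{j_1}, \dots, F_{j_n}$, whose link is a spherical (hence projectively rigid) Coxeter simplex; the truncation deletes $v$ and inserts one new facet $T$, an $(n-1)$-simplex meeting each $F_{j_k}$ in a new ridge, while leaving all earlier ridges and their orders intact. In Vinberg's description the holonomy is recorded by the walls $\alpha_i$, the reflection vectors $b_i$, and the Cartan entries $a_{ij} = \alpha_i(b_j)$, with $a_{ij}a_{ji}$ fixed to $4\cos^2(\pi/m_{ij})$ on each ridge of order $m_{ij}$. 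The new facet contributes one wall–vector pair $(\alpha_T, b_T)$, and the $n$ incidence conditions together with the order conditions on the new ridges absorb these unknowns so that the dimension increases by exactly the number of new ridges of order $\geq 3$: an order-$2$ ridge forces the corresponding off-diagonal Cartan entries to vanish and contributes nothing, whereas an order-$\geq 3$ ridge fixes only the product $a_{Tj}a_{jT}$ and leaves one free ratio. Since $e_+(\hP_{i+1}) - e_+(\hP_i)$ is this same count of new order-$\geq 3$ ridges, the formula $e_+ - n$ is preserved, and smoothness of the cell is inherited.

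The base case $N=0$ is a hyperbolic Coxeter $n$-simplex, for which one checks directly from Vinberg's description that $\De(\hP_0)$ is a cell of dimension $e_+(\hP_0) - n$: the $e_+(\hP_0)$ free ratios are cut down by the $n$-dimensional normalization of the $(\alpha_i, b_i)$ data, and no obstruction arises for the simplex group. The main obstacle is the inductive step, and specifically the claim that the local deformation space remains a \emph{smooth} cell of exactly the predicted dimension under a single truncation. This amounts to showing that $\alpha_T$ can always be positioned, and $b_T$ chosen, so as to realize the prescribed angles at the $n$ new ridges with no leftover constraint and no obstruction — equivalently, that the restriction map from the deformations of $\hP_{i+1}$ to the deformations of the locally rigid link of $v$ is surjective with kernel of the expected dimension. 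Establishing this transversality, which is precisely the technical content packaged into the truncation-polytope case of Theorem~\ref{thm:main1}, is where the real work lies; the dimension bookkeeping above is then routine.
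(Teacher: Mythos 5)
Your proposal does not follow the paper's route, and as it stands it has a genuine gap. The paper proves this corollary in three lines by verifying the two hypotheses of Theorem~\ref{thm:main1}: condition $(C1)$ holds because, for $n\geq 4$, Br{\o}ndsted's characterization says $P$ is a truncation polytope if and only if $\delta_P=0$ (and for $n=3$ every simple polytope has $\delta_P=0$); condition $(C2)$ holds by Lemma~\ref{lem:trunc}, an induction showing that weak orderability is preserved under a single truncation --- one labels the new facet lowest, notes it is an $(n-1)$--simplex meeting exactly $n$ facets that were previously concurrent at the truncated vertex, hence automatically in general position. Your proposal never verifies either $(C1)$ or $(C2)$; instead you try to induct on the deformation spaces $\De(\hP_i)$ themselves along the truncation chain. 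That is a much harder (and here circular) route: you concede at the end that the transversality needed for the inductive step "is precisely the technical content packaged into the truncation-polytope case of Theorem~\ref{thm:main1}," but Theorem~\ref{thm:main1} has no such case --- its hypotheses are exactly $(C1)$ and $(C2)$, and verifying them for truncation polytopes is the entire content of the corollary. So the one step you defer is the one step you were asked to supply.

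There is also a structural problem with the induction itself. The corollary assumes only that the \emph{final} polytope $P=P_N$ is a compact hyperbolic Coxeter polytope; the intermediate $P_i$ are purely combinatorial truncation polytopes and need not carry compact hyperbolic Coxeter structures at all (your own remark about hyperideal vertices concedes this). The paper's entire deformation-space machinery --- Theorem~\ref{thm:defrepspaces}, the solution-space description, Proposition~\ref{prop:ident}, and Theorem~\ref{thm:main1} --- is built for \emph{compact} hyperbolic Coxeter orbifolds, so the inductive hypothesis "a neighborhood of the hyperbolic point in $\De(\hP_i)$ is a cell of dimension $e_+(\hP_i)-n$" is not even well-posed at intermediate stages within this framework. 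The fix is to abandon the induction on deformation spaces and instead induct only on the \emph{combinatorial} property of weak orderability (which is what Lemma~\ref{lem:trunc} does), then apply Theorem~\ref{thm:main1} once, to the final compact polytope, after checking $\delta_P=0$ via Br{\o}ndsted.
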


Earlier Marquis \cite{Marquis10} used the word \emph{ecimahedron} in place of \emph{truncation $3$--polytope}
and showed that if $\hP$ is the Coxeter $3$--orbifold arising from a compact hyperbolic Coxeter ecimahedron $P$,
then $\De(\hP)$ is diffeomorphic to $\mathbb{R}^{e_+(\hP)\!-3}$.
(For $n=2$, Goldman proved the result in his senior thesis.)

There is a recent thesis work done by Ryan Greene \cite{greene}
obtaining similar results using algebraic topological methods. Also, 
Theorem 1 of Kapovich \cite{Kapovich94} is an analogous result for the flat conformal structures. 

Our main results, Corollaries \ref{cor:main1} and \ref{cor:truncation}, will follow from Theorem \ref{thm:main1}
to be stated and proved in  Section \ref{subs:proofs}, generalizing the notion of the weak orderability.



Almost all the compact hyperbolic $3$--orbifolds arising from some $3$--polytopes are weakly orderable.
To describe this in more detail, we introduce the following terminology. 
An {\em abstract $3$--polyhedron} is a cell complex homeomorphic to a compact $3$--ball with conditions that 
there exists a unique $3$--cell, every $1$--cell belongs to exactly two $2$--cells, a nonempty intersection of 
two $2$-cells is a vertex or a $1$--cell, and every $2$--cell contains no fewer than three $1$--cells. 
(See \cite{Roeder07}). It is {\em simple} if each vertex is contained in three $1$--cells. 
The side structure of a properly convex $3$--polytope $P$ gives it
the structure of an abstract $3$--polytope whose $k$--cells correspond to the $k$--faces of $P$ for
$k=0,1,2,3$. 
The \emph{boundary complex} $\partial P$ of an abstract $3$--polyhedron $P$ is the subcomplex of $P$ consisting of all proper cells.
Let $(\partial P)^*$ be the dual complex of $\partial P$.
A simple closed curve $\beta$ is called a \emph{$k$--circuit} if it consists of $k$ edges of $(\partial P)^*$ for some positive integer $k$.
A circuit $\beta$ is \emph{prismatic} if all endpoints of the edges of $\partial P$ intersected by $\beta$ are distinct.

\begin{theorem}\label{thm:probability}
Let $P$ be a compact properly convex simple $3$--polytope but not a tetrahedron. 
Suppose that $P$ has no prismatic $3$--circuit, and has at most one prismatic $4$--circuit.
Let $H_d(P)$ be the number of compact hyperbolic Coxeter $3$--orbifolds whose base polytopes are combinatorially
equivalent to $P$ and the maximal edge orders are $\leq d$, 
and let $WO_d(P)$ denote the number of weakly orderable ones among them. 
Then
\[ \lim_{d \rightarrow \infty} \frac{ WO_d(P)}{H_d(P)}  =  1.\]
\end{theorem}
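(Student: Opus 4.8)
The plan is to combine Andreev's theorem with a counting argument controlled by the vertex conditions, the decisive point being that the hypothesis of having no prismatic $3$--circuit forces the relevant order--$2$ graph to be triangle--free.

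First I would recall Andreev's theorem (see \cite{Roeder07}): since $P$ is simple and not a tetrahedron, an assignment of an order $n_e \geq 2$ to each edge $e$ yields a compact hyperbolic Coxeter $3$--orbifold exactly when the dihedral angles $\pi/n_e$ satisfy Andreev's inequalities. Under our hypotheses these reduce to the vertex condition $1/n_i + 1/n_j + 1/n_k > 1$ at each vertex (whose three edges have orders $n_i,n_j,n_k$) together with the single inequality $\sum 1/n_e < 2$ coming from the at most one prismatic $4$--circuit; the prismatic $3$--circuit condition is vacuous. Thus $H_d(P)$ counts such labelings with all $n_e \leq d$. The elementary classification of spherical triples shows the only admissible vertex triples are $(2,2,c)$ with $c\geq 2$ arbitrary and the three exceptional triples $(2,3,3),(2,3,4),(2,3,5)$. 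In particular, if an edge has order $\geq 6$ then each of its endpoints is of type $(2,2,c)$, so its four neighboring edges all have order $2$; hence the edges of order $\geq 6$ form a matching in the $1$--skeleton $\mathcal G$ of $P$.

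Next I would pin down the leading asymptotics. Since $\mathcal G$ is cubic and $3$--connected it has a perfect matching, of size $V/2 = f-2$ (note $V = 2f-4$ is even), and by the at most one prismatic $4$--circuit hypothesis such a matching may be chosen to contain an edge of that circuit; labeling its edges freely in $\{6,\dots,d\}$ and all remaining edges by $2$ produces $\Theta(d^{f-2})$ valid labelings. Conversely every valid labeling has its order--$\geq 6$ edges forming a matching of size $\leq f-2$, and for a fixed such matching the remaining edges take boundedly many values; summing over the finitely many matchings gives $H_d(P) = \Theta(d^{f-2})$, with only $O(d^{f-3})$ labelings whose order--$\geq 6$ edges fail to form a perfect matching.

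The heart of the argument is to show that every labeling whose order--$\geq 6$ edges form a perfect matching $L$ is weakly orderable. Let $G_2$ be the graph on the faces of $P$ in which two faces are joined when they meet along an order--$2$ edge; then $\hP$ is weakly orderable precisely when $G_2$ is $3$--degenerate, since reversing a weak ordering gives a vertex ordering in which each face has at most three earlier $G_2$--neighbors, and conversely. For such a labeling every non--$L$ edge is forced to have order $2$, so $G_2$ is the dual graph $(\partial P)^*$ with the dual edges of $L$ removed. Because $L$ is a perfect matching, exactly one edge is deleted from each triangular face of $(\partial P)^*$, and because $P$ has no prismatic $3$--circuit every $3$--cycle of $(\partial P)^*$ bounds such a face; hence $G_2$ is triangle--free. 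Being triangle--free and planar, every subgraph $H$ with at least three vertices satisfies $|E(H)| \leq 2|V(H)| - 4$, so every subgraph has a vertex of degree $\leq 3$, i.e.\ $G_2$ is $3$--degenerate and $\hP$ is weakly orderable. Combining this with the counting, every non--weakly--orderable valid labeling has fewer than $f-2$ edges of order $\geq 6$, so there are at most $O(d^{f-3})$ of them while $H_d(P) = \Theta(d^{f-2})$; therefore $WO_d(P)/H_d(P) = 1 - O(1/d) \to 1$. I expect the main obstacle to be precisely the fourth paragraph: recognizing that the matching structure forced by the vertex conditions, together with the absence of prismatic $3$--circuits, is exactly what makes $G_2$ triangle--free and hence $3$--degenerate; once that is in hand the remainder is bookkeeping in the powers of $d$.
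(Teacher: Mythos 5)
Your proposal is correct, and its overall architecture coincides with the paper's: Tutte's theorem produces a perfect matching through an edge of the prismatic $4$--circuit, Andreev's theorem realizes the corresponding labelings, and the count stratified by the number of high-order edges shows that the perfect-matching stratum dominates as $d \to \infty$, so it suffices that every labeling in that stratum be weakly orderable. Where you genuinely diverge is in the proof of that last key lemma. The paper's Lemma \ref{lem:weakly} (every vertex incident with two order-$2$ edges and one edge of higher order implies weak orderability) is proved by induction on the number of faces, using the Fouquet--Thuillier result (Theorem \ref{thm:exmain}) that every cycle of a cubic $3$--connected planar graph contains a removable edge, together with a label-reversal trick along a face; notably, that lemma needs no hypothesis on prismatic circuits. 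Your substitute -- the order-$2$ adjacency graph $G_2$ is a spanning subgraph of $(\partial P)^*$ missing one edge from each triangular face because the matching is perfect, hence triangle-free since the absence of prismatic $3$--circuits makes every $3$--cycle of $(\partial P)^*$ face-bounding, hence $3$--degenerate by the Euler bound $|E|\leq 2|V|-4$ for triangle-free planar graphs, which is exactly weak orderability after reversing the ordering -- is more elementary and self-contained, at the price of consuming the prismatic-$3$--circuit hypothesis inside the lemma rather than only in the application of Andreev's theorem. Both routes give the same asymptotics; your threshold of $6$ in place of the paper's $7$ for ``high order'' is immaterial, since either forces both endpoints of such an edge to have vertex type $(2,2,c)$ and hence forces the high-order edges to form a matching.
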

In particular, there exist infinitely many weakly orderable hyperbolic Coxeter $3$--orbifolds with base polytopes of type $P$ as above. 

\begin{question}
Does the conclusion of Theorem \ref{thm:probability} still hold if we assume only that $P$ is a convex simple $3$--polytope?
\end{question}


Section \ref{s:preliminary} reviews some facts.
In Section \ref{subs:ele} we review some elementary facts on orbifolds and real projective structures.
In Section \ref{subs:Vinberg} we describe needed Vinberg's results on the general properties satisfied by real projective reflection groups.
We turn Vinberg's ``semialgebraic'' conditions (L1) and (L2) into the ``algebraic'' conditions (L1) and (L2)$'$.

Section \ref{s:DeformationSpaces} gives various descriptions of the deformation space of real projective structures on a Coxeter orbifold $\hP$.
In Section \ref{subs:RepresentationSpaces} we introduce a space of representations of the fundamental group $\pi_1(\hP)$ and
identify this representation space with the deformation space of real projective structures.
In Section \ref{subs:VinbergEq} we introduce the solution space of some polynomial equations,
a space of matrices satisfying certain conditions. 
We establish the equivalence of these spaces following Vinberg \cite{Vinberg71}.

Section \ref{s:LocalSpaces} discusses the results on a neighborhood of the hyperbolic structure in the deformation space of real projective structures on a compact Coxeter $n$--orbifold $\hP$.
In Section \ref{subs:TangentSpace} we study the Zariski tangent space of the solution space of polynomial equations giving real projective structures
on $\hP$. 
 In Section \ref{subs:HyperbolicEquations} we introduce the polynomial equations defining hyperbolic structures on $\hP$, and in Section \ref{subs:TangentToHyperbolic} we describe the Zariski tangent space of  
the solution space of these polynomial equations. 
In Section \ref{subs:Maintheorem} we state Theorem \ref{thm:main1}, the main result of the paper. 
In Section \ref{subs:MainLemma} we compare the two Zariski tangent spaces at a hyperbolic point,
and in Section \ref{subs:Proof} we combine this with the Weil infinitesimal rigidity to prove Theorem \ref{thm:main1}.

Section \ref{s:Examples} provides several examples. In Section \ref{subs:Examples} we use two combinatorial results of Tutte and Fouquet--Thuillier to prove Theorem \ref{thm:probability}. In Section \ref{subs:C2} and Section \ref{subs:C1} we give examples satisfying only one of the two assumptions in Theorem \ref{thm:main1}.

\subsection*{Acknowledgements}
This work has benefitted from discussions and correspondences with Y Benoist, M Davis, W M Goldman, R Greene, 
O Guichard, S Lim, L Marquis and A Wienhard 
as well as many others. We especially wish to thank C D Hodgson for his advice and guidance to G-S Lee during our visit to
the Department of Mathematics and Statistics at the University of Melbourne. 
We are also grateful to the referee for carefully reading this paper and suggesting several improvements including Question \ref{q:ref}. 
Finally we express our gratitude to B Chriestenson for comments on our English.

S Choi was supported by the Mid--career Researcher Program through the NRF grant funded by the MEST (No. R01-2008-000-10286-0).
G-S Lee was supported by the National Research Foundation of Korea (NRF) grant funded by the Korea government (MEST) (No. 2011-0027952), the Basic Science Research Program through the NRF grant funded by the MEST (No. 2011-0004850), 
the Fondation Sciences Math\'{e}matiques de Paris (FSMP--IHP Program: Geometry and analysis of surface group representations)
 and the DFG research grant \lq\lq Higher Teichm\"{u}ller Theory\rq\rq.

\section{Preliminary}
\label{s:preliminary}

This section reviews the basic background material used in this article.
In Section \ref{subs:ele}, we review some basic materials on orbifolds and geometric structures
which are also given in Chapters 3, 4, and 6 of \cite{Cbook} in detail.
In Section \ref{subs:Vinberg} we describe Vinberg's results. 

\subsection{Notation}\label{subs:ele}

An \emph{orbifold} $Q$ is a second countable Hausdorff space $|Q|$ with an \emph{orbifold structure}, ie, 
 a covering of $|Q|$ by a collection of 
open sets of form $\phi(U)$ for a model $(U, H, \phi)$ where $U$ is an open subset of $\R^n$ with a finite group $H$ acting on it effectively
and $\phi$ induces a homeomorphism $U/H \ra \phi(U)$. 
We require that these models are compatible with one another in the standard way. 
A model $(U, H, \phi)$ is also called a {\em chart} of $Q$. 
A \emph{singular point} is a point $x$ of $Q$ where $H$ is not trivial for every choice of a chart $(U, H, \phi)$ containing $x$.


An orbifold $Q_1$ {\em covers} an orbifold $Q_2$ by a {\em covering map} $p$ if each point of $Q_2$ has
a connected open neighborhood $\phi_2(U_2)$ with the model $(U_2, H_2, \phi_2)$ 
such that each component $\phi_1(U_1)$ of the inverse image $U'_1 =p^{-1}(\phi_2(U_2))$ has model $(U_1, H_1, \phi_1)$ where
\begin{alignat*}{3}
U_1   & \quad \stackrel{\phi_1}{\ra}  & \quad \phi_1(U_1) \\ 
\hat p \downarrow &    & \quad p \downarrow \\ 
U_2  & \quad \stackrel{\phi_2}{\ra}  & \quad \phi_2(U_2) 
\end{alignat*}
is commutative
for a diffeomorphism $\hat p$ equivariant with respect to an injective homomorphism $H_1 \ra H_2$. 

For an orbifold $Q$, we denote by $|Q|$ the base space of $Q$.
Two orbifolds $Q_1$ and $Q_2$ are {\em diffeomorphic} if a homeomorphism $f\co  |Q_1| \ra |Q_2|$
lifts to a smooth embedding for each choice of local model. 

A {\em good orbifold} is an orbifold $Q$ that is covered by a manifold.
It has a simply connected covering manifold $\widetilde Q$ called a {\em universal cover}
with the covering map $p_Q$.
The group of diffeomorphisms $f\co  \widetilde Q \ra \widetilde Q$ so that $p_Q \circ f = p_Q$
is said to be the {\em deck transformation group} and is denoted by $\pi_1(Q)$.
The base space of $Q$ is homeomorphic to the quotient space $\widetilde Q/\pi_1(Q)$.

Conversely, given a simply connected manifold $M$ and a discrete group $\Gamma$ acting on it
properly discontinuously (but not necessarily freely), $M/\Gamma$ has a natural structure of
an orbifold. 


A {\em geodesic} in $\mathbb{RP}^n$ is a connected arc  in a $1$--dimensional subspace.
A {\em geodesic} in $\PS$ is a connected arc in a $1$--dimensional great circle in $\PS$, which
is a lift of a geodesic of $\mathbb{RP}^n$. An {\em affine space} $A^n$ is $\mathbb{R}^n$ 
equipped with the affine transformation group acting on it. 
The complement of a codimension-one subspace in $\mathbb{RP}^n$ can be identified
with an affine space $A^n$. The group of projective transformations 
acting on $A^n$ is the affine transformation group. Moreover, the geodesics in $\PS$ 
restricts to geodesics in $A^n$.  

An open hemisphere in $\PS$ is identifiable with an affine space 
under the double--covering map $\PS \ra \mathbb{RP}^n$.
An open hemisphere is said to be an {\em affine subspace} of $\PS$.
A {\em polytope} is a compact properly convex domain in an affine subspace with finitely many sides.
(For these, the ordinary theory of convex domains in the Euclidean space applies.)



For a Lie group $G$ acting  transitively on a smooth manifold $X$,
a \emph{$(G, X)$--structure} on an $n$-dimensional orbifold $Q$ is 
\begin{itemize}
\item a maximal atlas of charts of form $(U, H, \phi)$ where $U$ is an open subset of $X$ and
$H$ is a finite subgroup of $G$ acting on $U$,
where 
\item every inclusion map $\iota\co  \psi(V) \ra \phi(U)$ for charts $(V, J, \psi)$ and $(U, H, \phi)$ lifts to 
a map $k|V: V \ra U$ for $k \in G$ equivariant with respect to a homomorphism $J \ra H$, and 
\item  each point $x \in Q$ is in $\phi(U)$ where $(U, H, \phi)$ is in the maximal atlas of the orbifold structure of $Q$
where $U \subset X$ is identified with an open set in $\R^n$ by a smooth map.  (a compatibility condition) 
\end{itemize}
The existence of $(G, X)$--structure implies that $Q$ is good. 

Let $\hP$ denote a compact real projective $n$--orbifold.
Let $\widetilde{P}$ denote the universal cover of $\hP$ and let $\pi_1(\hP)$ denote the group of
deck transformations.
A real projective structure on $\hP$ gives us an immersion $D\co \widetilde{P} \ra \PS$, called a {\em developing map}, and
a homomorphism $h\co \pi_1(\hP) \ra \SL$, called a {\em holonomy homomorphism}, so that
$ D \circ \gamma = h(\gamma) \circ D$, for each $\gamma \in \pi_1(\hP)$.
Here, $(D,h)$ is determined only up to the following action:
\[ (D,h(\cdot)) \longmapsto (g\circ D, g\circ h(\cdot)\circ g^{-1}) \quad \textrm{for } g \in \SL.\]
Conversely, the development pair $(D, h)$ determines the real projective structure.
(See Choi \cite{Choi04,Choi06, Cbook} and Thurston \cite{Thurston80,Thurston97} for the detail.)

Note the double-covering map $\PS \ra \mathbb{RP}^n$ where the group $\PGL$ acts on $\mathbb{RP}^n$ transitively.
We can equivalently define a real projective structure as a $(G, X)$--structure with
$X = \mathbb{RP}^n, G = \PGL$ since $\SL$ is precisely the group consisting of
automorphisms of $\PS$ lifting elements of $\PGL$.
(See page 143 in \cite{Thurston97} and \cite{cdcr1}.)

\subsection{Vinberg's results}
\label{subs:Vinberg}

This subsection gives a summary of the ground-breaking article of Vinberg \cite{Vinberg71}.
(See also Benoist \cite{Benoist09}.)
The English translated terminology of Vinberg is slightly different from the current ones.
For example the term ``strictly convex'' is now replaced by ``properly convex.''

Let $V$ be an $(n+1)$--dimensional real vector space and let $V^*$ its dual vector space.

Let $O$ denote the origin of $V$.
A {\em cone} $C$ in $V$ is a subset of $V$ where $O \in C$ so that if any point $v \in V$ is in $C$, then $s v \in C$ for each $s > 0$.
In our terms, the definition of {\em convex cone} in \cite{Vinberg71} is the following:
A cone $C$ in $V$ is a {\em convex cone}  if $\Pi(C - \{O\})$ is a convex set in $\PS$.

A \emph{reflection} $R$ is an element of order $2$ of $\SL$ which is the identity on a hyperplane of $V$.
All reflections are of the form
\begin{equation*}
R = I_V - \alpha \otimes b
\end{equation*}
for some linear functional $\alpha \in V^*$ and a vector $b \in V$ with $\alpha(b)=2$, and are in $\SL$.
Observe that the kernel of $\alpha$ is the subspace $U$ of fixed points of $R$ and $b$ is
the \emph{reflection vector}, ie an eigenvector corresponding to the eigenvalue $-1$.
Hence a reflection has a subspace of codimension-one in $\PS$ as the set of fixed points and the point corresponding 
to the reflection vector is sent to its antipode, called the {\em antipodal fixed point}.

A \emph{rotation} is an element of $\SL$ which restricts to the identity on
a subspace of codimension-two and acts on the complementary space with a matrix
$\left[ \begin{smallmatrix} \cos\theta & -\sin\theta \\
\sin\theta & \;\;\; \cos\theta \end{smallmatrix} \right]$
with respect to some basis. The real number $\theta$ is the \emph{angle} of the rotation.

As a matter of notation, given a convex $n$--polytope $P$ in $\PS$,
$\mathrm{cone}(P)$ will denote the convex polyhedral cone $\Pi^{-1}(P)\cup \{O\}$ in $V$.

Let $P$ be a properly convex $n$--polytope in $\PS$ with sides $F_1, \dots, F_f$ of codimension-one,
and for each facet $F_i$ of $P$,
take a linear functional $\alpha_i$ for $F_i$ and choose a reflection $R_i=I_V-\alpha_i \otimes b_i$ with $\alpha_i(b_i)=2$ which fixes $F_i$.
By making a suitable choice of signs, we assume that  $P$ is defined by the following inequalities
\begin{equation*}
\alpha_i \geq 0, \hspace{10mm} i \in \I=\{1, \ldots, f\}.
\end{equation*}
The group $\Gamma \subset \SL$ generated by all these reflections $R_i$ is called a (\emph{real}) \emph{projective Coxeter group} if
\begin{equation*}
\gamma \mathring{P} \cap \mathring{P} = \emptyset \:\;\:\text{for every}\: \gamma \in \Gamma \backslash \{1\}
\end{equation*}
where $\mathring{P}$ is the interior of $P$. Note that Vinberg \cite{Vinberg71}
used the word \emph{linear Coxeter group} in place of \emph{projective Coxeter group}.
The $f \times f$ matrix $A =(a_{ij})$, $a_{ij}=\alpha_i(b_j)$, is called the \emph{Cartan matrix} of the group $\Gamma$ and
$P$ is called a \emph{fundamental chamber} of $\Gamma$.
For $x \in P$, let $\Gamma_x$ denote the subgroup of $\Gamma$ generated by $\{ R_i |\, x \in F_i \}$.
Define $P^s = \{ x\in P |\,  \Gamma_x \textrm{ is finite }\}$.


\begin{theorem}{\rm \cite[Theorem 1 and Propositions 6 and 17]{Vinberg71}} \label{thm:vinbergM}
The following conditions are necessary and sufficient for any group $\Gamma$,
generated by the reflections $R_1, \dots, R_f$ fixing respectively facets $F_1, \dots, F_f$ 
of the properly convex $n$--polytope $P$, 
to be a projective Coxeter group\,{\rm :}
\begin{enumerate}\label{ConditionL}
\item[{\rm (L1)}] $a_{ij} \leq 0$ for $i \neq j$, and $a_{ij}=0$ if and only if $a_{ji}=0$.
\item[{\rm (L2)}] $a_{ii}=2$; and for $i, j$ with $i\ne j$,
\begin{itemize}
\item[{\rm (i)}]  if $F_i$ and $F_j$ are adjacent, ie meet in a ridge, then $a_{ij}a_{ji}=4 \cos^2 \left(\frac{\pi}{n_{ij}}\right) < 4$ for 
an integer $n_{ij} \geq 2$, or  
\item[{\rm (ii)}] else  $a_{ij}a_{ji}\geq 4$. 
\end{itemize}
\end{enumerate}
\end{theorem}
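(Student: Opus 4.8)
This is Vinberg's theorem, so the plan is to treat necessity and sufficiency separately, localizing all of the geometry to the two--dimensional plane transverse to each ridge. The computational heart is the restriction of $R_iR_j$ to the plane $\langle b_i, b_j\rangle$: since $R_i(b_i)=-b_i$, $R_i(b_j)=b_j-a_{ij}b_i$, and symmetrically for $R_j$, one finds in the basis $(b_i,b_j)$
\[ R_i = \begin{pmatrix} -1 & -a_{ij} \\ 0 & 1 \end{pmatrix}, \quad R_j = \begin{pmatrix} 1 & 0 \\ -a_{ji} & -1 \end{pmatrix}, \quad R_iR_j = \begin{pmatrix} a_{ij}a_{ji}-1 & a_{ij} \\ -a_{ji} & -1 \end{pmatrix}, \]
so $\det(R_iR_j)=1$ and $\mathrm{tr}(R_iR_j)=a_{ij}a_{ji}-2$. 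Hence $R_iR_j$ is conjugate to a rotation precisely when $a_{ij}a_{ji}<4$, with angle $\theta$ determined by $2\cos\theta=a_{ij}a_{ji}-2$, equivalently $a_{ij}a_{ji}=4\cos^2(\theta/2)$; it is parabolic when $a_{ij}a_{ji}=4$ and has a pair of real reciprocal eigenvalues, hence infinite order, when $a_{ij}a_{ji}>4$. This single computation drives both directions.

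For \emph{necessity} I would suppose $\Gamma$ is a projective Coxeter group and argue locally. Near an interior point of a ridge $F_i\cap F_j$, the tiling condition $\gamma\mathring{P}\cap\mathring{P}=\emp$ forces $\langle R_i,R_j\rangle$ to tile the transverse plane as a finite reflection group, hence to be finite dihedral of some order $n_{ij}\ge 2$; then $R_iR_j$ is a rotation of order $n_{ij}$, so $\theta=2\pi/n_{ij}$ and the displayed relation yields $a_{ij}a_{ji}=4\cos^2(\pi/n_{ij})<4$, which is (L2)(i). If $F_i$ and $F_j$ are \emph{not} adjacent, a finite--order rotation $R_iR_j$ would force an accumulation of chambers incompatible with the tiling, so $R_iR_j$ must have infinite order, giving $a_{ij}a_{ji}\ge 4$, which is (L2)(ii). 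The normalization $a_{ii}=\alpha_i(b_i)=2$ is built in; the sign conditions in (L1) I would derive from the fact that each $R_i$ carries $\mathring{P}$ across $F_i$ into $\alpha_i<0$ while the wedge $\{\alpha_i\ge 0,\ \alpha_j\ge 0\}$ is the local fundamental chamber, so a positive off--diagonal entry would fold $P$ onto itself, and $a_{ij}a_{ji}=4\cos^2(\pi/n_{ij})$ forces $a_{ij}$ and $a_{ji}$ to vanish together, which is the biconditional in (L1).

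The substantial direction is \emph{sufficiency}. Assuming (L1) and (L2), I would first invoke (L2)(i) to check that each adjacent pair generates a finite dihedral group of order $2n_{ij}$, so that $R_i\mapsto s_i$ defines a surjection from the abstract Coxeter group $W=\langle s_i \mid (s_is_j)^{n_{ij}}\rangle$ onto $\Gamma$. Next I would develop the tiling $\bigcup_{\gamma\in\Gamma}\gamma P$ in $\PS$, equivalently the cone $\bigcup_{\gamma}\gamma\,\mathrm{cone}(P)$ in $V$, propagating chamber by chamber along galleries: the finite dihedral picture at each ridge makes the development locally embedded around every codimension--two face, while the non--adjacency bound $a_{ij}a_{ji}\ge 4$ eliminates the spurious finite relations that would collapse galleries, so that $W\to\Gamma$ is in fact injective. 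The crux, and the step I expect to be the main obstacle, is globalizing this local consistency to genuine disjointness $\gamma\mathring{P}\cap\mathring{P}=\emp$ for all $\gamma\neq 1$: unlike the hyperbolic case there is no invariant metric with which to apply a Poincar\'e polyhedron theorem, so one must run Vinberg's induction on the word length in $W$, using the convexity of $P$ together with the contragredient action on $V^*$ to show simultaneously that the union of chambers is convex and that $P$ is a strict fundamental domain. Controlling this globally---ruling out chambers wrapping around in $\PS$ and overlapping---is precisely where the projective, rather than Euclidean or hyperbolic, nature of the problem makes the argument delicate.
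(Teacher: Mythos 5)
The paper's own ``proof'' of this theorem is a two-line citation: necessity is Proposition 17 of Vinberg, sufficiency is Propositions 6--7 and Theorem 1 of Vinberg. So you have attempted considerably more than the paper does, and your sketch is faithful to the architecture of Vinberg's actual argument. Your $2\times 2$ computation in the basis $(b_i,b_j)$ is correct ($\det(R_iR_j)=1$, $\mathrm{tr}(R_iR_j)=a_{ij}a_{ji}-2$), and it is indeed the computational core of both directions.

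Two caveats. First, in the necessity argument for non-adjacent facets there is a logical slip: you conclude that $R_iR_j$ ``must have infinite order, giving $a_{ij}a_{ji}\geq 4$.'' Infinite order does not give $a_{ij}a_{ji}\geq 4$: when $0<a_{ij}a_{ji}<4$ and the rotation angle is an irrational multiple of $\pi$, the element $R_iR_j$ already has infinite order. What the tiling condition actually excludes is $R_iR_j$ being elliptic on the plane $\langle b_i,b_j\rangle$ at all, since the images of the wedge $\{\alpha_i\geq 0,\ \alpha_j\geq 0\}$ under an elliptic rotation (of finite order or not) sweep out the whole circle of directions and must meet $\mathring{P}$; non-ellipticity is exactly $a_{ij}a_{ji}\geq 4$. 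A similar remark applies to your derivation of the biconditional in (L1): $a_{ij}a_{ji}=4\cos^2(\pi/n_{ij})$ being zero does not by itself force both factors to vanish; what forces it is that if exactly one of $a_{ij},a_{ji}$ vanishes then $R_iR_j$ restricts on that plane to $-1$ times a nontrivial unipotent, hence has infinite order, contradicting finiteness of the local dihedral group. Second, the sufficiency direction --- that (L1) and (L2) imply $\gamma\mathring{P}\cap\mathring{P}=\emp$ for all $\gamma\neq 1$ --- is only described, not proved: the induction on word length in the abstract Coxeter group (Vinberg's Theorem 1, a Tits-style argument showing $\ell(\sigma r_i)>\ell(\sigma)$ if and only if $\sigma P$ lies in $\{\alpha_i\geq 0\}$) is named but not carried out. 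Relative to the paper, which outsources exactly this step to Vinberg, that is acceptable; but your proposal should not be mistaken for a self-contained proof of the hard direction.
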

\begin{proof}
Proposition 17 of \cite{Vinberg71} gives the necessity of the conditions (L1) and (L2).

Given (L1) and (L2), Proposition 7 and Theorem 1 of \cite{Vinberg71} show that
$\Gamma$ is a projective Coxeter group with the fundamental chamber $P$.
(See also \cite[Theorem 1.5]{Benoist09}.) 
\end{proof}

In fact, if $a_{ij}a_{ji}=4 \cos^2\left(\frac{\pi}{n_{ij}}\right)$, then the product $R_i R_j$ is a rotation of angle $\frac{2\pi}{n_{ij}}$
and the group generated by the two reflections $R_i$ and $R_j$ is the dihedral group $D_{n_{ij}}$.
In particular, if $a_{ij} = a_{ji} = 0$ then $R_iR_j$ is a rotation of angle $\tfrac{\pi}{2}$ and
$R_i$ and $R_j$ generate a dihedral group of order $4$, ie a Klein four group.
If $a_{ij}a_{ji} \geq 4$ then $R_i$ and $R_j$ would generate an infinite group and $n_{ij} = +\infty$.
(See \cite[Section 2]{Vinberg71}.)

The group generated by $R_1, \dots, R_f$ is isomorphic to a Coxeter group, and is also called
the {\em projective reflection group} generated by $R_1, \dots, R_f$.

For each reflection $R_i$,  $\alpha_i$ and $b_i$ are determined up to a positive scalar by: 
\begin{equation}\label{eqn:diagonal}
\alpha_i \mapsto d_i\alpha_i \;\; \text{and} \;\; b_i \mapsto d_i^{-1}b_i  \text{ with } d_i  > 0.
\end{equation}
Hence the Cartan matrix $A$ of $\Gamma$ is determined up to the conjugation action of a group of diagonal $f\times f$-matrices
with positive diagonal entries.


\begin{theorem}{\rm \cite[Theorem 2]{Vinberg71}}\qua \label{thm:vinberg0}
Let $\Gamma$ be a projective Coxeter group and $P$ its fundamental chamber.
The subset $\{x \in P |\,  \Gamma_x \text{ is finite} \}$ is denoted by $P^s$. Then the following statements hold\/{\rm :}
\begin{itemize}
\item $\Omega_\Gamma=\cup_{\gamma \in \Gamma} \gamma P$ is convex.
\item $\Gamma$ is a discrete subgroup of $\SL$ preserving 
the interior $\mathring{\Omega}_\Gamma$ of $\Omega_\Gamma$.
\item $\mathring{\Omega}_\Gamma \cap P = P^s$, and is homeomorphic to $\mathring{\Omega}_\Gamma / \Gamma$.
\end{itemize}
\end{theorem}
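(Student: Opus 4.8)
The plan is to follow Vinberg's strategy of building the tiling $\{\gamma P : \gamma \in \Gamma\}$ chamber by chamber and reading off discreteness and convexity from its local and combinatorial structure. The first task is the \textbf{local picture near $P^s$}. For $x \in P^s$ the group $\Gamma_x$ generated by $\{R_i : x \in F_i\}$ is by hypothesis finite; a finite subgroup of $\SL$ preserves a positive definite form, so $\Gamma_x$ is conjugate into an orthogonal group and is therefore a classical finite Coxeter reflection group acting on a transverse slice to the stratum through $x$. This gives, in a neighborhood $U_x$ of $x$, that the chambers $\{\gamma P : \gamma \in \Gamma_x\}$ have mutually disjoint interiors (from Theorem~\ref{thm:vinbergM}, $\gamma \mathring{P} \cap \mathring{P} = \emp$) and that their union fills $U_x$; that is, the chambers close up around $x$ exactly because $\Gamma_x$ is finite. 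In particular $U_x \subset \mathring{\Omega}_\Gamma$, each point of $\mathring{\Omega}_\Gamma$ meets only finitely many chambers (local finiteness), and the proper discontinuity and discreteness of $\Gamma$ on $\mathring{\Omega}_\Gamma$ follow at once.

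Next I would set up the \textbf{combinatorial bookkeeping}. Identify $\Gamma$ with its abstract Coxeter group $W$ via $R_i \mapsto s_i$, use the word length $\ell$, and record for each $i$ the wall $H_i = \{\alpha_i = 0\}$ together with the half-spaces $H_i^+ = \{\alpha_i \geq 0\} \supset P$ and $H_i^- = \{\alpha_i \leq 0\}$. The basic lemma to prove, by induction on $\ell(\gamma)$, is the sign/folding statement: $\gamma P \subset H_i^+$ if $\ell(s_i \gamma) > \ell(\gamma)$ and $\gamma P \subset H_i^-$ otherwise, together with the fact that consecutive chambers in a minimal gallery are glued along a single wall. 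This is the projective analogue of the positivity-of-roots criterion in the theory of Coxeter groups, and condition (L2) --- which forces $R_iR_j$ to be a rotation of angle $2\pi/n_{ij}$, hence of finite order, along each ridge --- is exactly what makes the dihedral subgroups finite, so that the induction has a valid base case along ridges.

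The heart of the argument, and the \textbf{main obstacle}, is \textbf{convexity of $\Omega_\Gamma$}. I would prove that any segment between two points of $\Omega_\Gamma$ stays inside it by a minimal-gallery straightening argument: reduce to points in $P$ and $\gamma P$, induct on $\ell(\gamma)$, and use the sign lemma to peel off one wall crossing at a time, invoking the local pictures of the first step to push the segment across the lower-dimensional strata where several chambers meet. The delicate point is that, unlike the Tits cone for linear Coxeter groups, here there is no $\Gamma$--invariant definite form supplying a metric; convexity must be extracted purely from the half-space combinatorics and the finiteness of the stratum groups $\Gamma_x$. The careful step is thus verifying that each shared wall is genuinely a supporting hyperplane of the locally developed union, and that a geodesic meeting the interior cannot escape $\Omega_\Gamma$.

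Finally, for the \textbf{third bullet}, I would show $\mathring{\Omega}_\Gamma \cap P = P^s$ from the local analysis: a point $x \in P$ lies in $\mathring{\Omega}_\Gamma$ iff the chambers close up around it, which happens iff $\Gamma_x$ is finite, i.e. $x \in P^s$, while points of $P$ with infinite stabilizer lie on $\partial \Omega_\Gamma$. The induced map $P^s \ra \mathring{\Omega}_\Gamma / \Gamma$ is then a continuous bijection (surjective because $P$ is a fundamental chamber, injective because distinct chambers have disjoint interiors), and it is a homeomorphism because the proper discontinuity established above makes the quotient map open and furnishes the required local models.
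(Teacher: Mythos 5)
The paper does not actually prove Theorem \ref{thm:vinberg0}: it is quoted directly from Vinberg \cite[Theorem 2]{Vinberg71}, so there is no in-paper argument to compare yours against. Measured against Vinberg's original proof, your outline identifies the correct skeleton: the word-length induction establishing the sign/folding lemma (each chamber $\gamma P$ lies entirely in $H_i^+$ or $H_i^-$ according to whether $\ell(R_i\gamma)$ increases or decreases), with the rank-two dihedral analysis supplied by condition (L2) as the base case, is indeed the engine of Vinberg's argument, and both the convexity of $\Omega_\Gamma$ and the identification $\mathring{\Omega}_\Gamma\cap P=P^s$ are extracted from it essentially as you describe. The local model at points of $P^s$ via finiteness of $\Gamma_x$ and the classical theory of finite reflection groups is also the right ingredient.

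That said, the two steps you yourself flag as delicate are announced rather than carried out, and they are where the real work lies. First, for convexity of $\Omega_\Gamma$, ``peel off one wall crossing at a time'' requires showing that a segment from a point of $P$ to a point of $\gamma P$ meets a wall of $P$ whose reflection strictly decreases $\ell(\gamma)$, and that the segment can be pushed across the lower-dimensional strata; this does not follow from the sign lemma alone and is a separate lemma in Vinberg's paper. Second, for $\mathring{\Omega}_\Gamma\cap P=P^s$ you establish only the easy inclusion $P^s\subset\mathring{\Omega}_\Gamma$; the converse --- that a point of $P$ with infinite $\Gamma_x$ cannot lie in the interior of the union --- is asserted without argument, and it is also needed earlier in your write-up, since the claim that every point of $\mathring{\Omega}_\Gamma$ meets only finitely many chambers presupposes $\mathring{\Omega}_\Gamma=\bigcup_{\gamma\in\Gamma}\gamma P^s$. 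This requires knowing that $\Gamma_x$ is the full stabilizer of $x$ in $\Gamma$ and that an infinite stabilizer forces chambers to accumulate at $x$, which in turn rests on the sign lemma. With those two gaps filled, your plan is a faithful reconstruction of the cited proof rather than an alternative to it.
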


An easy consequence of the theorem is that 
the group $\Gamma$ acts on $\mathring{\Omega}_\Gamma$ properly discontinuously.
Thus $\mathring{\Omega}_\Gamma$ gives a convex open subset of the projective sphere $\PS$, and
$\mathring{\Omega}_\Gamma/ \Gamma$ determines a \emph{convex} real projective structure
on the Coxeter $n$--orbifold  with the fundamental domain homeomorphic to $P^s$.
For example, let $P$ be a hyperbolic Coxeter $n$--polytope of finite volume.
Suppose that $\Gamma$ is the discrete group generated by the isometric reflections with respect to facets of $P$ in the hyperbolic space $\mathbb{H}^n$
in the Klein model in $\PS$. 
Then $\mathring{\Omega}_\Gamma=\mathbb{H}^n$ and $\mathring{\Omega}_\Gamma / \Gamma$ is a hyperbolic Coxeter $n$--orbifold.


A projective Coxeter group $\Gamma$ is \emph{elliptic}, \emph{parabolic} and \emph{hyperbolic}
if $\Gamma$ is conjugate to a discrete group generated by reflections in the sphere, the Euclidean space 
and the hyperbolic space respectively, provided that 
neither any proper plane in the hyperbolic space nor any point at infinity is $\Gamma$--invariant.

A Cartan matrix is  \emph{indecomposable} if it is not a direct sum of two matrices.
Thus every matrix $A$ decomposes into a direct sum of indecomposable matrices, which are  \emph{components} of $A$.
By Frobenius's theorem, any indecomposable matrix $A$ satisfying the condition (L1) has a real eigenvalue 
 (see Gantmacher \cite{Gantmacher59}).
An indecomposable Cartan matrix $A$ is \emph{positive}, \emph{zero} and \emph{negative type} if the smallest real eigenvalue is positive, zero and negative respectively.
Denote by $A^+$ (resp. $A^0$, $A^-$) the direct sum of its components of positive type (resp. zero type, negative type).
Any matrix $A$ satisfying the condition (L1) is the direct sum of $A^+$, $A^0$ and $A^-$.

\begin{proposition}{\rm \cite[Proposition 22]{Vinberg71}}\qua \label{prop:vinberg1}
Let $\Gamma$ be a projective Coxeter group with a properly convex $n$-dimensional fundamental chamber in $\PS$, and let $A$ be the Cartan matrix of $\Gamma$.
Then $\Gamma$ is elliptic if and only if $A=A^+$ if and only if $\Gamma$ is finite.
\end{proposition}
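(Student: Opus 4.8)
The plan is to establish the two stated equivalences in turn: first that $\Gamma$ is elliptic if and only if $\Gamma$ is finite, by a compactness argument combined with the discreteness supplied by Theorem~\ref{thm:vinberg0}; and then that $\Gamma$ is finite if and only if $A=A^+$, by translating between an invariant quadratic form and the spectral data of the Cartan matrix. Throughout I would use that, by Theorem~\ref{thm:vinbergM}, we have $a_{ii}=2$ and $a_{ij}\le 0$ with $a_{ij}=0\iff a_{ji}=0$ for $i\ne j$, and that the group generated by $R_1,\dots,R_f$ is isomorphic to the abstract Coxeter group with orders $n_{ij}$.

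For the equivalence of elliptic and finite, first I would argue $\Gamma$ elliptic $\Rightarrow$ finite: by definition some conjugate $g\Gamma g^{-1}$ lies in the isometry group $\mathbf{O}(n+1)$ of the round sphere, a compact subgroup of $\SL$; since $\Gamma$, hence $g\Gamma g^{-1}$, is discrete in $\SL$ by Theorem~\ref{thm:vinberg0}, and a discrete subgroup of a compact group is finite, $\Gamma$ is finite. Conversely, for finite $\Rightarrow$ elliptic I would average any inner product over $\Gamma$ to obtain a $\Gamma$--invariant positive definite form $B$ on $V$; a change of coordinates then carries $\Gamma$ into $\mathbf{O}(n+1)$, and each $R_i$, being an order--two element fixing a hyperplane, becomes an orthogonal reflection of $\PS$, so $\Gamma$ is elliptic.

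Next, for finite $\Rightarrow A=A^+$, I would use the invariant form $B$ from above: each $R_i$ is the $B$--orthogonal reflection in $b_i$, so $\alpha_i=\tfrac{2}{B(b_i,b_i)}B(b_i,\cdot)$ and $a_{ij}=\tfrac{2}{B(b_i,b_i)}B(b_i,b_j)$. First I would check the $b_i$ are linearly independent: the fixed space of $\Gamma$ is $\bigcap_i\ker\alpha_i$, so a nonzero fixed vector $w$ would give both rays $\R_{>0}(\pm w)\subset\mathrm{cone}(P)$, contradicting that $\mathrm{cone}(P)$ is pointed; hence $\Gamma$ acts essentially, its fundamental chamber is a simplicial cone, $f=n+1$, and $\{b_i\}$ is a basis. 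Then the Gram matrix $G=(B(b_i,b_j))$ is positive definite and $A=2\,\mathrm{diag}(G_{ii})^{-1}G$ is conjugate to the symmetric positive definite matrix $2\,\mathrm{diag}(G_{ii})^{-1/2}G\,\mathrm{diag}(G_{ii})^{-1/2}$, so every eigenvalue of $A$ is positive and in particular the smallest real eigenvalue of each indecomposable component is positive, that is, $A=A^+$.

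The crux, and the step I expect to be the main obstacle, is $A=A^+\Rightarrow\Gamma$ finite, since here Vinberg's framework allows $A$ to be non-symmetric. I would argue one indecomposable component at a time, writing $A=2I-N$ with $N\ge 0$ irreducible, so that positivity of type means $\rho(N)<2$ for the Perron root $\rho$ given by Frobenius's theorem. Letting $M$ be the symmetric matrix $M_{ij}=\sqrt{N_{ij}N_{ji}}$, the matrix $C:=2I-M$ has off--diagonal entries $-2\cos(\tfrac{\pi}{n_{ij}})$ and so is twice the Tits form of the Coxeter group. The key technical claim is $\rho(M)\le\rho(N)$: conjugating $N$ by the positive diagonal matrix that equalises its left and right Perron vectors yields $\tilde N$ with a common positive Perron vector $z$, whence $z$ is a positive eigenvector of the symmetric part $S=\tfrac12(\tilde N+\tilde N^{\top})\ge 0$ with eigenvalue $\rho(N)$, forcing $\rho(S)=\rho(N)$ by Perron--Frobenius, while the arithmetic--geometric mean inequality gives $0\le M\le S$ entrywise and hence $\rho(M)\le\rho(S)=\rho(N)$. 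Applying this to each component, $A=A^+$ gives $\rho(M)<2$, so $C$ is positive definite, the Tits form is positive definite, the classical criterion makes the abstract Coxeter group finite, and $\Gamma$ is finite because it is isomorphic to that group. This spectral comparison between the asymmetric Cartan matrix and its symmetrised Tits form is exactly what collapses Vinberg's deformed data back to the classical positive--definiteness criterion, and it is the delicate point of the argument.
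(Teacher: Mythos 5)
Your proof is correct, and it supplies an argument for a statement the paper does not prove at all --- Proposition \ref{prop:vinberg1} is simply quoted from Vinberg \cite[Proposition 22]{Vinberg71}, so there is no in-paper proof to match. What you have written is essentially a reconstruction of Vinberg's own strategy: the equivalence of ellipticity and finiteness via discreteness in the compact group $\mathbf{O}(n+1)$ in one direction and averaging an inner product in the other; the implication from finiteness to $A=A^+$ via the Gram matrix of the (necessarily independent, since $\bigcap_i\ker\alpha_i=0$ forces an essential action with a simplicial fundamental cone) reflection vectors; and, for the hard implication $A=A^+\Rightarrow\Gamma$ finite, the comparison of the Perron root of $N=2I-A$ with that of its geometric symmetrization $M_{ij}=\sqrt{N_{ij}N_{ji}}$ by balancing the left and right Perron vectors, followed by the classical criterion that a Coxeter group is finite iff its Tits form is positive definite. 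That spectral comparison is indeed the crux, and your proof of $\rho(M)\le\rho(S)=\rho(N)$ is sound.

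One presentational point is worth tightening: the identification of $C=2I-M$ with twice the Tits form is not automatic, since for non-adjacent facets $F_i,F_j$ condition (L2)(ii) only gives $a_{ij}a_{ji}\ge 4$, so $C_{ij}=-\sqrt{a_{ij}a_{ji}}$ may be strictly less than $-2=-2\cos(\pi/n_{ij})$ with $n_{ij}=\infty$. This does not damage the argument, because your inequality $\rho(M)\le\rho(N)<2$ is established independently of that identification, and it then rules out any such pair inside a positive-type component: a non-adjacent pair would give $M_{ij}\ge 2$, hence $\rho(M)\ge 2$ via the corresponding $2\times 2$ principal submatrix. So the logical order should be: first prove $\rho(M)<2$, then conclude all facets are pairwise adjacent, and only then identify $C$ with twice the Tits form. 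With that reordering the proof is complete.
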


\begin{proposition}{\rm \cite[Proposition 23]{Vinberg71}}\qua \label{prop:vinberg1ii}
Let $\Gamma$ be a projective Coxeter group with a properly convex $n$-dimensional fundamental chamber in $\PS$, 
and let $A$ be the Cartan matrix of $\Gamma$.
Then $\Gamma$ is parabolic if and only if $A=A^0$ and $\rank A = n$.
\end{proposition}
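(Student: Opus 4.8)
The plan is to prove the two implications of Proposition~\ref{prop:vinberg1ii} separately, in each direction converting the geometric statement into the spectral data of the Cartan matrix $A$ by means of the Perron--Frobenius theory already exploited for Proposition~\ref{prop:vinberg1}, and then appealing to the convexity provided by Theorem~\ref{thm:vinberg0}. I first record the relevant spectral facts. Writing $A$ as the direct sum of its indecomposable components and setting $M=2I-A$, each component gives an irreducible nonnegative matrix, so by Frobenius's theorem its Perron eigenvalue is simple with strictly positive right and left eigenvectors; translating back, the smallest real eigenvalue of a component is simple, and the component is of zero type exactly when that eigenvalue is $0$. Hence a zero-type component of size $m$ has a one-dimensional kernel and rank $m-1$, and, by the strict monotonicity of the Perron root under deletion of an index, every proper principal submatrix of such a component is of positive type. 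Consequently, if $A=A^0$ has $k$ components then $\dim\ker A=k$, so $\rank A=f-k$ and the hypothesis $\rank A=n$ is equivalent to $f=n+k$. Finally I translate kernels into $V$: a right null vector $v$ yields $w=\sum_j v_jb_j$ with $\alpha_i(w)=0$ for all $i$, hence $w$ fixed by each $R_i$; dually a left null vector $u$ yields a functional $\beta=\sum_i u_i\alpha_i$ with $\beta(b_j)=0$ for all $j$, which is $\Gamma$--invariant and so determines an invariant hyperplane.

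For the implication that parabolic forces $A=A^0$ and $\rank A=n$, I would conjugate $\Gamma$ into the isometry group of an affine patch $\mathbb{E}^n\subset\PS$ and use that a Euclidean reflection is an affine involution whose reflection vector is parallel to $\mathbb{E}^n$. The non-degeneracy proviso in the definition of parabolic --- no invariant point at infinity and no invariant proper plane --- forces a cocompact action splitting as $\Gamma=W_1\times\dots\times W_k$ along an orthogonal decomposition $\mathbb{E}^n=\mathbb{E}^{n_1}\oplus\dots\oplus\mathbb{E}^{n_k}$, with each $W_j$ an irreducible affine Weyl group having $n_j+1$ walls. A direct computation of $a_{ij}=\alpha_i(b_j)$ identifies the block of $W_j$ with the classical affine Cartan matrix, which is positive semidefinite of corank one, hence of zero type. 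Thus $A=A^0$ with $k$ blocks and $f=\sum_j(n_j+1)=n+k$, that is, $\rank A=n$.

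For the converse I would treat the indecomposable case first, where $k=1$ and $f=n+1$. A properly convex $n$--polytope with $n+1$ facets is a simplex, so the $\alpha_i$ form a basis of $V^*$; then $\bigcap_i\ker\alpha_i=\{0\}$ forces $w=\sum_j v_jb_j=0$, the unique positive relation among the roots, whence the $b_j$ span the $n$--dimensional invariant hyperplane $W_\infty=\ker\beta$. Deleting one index leaves $n$ independent roots, a simple system on $W_\infty$ whose Cartan matrix is the positive-type submatrix from the preliminaries; by Proposition~\ref{prop:vinberg1} the reflection group they generate on $W_\infty$ is finite. Once the remaining reflection is shown to lie in this finite group, the linear part $\rho(\Gamma)$ acting on $W_\infty$ is finite, and averaging an inner product over $\rho(\Gamma)$ produces a $\Gamma$--invariant flat metric that exhibits $\Gamma$ as a Euclidean reflection group on the patch bounded by $W_\infty$. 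Theorem~\ref{thm:vinberg0} then supplies proper discontinuity and identifies $\mathring{\Omega}_\Gamma/\Gamma$ as the Euclidean orbifold, while $\rank A=n$ guarantees that this Euclidean space is genuinely $n$--dimensional, ruling out the extra invariant directions at infinity that would violate the proviso. The general case reduces to this one by splitting $\Gamma$ along the components of $A$.

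I expect the converse to be the crux, and within it the step asserting that the full linear part $\rho(\Gamma)$ --- not merely the subgroup generated by $n$ of the reflections --- is finite, equivalently that the remaining reflection is orthogonal for the $\rho(\Gamma)$--invariant form. This is the point where the mere sign and eigenvalue information must be upgraded to a genuine compatibility (symmetrizability) of the data $\alpha_i,b_i$, and it is exactly here that zero type is decisive: the vanishing of the smallest eigenvalue is a tangency condition that, as one sees already for the $3\times 3$ matrices with all products $a_{ij}a_{ji}=1$, forces the cycle products of $A$ to balance and hence the associated bilinear form to be positive semidefinite. Verifying this in general, together with checking the non-degeneracy clauses and the product reduction in the decomposable case, is where the real work lies.
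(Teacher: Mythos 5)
The paper does not actually prove this proposition: it is quoted verbatim from Vinberg \cite[Proposition 23]{Vinberg71}, so there is no in-paper argument to measure your attempt against. Judged on its own terms, your outline follows what is essentially the classical route (Perron--Frobenius spectral analysis of the components, identification of the invariant hyperplane $\ker\beta$ from the left kernel and of the relation $\sum_j v_j b_j=0$ from the right kernel, reduction of the irreducible case to an affine Weyl group), and the forward direction and the bookkeeping $\rank A=f-k$ are sound modulo standard facts about discrete Euclidean reflection groups that you invoke rather than prove.

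The genuine gap is the one you yourself flag, and it is not a minor loose end: in the converse you need that the \emph{full} linear part of $\Gamma$ on the invariant hyperplane $W_\infty$ is finite, not just the standard subgroup obtained by deleting one generator. Equivalently, you need that an indecomposable matrix of zero type satisfying (L1)--(L2) is symmetrizable, i.e.\ that all cyclic products balance, $a_{i_1i_2}a_{i_2i_3}\cdots a_{i_ki_1}=a_{i_2i_1}a_{i_3i_2}\cdots a_{i_1i_k}$, so that $A$ is diagonally conjugate to a positive semidefinite symmetric matrix and the restricted reflections all preserve one positive definite form. You verify this only for a $3\times 3$ cycle and then state that ``verifying this in general \dots\ is where the real work lies''; without that verification the averaging argument producing the invariant Euclidean metric does not get off the ground, and the converse implication is not established. (This is precisely the content of Vinberg's Propositions 15--16 and 23, which handle the zero-type symmetrizability; to complete your proof you would need to reproduce that argument --- for instance by showing that for each cycle the function $t+1/t$ of the cycle product $t<0$ forces $t=-\prod\sqrt{a_{ij}a_{ji}}$ at the zero-type locus, as in your $3\times3$ computation, and that this propagates to all cycles of an indecomposable matrix.) Secondary, smaller omissions: the ``product reduction'' in the decomposable case and the check of the non-degeneracy provisos in the definition of parabolic are asserted but not carried out.
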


We shall consider only the case when $P=P^s$, or equivalently, $\Omega_\Gamma=\mathring{\Omega}_\Gamma$;
we call $\Gamma$ \emph{perfect}. The following three statements are equivalent: (i) $\Gamma$ is perfect, (ii) the base space $P^s$ of
the associated orbifold $\hP$ equals $P$ exactly, and (iii) $\hP$ is compact.

The following is a fairly well-known and commonly used consequence of \cite{Vinberg71}.
\begin{proposition}{\rm \cite[Lemma 15 and Propositions 19 and 26]{Vinberg71}}\qua \label{prop:vinberg2}
Let $\Gamma$ be a perfect projective Coxeter group with a properly convex $n$-dimensional fundamental chamber $P$ in $\PS$ 
and let $A$ be the Cartan matrix of $\Gamma$.
Then exactly one of the following statements holds\/{\rm :}
\begin{itemize}
\item $\Gamma$ is elliptic.
\item $\Gamma$ is parabolic.
\item $A$ is indecomposable and of negative type, and $\rank A =\dim V= n+1$.
\end{itemize}
Moreover, if $\Gamma$ is neither elliptic nor parabolic, then
$\Gamma$ is irreducible and $\Omega_\Gamma$ is properly convex.
\end{proposition}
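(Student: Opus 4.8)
The plan is to run everything through the canonical decomposition $A=A^+\oplus A^0\oplus A^-$ into indecomposable components recorded above, together with Propositions \ref{prop:vinberg1} and \ref{prop:vinberg1ii}. Mutual exclusivity is immediate, since the three conditions $A=A^+$, $A=A^0$, and ``$A$ has a component of negative type'' are pairwise incompatible, an indecomposable block carrying a single type. So the real content is exhaustiveness together with the ``moreover'' clause, and I would derive both from one structural dichotomy: \emph{if $\Gamma$ is perfect and $A$ is decomposable, then $\Gamma$ is elliptic.}

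To prove this dichotomy --- the crux --- suppose $A$ is decomposable, say $A=A_1\oplus A_2$ for a partition $\I=\I_1\sqcup\I_2$ with $a_{ij}=a_{ji}=0$ whenever $i\in\I_1$ and $j\in\I_2$. This orthogonality yields a $\Gamma$--invariant splitting $V=V_1\oplus V_2$ with $\Gamma=\Gamma_1\times\Gamma_2$, where $\Gamma_k=\langle R_i: i\in\I_k\rangle$ and $P$ is the projective join of the two chambers (the reducibility in Vinberg's Proposition 19). Now I locate a bad point: for $i\in\I_1$ the functional $\alpha_i$ vanishes on $V_2$, while for $j\in\I_2$ it is positive on the relative interior of the $\Gamma_2$--chamber. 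Hence any $x=\Pi(v_2)$ with $v_2$ in that relative interior lies in $P$, lies on every wall indexed by $\I_1$, and on no wall indexed by $\I_2$; therefore $\Gamma_x=\Gamma_1$. Perfectness ($P=P^s$) forces $\Gamma_x$ finite, so $\Gamma_1$ is finite, and the symmetric point in $\Pi(V_1\setminus\{0\})$ shows $\Gamma_2$ is finite. Applying this to each component against its complement, every component is of positive type, so $A=A^+$ and $\Gamma$ is elliptic by Proposition \ref{prop:vinberg1}.

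Granting the dichotomy, suppose $\Gamma$ is neither elliptic nor parabolic. Then $A$ is indecomposable, so $\Gamma$ is irreducible and $A$ carries a single type. It is not of positive type (that would give $A=A^+$, hence elliptic). If it were of zero type, then $\Gamma$ would be an infinite affine reflection group; since the chamber of an irreducible affine reflection group is a simplex, $f=n+1$ and $\rank A=f-1=n$, so Proposition \ref{prop:vinberg1ii} would make $\Gamma$ parabolic, against our assumption. Hence $A$ is indecomposable of negative type. There remain two claims: $\rank A=\dim V$, and $\Omega_\Gamma$ is properly convex. For the rank, a nonzero radical vector of $A$ would produce a proper $\Gamma$--invariant subspace --- a fixed point or an invariant hyperplane at infinity --- which is incompatible with perfectness and with the no--invariant--plane proviso built into the negative--type case; so $A$ is nonsingular and $\rank A=\dim V=n+1$. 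Proper convexity of $\Omega_\Gamma$ then follows from the negative, full--rank type of $A$.

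The genuinely hard part is not the indecomposability dichotomy, which reduces to the transparent bad--point argument of the second paragraph once the reducible splitting is in hand, but the two rigidity statements just invoked: that an indecomposable zero--type perfect group has a simplicial chamber (so $\rank A=n$ rather than $n+1$), and that an indecomposable negative--type perfect group has nonsingular $A$ with properly convex $\Omega_\Gamma$. These lie deeper than Propositions \ref{prop:vinberg1} and \ref{prop:vinberg1ii}, and they are precisely where Vinberg's rank computation (Lemma 15) and his reducibility and convexity analysis (Propositions 19 and 26) are needed; I would import them, using the dichotomy above to reduce to the irreducible situation those results govern.
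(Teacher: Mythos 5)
Your central ``structural dichotomy'' --- that a perfect projective Coxeter group with decomposable Cartan matrix must be elliptic --- is false, and this breaks the proof. Counterexample: let $P$ be a square in the affine plane $\R^2 \subset \mathbb{S}^2$ and $\Gamma$ the Euclidean reflection group in its four sides (all dihedral angles $\tfrac{\pi}{2}$). Then $\Gamma$ is perfect, parabolic and infinite, yet $A$ decomposes as $A_{\{1,3\}}\oplus A_{\{2,4\}}$ into the two blocks indexed by the pairs of opposite (parallel) walls, each block of zero type with $a_{ij}a_{ji}=4$. The same happens for a rectangular box in any dimension, i.e.\ for perfect parabolic groups based on products of simplices. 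Your bad--point argument fails at two places for such examples. First, the splitting $V=V_1\oplus V_2$ does not exist: the reflection vectors $b_i$ need not span $V$ (for the square, $\mathrm{span}\{b_1,b_3\}$ and $\mathrm{span}\{b_2,b_4\}$ are each one--dimensional inside a three--dimensional $V$), and $P$ is not a projective join of two subchambers. Second, and more to the point, the face $\bigcap_{i\in\I_1}F_i$ of $P$ on which your bad point is supposed to live can be empty --- opposite facets of the square do not meet, and the relative interior of the ``$\Gamma_2$--chamber'' $\{v\in\bigcap_{i\in\I_1}\ker\alpha_i : \alpha_j(v)\ge 0,\ j\in\I_2\}$ meets $\mathrm{cone}(P)$ only in the origin. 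Run literally, your argument would conclude that the infinite dihedral group $\langle R_1,R_3\rangle$ is finite. The subsequent deduction ``not elliptic and not parabolic $\Rightarrow$ $A$ indecomposable'' therefore has no proof; it is a genuine part of the statement, not a formal consequence of Propositions \ref{prop:vinberg1} and \ref{prop:vinberg1ii}.

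For comparison, the paper does not attempt a self--contained trichotomy at all: it quotes Vinberg's Proposition 26, which yields the three listed possibilities \emph{or} that $\Gamma$ is a direct product of a parabolic group with $\bZ/2\bZ$, and then uses perfectness (via Vinberg's Lemma 17) only to exclude that fourth case; the ``moreover'' clause is obtained by noting that proper convexity of $P$ makes $\Gamma$ reduced (Vinberg's Proposition 18) and then citing Lemma 15 for irreducibility and proper convexity of $\Omega_\Gamma$. If you want to keep your outline, you must replace the decomposable--implies--elliptic step by the correct statement (decomposable and perfect implies elliptic \emph{or} parabolic), which is exactly where Vinberg's analysis of zero--type components --- compare Lemma \ref{lem:parabolic} in this paper --- is needed; it cannot be reduced to the bad--point argument.
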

\begin{proof}
By Proposition 26 of \cite{Vinberg71}, we have only the above three possibilities or
$\Gamma$ is the direct product of a parabolic group and $\bZ/2\bZ$.
In this case, $\Gamma$ is not perfect as we can see from Lemma 17 of \cite{Vinberg71}.

In the third case, we only have to prove the last statement. 
Since our fundamental domain $P$ is properly convex, 
Proposition 18 of \cite{Vinberg71} implies that $\Gamma$ is reduced. 
Lemma 15 of  \cite{Vinberg71} implies the last statement.
\end{proof}

Let $P$ be a properly convex $n$--polytope in $\PS$ and the polyhedral cone $K=\mathrm{cone}(P)$ be given.
Again a {\em side} of $K$ is a maximal convex subset of $K$. 
The complex of $K$, denoted by $\FK$, is the set of its (closed) sides, partially ordered by inclusion.
Let $K_1, \dotsc, K_f$ be the facets of $K$, and let $\I =\{1, \dots, f\}$. 
For any sides $L$ of $K$, define 
\[\sigma(L)=\{ i \in \I |\,  K_i \supset L\} \hbox{ and } \sigma(\FK) =\{ \sigma(L) \subset \I |\, L \in \FK\}.\]
For any subset $S$ of $\I$, the \emph{standard subgroup} $\Gamma_S$
of $\Gamma$ is the subgroup generated by the reflection $R_i$, $i \in S$,
and the \emph{principal submatrix} $A_S$ of $A$ is the submatrix of $A$ consisting of the entries $a_{ij}$ for each $i,j \in S$.
Denote by $S^+$ (resp. $S^0$, $S^-$) the subset $T$ of $S$ such that $A_T=A_S^+$ (resp. $A_S^0$, $A_S^-$).
We define $Z(S) := \{ i \in \I |\,  a_{ij} =0 \hbox{ for each } j \in S\}$.

\begin{proposition}{\rm \cite[Theorems 4]{Vinberg71}}\qua \label{prop:vinberg3}
Let $\Gamma$ be a projective Coxeter group, let
$P$ be its fundamental chamber and let $K$ be $\mathrm{cone}(P)$.
Assume that a subset $S$ of $\I$ satisfies two conditions {\rm :} $S=S^0$ and $Z(S)^0 = \emptyset$.
Then $S \in \sigma(\FK)$.
\end{proposition}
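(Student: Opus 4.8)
The plan is to exhibit a single side of $K$ whose defining facet set is exactly $S$. Consider the face $L := \bigcap_{i \in S} K_i = K \cap \bigcap_{i\in S}\ker\alpha_i$, which is again a side of the polyhedral cone $K$. Since $\sigma(L) \supseteq S$ whenever $L$ is a nonempty proper side, the statement $S \in \sigma(\FK)$ reduces to two claims: first, that $L \neq \{O\}$, so that $L$ is a genuine side and $\sigma(L)$ is defined; and second, that no facet $K_k$ with $k \notin S$ contains $L$, that is, $\sigma(L) = S$. The two hypotheses feed these two claims respectively: $S = S^0$ will produce the side, and $Z(S)^0 = \emp$ will forbid the extra facets.

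First I would use $S = S^0$. Each indecomposable component of $A_S$ is then of zero type, so by the parabolic criterion (Proposition \ref{prop:vinberg1ii}) the standard subgroup $\Gamma_S$ is parabolic, hence conjugate to a Euclidean reflection group. Linearizing, such a group fixes a distinguished ray, namely the common null direction of the positive semidefinite matrix $A_S$. This yields a nonzero $v_S \in V$ with $R_i v_S = v_S$ for all $i \in S$, whence $\alpha_i(v_S) = 0$ for $i \in S$; one checks further that $v_S$ may be taken in $\overline{K}$ (it is the cusp direction, a limit of interior points of $K$), so that $\alpha_j(v_S) \ge 0$ for every $j$. Thus the ray $\R_{\geq 0} v_S$ lies in $\bigcap_{i\in S}\ker\alpha_i \cap \overline{K} \subseteq L$, and $L$ is a nonempty proper side with $S \subseteq \sigma(L)$.

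Next, set $S' := \sigma(L)$ and suppose toward a contradiction that $T := S' \setminus S \neq \emp$. Every $k \in S'$ satisfies $\alpha_k(v_S) = 0$, hence $R_k v_S = v_S$, so $\Gamma_{S'}$ also fixes $v_S$. I would first argue that each $k \in T$ is orthogonal to $S$, i.e. $a_{ik} = 0$ for all $i \in S$, so that $T \subseteq Z(S)$: by the maximality of affine (zero-type) Coxeter diagrams, attaching to a zero-type component of $A_S$ any node joined to it yields a matrix of negative type, which cannot occur as a principal submatrix of the face-stabilizer $A_{S'}$. Once $T \subseteq Z(S)$, the connected component of any $k \in T$ in the Coxeter diagram of $A_{S'}$ has no edge to $S$ and therefore lies entirely within $Z(S)$; this component is of zero type, and so witnesses $Z(S)^0 \neq \emp$, contradicting the hypothesis. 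Hence $T = \emp$, $\sigma(L) = S$, and $S \in \sigma(\FK)$.

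The hard part is the type analysis underlying the previous paragraph: namely, that the standard subgroup stabilizing a side of $K$ never has an indecomposable component of negative type, and that an extra orthogonal facet through $L$ can arise only from a zero-type component. This is precisely Vinberg's classification of the admissible standard subgroups attached to the sides of the cone (Proposition \ref{prop:vinberg2} and the surrounding analysis in \cite{Vinberg71}), where the convex geometry of $K$, rather than pure diagram combinatorics, enters; the diagram-combinatorial inputs I use, that affine diagrams are the maximal indecomposable positive semidefinite ones and that adjoining a node to an affine diagram produces an indefinite one, are standard facts about Coxeter matrices.
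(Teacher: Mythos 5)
The paper offers no argument of its own for this proposition --- it is quoted directly from Vinberg's Theorem 4 --- so your sketch can only be judged on its own terms. Its architecture (use $S=S^0$ to produce a nonzero fixed ray in $K$ cutting out the face $L=K\cap\bigcap_{i\in S}K_i$, then use $Z(S)^0=\emp$ to show $\sigma(L)=S$) is indeed the shape of Vinberg's proof, but the two steps carrying all the weight are not actually established. In the first half, the appeal to Proposition \ref{prop:vinberg1ii} is a misapplication: that criterion requires $A_S=A_S^0$ \emph{and} $\rank A_S=n$, and the rank condition is simply not available for a standard subgroup, so you cannot conclude that $\Gamma_S$ is parabolic (in general it is not; it acts reducibly on $V$). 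More seriously, the parenthetical ``one checks further that $v_S$ may be taken in $\overline{K}$ (it is the cusp direction, a limit of interior points of $K$)'' is the entire content of this half and is asserted, not checked. The workable route is Perron--Frobenius: since every component of $A_S$ is of zero type there is $c=(c_j)_{j\in S}$ with all $c_j>0$ and $A_Sc=0$; putting $v_S:=-\sum_{j\in S}c_jb_j$ gives $\alpha_i(v_S)=-\sum_jc_ja_{ij}=0$ for $i\in S$ and $\alpha_k(v_S)=-\sum_jc_ja_{kj}\geq 0$ for $k\notin S$ by (L1). Note that the sign of $v_S$ has to be chosen correctly --- the ``cusp direction'' heuristic does not decide it --- and one must still rule out $v_S=0$.

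In the second half, the load-bearing claim is that $A_{\sigma(L)}$ has no component of negative type for a side $L$. This is not Proposition \ref{prop:vinberg2}, which concerns a perfect group $\Gamma$ acting on all of $V$ rather than face stabilizers, and it appears nowhere in the paper; it is itself a separate theorem of Vinberg, so deferring to it leaves the proof essentially as incomplete as citing Theorem 4 outright. Even granting it, two gaps remain: you assert that the component of $k\in T$ in $A_{\sigma(L)}$ ``is of zero type'' without excluding positive type, and a zero-type component of $A_T$ need not be a component of $A_{Z(S)}$ --- it sits inside a possibly larger indecomposable block of $A_{Z(S)}$, which could a priori be of negative type, in which case $Z(S)^0=\emp$ is not contradicted. (Incidentally, the explicit $v_S$ above yields $T\subseteq Z(S)$ for free: $k\in\sigma(L)$ forces $\alpha_k(v_S)=0$, i.e.\ $\sum_{j\in S}c_ja_{kj}=0$ with $c_j>0$ and $a_{kj}\leq 0$, hence $a_{kj}=0$ for all $j\in S$ --- no diagram maximality needed.) As it stands the proposal is a plausible reconstruction of the strategy, not a proof.
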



\begin{proposition}{\rm \cite{Vinberg71}}\qua \label{prop:vinberg3ii}
Let $\Gamma$ be a perfect projective Coxeter group,
let $P$ be its fundamental chamber and let $K=\mathrm{cone}(P)$.
Then $S \in \sigma(\FK)$ if and only if $\Gamma_S$ is finite or $S=\I$.
\end{proposition}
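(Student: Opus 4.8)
The plan is to translate everything into the face--index dictionary for the pointed cone $K=\mathrm{cone}(P)$ and then prove the two implications separately. Recall that the nonempty faces of $P$ correspond to the faces of $K$ other than the apex $\{O\}$ via $F\mapsto\mathrm{cone}(F)$, while $\{O\}$ itself is the minimal face, with $\sigma(\{O\})=\I$ because every facet $K_i$ passes through the origin. For a nonempty face $F$ and any $x$ in its relative interior one has $\Gamma_x=\langle R_i : i\in\sigma(\mathrm{cone}(F))\rangle=\Gamma_{\sigma(F)}$, since $x\in F_i$ exactly when $F_i\supseteq F$. I would record at the outset that each face $F$ satisfies $F=\bigcap_{i\in\sigma(F)}F_i$, so that $\sigma$ is injective on $\FK$ and the assertion $S\in\sigma(\FK)$ is precisely that $\bigcap_{i\in S}F_i$ is a face cut out by exactly the indices in $S$.

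For the forward implication, suppose $S=\sigma(L)$ for some $L\in\FK$. If $L=\{O\}$ then $S=\I$ and we are done; otherwise $L=\mathrm{cone}(F)$ for a nonempty face $F$, and choosing $x$ in the relative interior of $F$ gives $\Gamma_S=\Gamma_x$. This is the only place perfectness enters: since $P=P^s$, every point of $P$ has finite stabilizer, so $\Gamma_S$ is finite. (The full face $L=K$, with $S=\emptyset$, is the trivial instance $\Gamma_\emptyset=\{1\}$.)

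For the backward implication, the case $S=\I$ is handled by the apex, so assume $\Gamma_S$ is finite and $S\neq\I$. By Proposition \ref{prop:vinberg1} the principal submatrix $A_S$ is of positive type, hence a nonsingular $M$-matrix; in particular $A_S$ is invertible, so $\{\alpha_i : i\in S\}$ is linearly independent and $W:=\bigcap_{i\in S}\ker\alpha_i$ has dimension $n+1-|S|$. I would then set $F:=K\cap W=\bigcap_{i\in S}F_i$ and show it is a face of the asserted type. The crux is the feasibility claim: there exists $w\in W$ with $\alpha_j(w)>0$ for every $j\notin S$. Granting this, such $w$ lies in the relative interior of $F$ (only the constraints indexed by $S$ are tight), so $F$ is nonempty of dimension $n-|S|$, and $\sigma(F)=S$ exactly, since $\alpha_i$ vanishes on $W$ for $i\in S$ while $\alpha_j(w)>0$ for $j\notin S$. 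This gives $S=\sigma(F)\in\sigma(\FK)$, and I note that perfectness is not needed here.

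The feasibility claim is the main obstacle, and I would establish it by Gordan's theorem of the alternative applied to the functionals $\alpha_j|_W$ for $j\notin S$. Infeasibility would produce scalars $\lambda_j\geq0$, not all zero, with $\sum_{j\notin S}\lambda_j\alpha_j=\sum_{i\in S}\mu_i\alpha_i$ for some $\mu_i$, the right-hand side being the span $\mathrm{span}\{\alpha_i : i\in S\}$ that annihilates $W$. Evaluating at an interior point $x_0$ of $K$, where all $\alpha_k(x_0)>0$, forces $\sum_{i\in S}\mu_i\alpha_i(x_0)>0$, so some $\mu_i>0$. On the other hand, applying the identity to $b_k$ for $k\in S$ and using $a_{jk}\leq0$ for $j\neq k$ from condition (L1) of Theorem \ref{thm:vinbergM} gives $\sum_{i\in S}\mu_i a_{ik}\leq0$ for all $k\in S$; since $A_S^{-1}\geq0$ by the $M$-matrix property, this forces every $\mu_i\leq0$, a contradiction. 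The one technical point to justify with care is that positive type for a matrix satisfying (L1) is equivalent to being a nonsingular $M$-matrix, which is exactly what supplies both the invertibility of $A_S$ used above and the sign condition $A_S^{-1}\geq0$.
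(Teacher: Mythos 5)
Your proof is correct, but it is a genuinely different route from the paper's: the paper offers no argument at all here, simply citing equation (8) of \cite{Vinberg71}, whereas you supply a self-contained proof. Your forward implication (relative interior point of a face has stabilizer $\Gamma_{\sigma(L)}$, hence finite by $P=P^s$) is exactly how the statement follows from perfectness, and your backward implication replaces Vinberg's development of standard subgroups and his Theorem 4 with a compact Gordan/$M$-matrix argument: positive type of $A_S$ gives invertibility and $A_S^{-1}\geq 0$, and the alternative to feasibility of $\{\alpha_j|_W>0,\ j\notin S\}$ is killed by evaluating the dependence relation at an interior point of $K$ and at the vectors $b_k$, $k\in S$. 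This is close in spirit to Vinberg's own Perron--Frobenius analysis of the Cartan matrix but is packaged more efficiently; what it buys is a proof readable without unwinding \cite{Vinberg71}, at the cost of importing the $Z$-matrix/$M$-matrix dictionary. One step you should make explicit: you invoke Proposition \ref{prop:vinberg1} for $\Gamma_S$, but that proposition as stated applies to a projective Coxeter group and \emph{its} Cartan matrix, so you need the (true, and proved by Vinberg) fact that the standard subgroup $\Gamma_S$ is itself a projective Coxeter group with fundamental chamber $\{x:\alpha_i(x)\geq 0,\ i\in S\}$ and Cartan matrix $A_S$; only the implication ``$\Gamma_S$ finite $\Rightarrow A_S$ of positive type'' is used, but it still rests on this identification. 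With that citation added, and with the convention (clearly intended by the paper, despite its wording) that $\FK$ consists of all faces of the cone including the apex, the argument is complete.
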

\begin{proof} This is the statement of equation 8 in \cite{Vinberg71}.
\end{proof}

\begin{lemma}\label{lem:parabolic}
Let $\Gamma$ be a perfect projective Coxeter group, and let $A$ be the Cartan matrix of $\Gamma$.
If $A$ has a principal submatrix of zero type, then $\Gamma$ is parabolic.
\end{lemma}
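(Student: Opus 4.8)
The plan is to combine the trichotomy of Proposition \ref{prop:vinberg2} with Propositions \ref{prop:vinberg3} and \ref{prop:vinberg3ii}: I will produce a subset $S \subseteq \I$ for which $A_S$ consists entirely of zero type components and to which Proposition \ref{prop:vinberg3} applies, deduce $S \in \sigma(\FK)$, then use perfectness to force $S = \I$, whence $A = A^0$ and the trichotomy leaves only the parabolic alternative.

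First I would set up the right subset. By hypothesis there is a nonempty $T_0 \subseteq \I$ with $A_{T_0}$ of zero type, so $A_{T_0} = A_{T_0}^0$; hence the family of nonempty $T \subseteq \I$ with $A_T = A_T^0$ is nonempty, and since $\I$ is finite I can choose $S$ maximal among these. Then $S = S^0$ by construction, which is the first hypothesis of Proposition \ref{prop:vinberg3}.

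The crux is to verify the second hypothesis $Z(S)^0 = \emp$, and here the maximal choice of $S$ together with the sign condition (L1) does the work. Arguing by contradiction, suppose $Z(S)^0 \neq \emp$ and set $T = Z(S)^0$, so $A_T = A_T^0$. For $i \in T \subseteq Z(S)$ and $j \in S$ one has $a_{ij} = 0$ by the definition of $Z(S)$, and therefore $a_{ji} = 0$ by (L1); moreover $i \in S$ would force $a_{ii} = 2 \neq 0$, so $S \cap Z(S) = \emp$ and thus $T \cap S = \emp$. Consequently $A_{S \cup T}$ is the block sum $A_S \oplus A_T$ of two all-zero-type matrices, giving $A_{S \cup T} = A_{S \cup T}^0$ with $S \cup T \supsetneq S$, contradicting maximality. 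Hence $Z(S)^0 = \emp$, and Proposition \ref{prop:vinberg3} yields $S \in \sigma(\FK)$.

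Finally I would invoke perfectness and close with the trichotomy. Since $\Gamma$ is perfect and $S \in \sigma(\FK)$, Proposition \ref{prop:vinberg3ii} gives that either $\Gamma_S$ is finite or $S = \I$. But $A_S = A_S^0$ with $S \neq \emp$ has a zero type component, so $A_S \neq A_S^+$, and a finite reflection group has a positive type Cartan matrix by Proposition \ref{prop:vinberg1}; hence $\Gamma_S$ is infinite and $S = \I$. Thus $A = A_\I = A^0$, so $A$ is neither equal to $A^+$ (excluding the elliptic case of Proposition \ref{prop:vinberg2}) nor of negative type (excluding the third case), and the trichotomy forces $\Gamma$ to be parabolic. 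The main obstacle is the middle step, namely arranging $Z(S)^0 = \emp$ so that Proposition \ref{prop:vinberg3} applies; once $S \in \sigma(\FK)$ is secured, perfectness and the trichotomy finish the argument quickly.
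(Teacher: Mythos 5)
Your proof is correct and follows essentially the same route as the paper's: both produce a subset $S$ with $S = S^0$ and $Z(S)^0 = \emp$, apply Proposition \ref{prop:vinberg3} to get $S \in \sigma(\FK)$, use Proposition \ref{prop:vinberg3ii} with perfectness to force $S = \I$, and conclude via the trichotomy of Proposition \ref{prop:vinberg2}. The only difference is minor and technical: the paper enlarges a given zero-type set to $S \cup Z(S)^0$ in one step, whereas you choose $S$ maximal, which arranges $Z(S)^0 = \emp$ somewhat more transparently via the direct-sum observation.
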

\begin{proof}
Suppose that $S=S^0$ for some nonempty $S\subset \I$.
Define 
$T:=\text{Z}(S)^0$.
Observe that $S \cup T = (S \cup T)^0$ and $\text{Z}(S \cup T)^0 = \emptyset$, and thus
by Proposition \ref{prop:vinberg3}, $S \cup T \in \sigma(\FK)$ with $K=\mathrm{cone}(P)$.

Suppose that $S \cup T \ne \I$. Then $\Gamma_S$ is finite  by Proposition \ref{prop:vinberg3ii}, and 
$S = S^0$ should be empty, a contradiction.

If we have $S \cup T = \I$, then $\Gamma$ is either elliptic or parabolic by Proposition \ref{prop:vinberg2}
as $\I^0 = \I$. If $\Gamma$ is elliptic, then $\I^+ = \I$, a contradiction as $S$ is not empty.
Hence, $\Gamma$ is parabolic.
(See also the proof of Theorem 7 of \cite{Vinberg71}.)
\end{proof}

\begin{proposition}\label{prop:parabolic2}
Let $\Gamma$ be a perfect projective Coxeter group, and let $A = (a_{ij})$ be the Cartan matrix of $\Gamma$.
If $\Gamma$ is not parabolic, then $a_{ij}a_{ji} >  4$ holds if $F_i$ and $F_j$ are not adjacent and $i \neq j$.
\end{proposition}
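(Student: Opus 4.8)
The plan is to argue by contradiction, using the dichotomy already recorded in condition (L2) together with Lemma \ref{lem:parabolic}. Since $F_i$ and $F_j$ are not adjacent and $i \neq j$, case (ii) of (L2) already gives $a_{ij}a_{ji} \geq 4$; thus the only thing I need to exclude is the borderline equality $a_{ij}a_{ji} = 4$. So I would assume $a_{ij}a_{ji} = 4$ and aim to show that $\Gamma$ must then be parabolic, contradicting the hypothesis.

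The mechanism is that this equality produces a principal submatrix of zero type. I would consider the $2 \times 2$ principal submatrix
\[ A_{\{i,j\}} = \begin{pmatrix} 2 & a_{ij} \\ a_{ji} & 2 \end{pmatrix}, \]
whose characteristic polynomial is $(2-\lambda)^2 - a_{ij}a_{ji}$ and whose eigenvalues are therefore $2 \pm \sqrt{a_{ij}a_{ji}}$. Under the assumption $a_{ij}a_{ji} = 4$ these eigenvalues are $4$ and $0$, so the smallest real eigenvalue is $0$. Furthermore, since $a_{ij}a_{ji} = 4 \neq 0$, condition (L1) forces both off-diagonal entries to be (strictly) negative, so $A_{\{i,j\}}$ is indecomposable; hence it is an indecomposable principal submatrix of zero type, which is to say $S := \{i,j\}$ satisfies $S = S^0$ in the terminology used in Lemma \ref{lem:parabolic}.

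With this in hand, Lemma \ref{lem:parabolic} applies essentially verbatim: a perfect projective Coxeter group whose Cartan matrix has a principal submatrix of zero type is parabolic. This contradicts the hypothesis that $\Gamma$ is not parabolic, so the equality $a_{ij}a_{ji} = 4$ is impossible and $a_{ij}a_{ji} > 4$ follows. I do not expect any genuine obstacle here: the argument is a short eigenvalue computation feeding into the already-proved lemma, and the only point requiring a moment of care is the indecomposability check, which guarantees that the $2 \times 2$ block really is a single zero-type component rather than a direct sum of two positive-type $1 \times 1$ blocks.
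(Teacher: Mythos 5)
Your proposal is correct and follows essentially the same route as the paper: assume the borderline equality $a_{ij}a_{ji}=4$, observe that the $2\times 2$ principal submatrix $\bigl[ \begin{smallmatrix} 2 & a_{ij} \\ a_{ji} & 2 \end{smallmatrix} \bigr]$ is then of zero type, and invoke Lemma \ref{lem:parabolic} to contradict the non-parabolicity hypothesis. Your explicit eigenvalue computation and the indecomposability check are details the paper leaves implicit, but the argument is the same.
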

\begin{proof}
If $a_{ij}a_{ji} = 4$ holds for some $i \neq j$, then the principal submatrix
$\bigl[ \begin{smallmatrix}
2 & a_{ij} \\
a_{ji} & 2
\end{smallmatrix} \bigr]
$
of $A$ is of zero type. By Lemma \ref{lem:parabolic}, $\Gamma$ is parabolic.
\end{proof}

Proposition \ref{prop:parabolic2} shows that for negative-type perfect projective Coxeter groups,
we can now replace the semialgebraic condition (L2) with
an open condition (L2)$'$ where we replace (L2)(ii) with
\begin{description}
\item[(L2)$'$(ii)]  if $F_i$ and $F_j $ are not adjacent, then $a_{ij}a_{ji} > 4$.
\end{description}

The following was one of the main results of \cite{Vinberg71}.

\begin{theorem}{\rm \cite[Corollary 1]{Vinberg71}}\qua\label{thm:Vinberg}
Let $A$ be an $f \times f$ matrix satisfying {\rm (L1)} and {\rm (L2),} and let $\rank A=n+1$.
Suppose that $A$ has no component of zero type. Then
there exists a projective Coxeter group $\Gamma \subset \SL$ with the Cartan matrix $A$.
Furthermore,
$\Gamma$ is unique up to the conjugations in $\SL$.
\end{theorem}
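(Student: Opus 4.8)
The plan is to split the statement into an existence part, which is convex-geometric, and a uniqueness part, which is linear-algebraic, and to lean on Theorem~\ref{thm:vinbergM} for the Coxeter property. For existence I would first build the reflection data from $A$ alone. As $\rank A=n+1$, take a rank factorization $A=MN$ with $M\in\R^{f\times(n+1)}$ and $N\in\R^{(n+1)\times f}$; both factors then have full rank $n+1$. Put $V=\R^{n+1}$, let $b_j\in V$ be the $j$th column of $N$ and $\alpha_i\in V^*$ the $i$th row of $M$, so that $\alpha_i(b_j)=(MN)_{ij}=a_{ij}$ and in particular $\alpha_i(b_i)=2$. Then each $R_i=I_V-\alpha_i\otimes b_i$ is a reflection in $\SL$, the $\{b_j\}$ span $V$, and the $\{\alpha_i\}$ span $V^*$.

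Next I would realize a properly convex fundamental chamber so that Theorem~\ref{thm:vinbergM} applies. Set $K=\{x\in V: \alpha_i(x)\geq 0 \text{ for all } i\}$ and $P=\Pi(K\setminus\{O\})$. Since $\{\alpha_i\}$ spans $V^*$ we have $\bigcap_i\ker\alpha_i=\{O\}$, so $K$ is pointed and $P$ is properly convex. The substantive point is that $P$ is full-dimensional with the $\alpha_i$ cutting out its genuine facets, and here the spectral hypotheses enter. For an indecomposable negative-type block, Perron--Frobenius applied to the nonnegative matrix $2I-A$ (whose off-diagonal entries are $-a_{ij}\geq 0$ by (L1)) yields a strictly positive eigenvector $u$ with $Au=\lambda u$ for the smallest real eigenvalue $\lambda<0$; then $v=\sum_j u_j b_j$ satisfies $\alpha_i(v)=(Au)_i=\lambda u_i<0$, so $-v\in\mathring K$ and the interior is nonempty. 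A positive-type block contributes a finite (elliptic) reflection group in a complementary subspace (Proposition~\ref{prop:vinberg1}), and because $A$ has \emph{no} zero-type component these pieces assemble into a properly convex $P$ rather than one with a parabolic (affine) direction. The face-lattice results (Propositions~\ref{prop:vinberg3} and~\ref{prop:vinberg3ii}) then identify which pairs $\{i,j\}$ bound a ridge, reconciling the dichotomy of (L2) with the actual adjacency in $P$: the $2\times 2$ block is positive type and $F_i,F_j$ are adjacent exactly when $a_{ij}a_{ji}=4\cos^2(\pi/n_{ij})<4$. With $P$ thus identified, hypotheses (L1) and (L2) hold for $P$, so Theorem~\ref{thm:vinbergM} shows $\Gamma=\langle R_1,\dots,R_f\rangle$ is a projective Coxeter group whose fundamental chamber is $P$ and whose Cartan matrix is, by construction, $A$.

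For uniqueness, suppose $R_i=I_V-\alpha_i\otimes b_i$ and $R_i'=I_{V'}-\alpha_i'\otimes b_i'$ both realize $A$ on $(n+1)$-dimensional spaces. The key remark is that the linear relations among the $b_j$ are dictated by $A$: since $\{\alpha_i\}$ spans $V^*$, a combination $\sum_j c_j b_j$ vanishes iff $(Ac)_i=0$ for all $i$, i.e. iff $c\in\ker A$, and the identical computation holds for the $b_j'$. Hence the surjections $\R^f\to V$ and $\R^f\to V'$ sending $e_j$ to $b_j$ and $b_j'$ both have kernel $\ker A$, so they induce a linear isomorphism $g\colon V\to V'$ with $g b_j=b_j'$ for every $j$. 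Evaluating $\alpha_j'(b_k')=a_{jk}=\alpha_j(b_k)=(\alpha_j\circ g^{-1})(b_k')$ and using that the $\{b_k'\}$ span $V'$ forces $\alpha_j'=\alpha_j\circ g^{-1}$, whence $gR_jg^{-1}=I_{V'}-(\alpha_j\circ g^{-1})\otimes(gb_j)=R_j'$ for all $j$. Identifying $V=V'=\R^{n+1}$ and replacing $g$ by $\lambda g$ with $\lambda=|\det g|^{-1/(n+1)}$ leaves every $gR_jg^{-1}$ unchanged while placing $g$ in $\SL$; thus the two realizations are $\SL$-conjugate, which is the asserted uniqueness. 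The diagonal ambiguity of~\eqref{eqn:diagonal} is harmless here, since we may fix scalings in each realization that reproduce $A$ exactly.

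The main obstacle is the middle step: promoting the bare inequalities $\alpha_i\geq 0$ to a genuinely properly convex polytope with the correct facets and the correct adjacency pattern, since Theorem~\ref{thm:vinbergM} is only available once such a $P$ is in hand. The hypotheses are exactly what make this possible --- $\rank A=n+1$ forces $K$ to be simultaneously full-dimensional and pointed, while the absence of a zero-type component rules out a parabolic direction that would break proper convexity (compare Lemma~\ref{lem:parabolic}). Once the chamber is correctly described, everything else is bookkeeping.
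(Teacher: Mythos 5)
The paper never proves this statement; it is imported wholesale from Vinberg (Corollary 1 of \cite{Vinberg71}), so your argument has to stand on its own. The uniqueness half does stand: once the diagonal ambiguity \eqref{eqn:diagonal} is fixed, $\rank A=n+1$ forces $\{b_j\}$ and $\{\alpha_i\}$ to span $V$ and $V^*$ in any realization, the linear relations among the $b_j$ are read off from $\ker A$, and the resulting $g$ with $gb_j=b_j'$ and $\alpha_j'=\alpha_j\circ g^{-1}$ conjugates generator to generator; rescaling $g$ into $\SL$ is harmless. The preparatory steps of the existence half are also essentially right though incomplete: pointedness of $K$, the Perron--Frobenius interior point for an indecomposable negative-type matrix (for a direct sum of positive- and negative-type blocks you still need to assemble a vector $t$ with $At>0$ block by block, which you only gesture at), and --- omitted entirely --- the fact that each $\alpha_i$ actually cuts a facet of $K$, which one gets from Farkas together with $2=\alpha_i(b_i)=\sum_{j\ne i}c_j\alpha_j(b_i)\le 0$ being impossible.

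The genuine gap is the step you yourself flag as the crux: matching the dichotomy of (L2) to the actual adjacency pattern of $P=\Pi(K\setminus\{O\})$ so that Theorem~\ref{thm:vinbergM} applies. You invoke Propositions~\ref{prop:vinberg3} and~\ref{prop:vinberg3ii} to ``identify which pairs bound a ridge,'' but both propositions assume that $\Gamma$ is already a projective Coxeter group (the second even assumes perfectness), which is precisely what is being proved; the appeal is circular. Worse, the correspondence you want --- $F_i,F_j$ adjacent in $K$ if and only if $a_{ij}a_{ji}<4$ --- is false for the cone cut out by a rank factorization. Take $f=3$, $n+1=3$, and $A$ the direct sum of $\bigl[\begin{smallmatrix}2&-3\\-3&2\end{smallmatrix}\bigr]$ (negative type) and $[2]$ (positive type): every hypothesis of the theorem holds, yet $K$ is a simplicial cone, so $F_1$ and $F_2$ meet in a ridge while $a_{12}a_{21}=9\geq 4$. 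Thus the constructed $P$ need not satisfy (L2)(i) relative to its own adjacency, and Theorem~\ref{thm:vinbergM} in the form stated in this paper cannot be applied to it. Closing the gap requires either the purely algebraic form of Vinberg's sufficiency criterion (every product $a_{ij}a_{ji}$ is $\geq 4$ or equal to $4\cos^2(\pi/m)$, with no reference to which facets of the chamber actually meet), or a construction of the chamber that controls its ridges --- neither of which is supplied by your argument or by the statements quoted in the paper.
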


\section{Deformation spaces of real projective structures}
\label{s:DeformationSpaces}

Through this section, we give three descriptions of the deformation space of real projective structures on 
a compact $n$--dimensional Coxeter orbifold $\hP$,
when $\hP$ admits a real projective structure but does not admit a spherical or Euclidean structure.
In Section \ref{subs:RepresentationSpaces}, we describe the deformation space in terms of representations from $\pi_1(\hP)$ into $\SL$.
In Section \ref{subs:VinbergEq}, we describe this representation space in terms of polynomial equations and Cartan matrices 
following Vinberg respectively.

\subsection{Deformation spaces and the representation spaces}
\label{subs:RepresentationSpaces}



We restate the results of Vinberg \cite{Vinberg71} for the perfect groups in the orbifold-viewpoint. 
\begin{proposition}[Vinberg] \label{prop:devCox}
Let $\hP$ be a compact real projective Coxeter $n$--orbifold where
$\hP$ does not admit a spherical or Euclidean structure.
Then each developing map $D$ of the universal cover $\uP$ of $\hP$ is a diffeomorphism to
an open properly convex domain in $\PS$. Furthermore, $D(P)$ is a fundamental chamber 
for the fundamental domain $P$ of $\uP$, a properly convex $n$--polytope, 
the projective Coxeter group $h(\pi_1(\hP))$ where 
$h: \pi_1(\hP) \ra \SL$ is the holonomy homomorphism associated with $D$. 
\end{proposition}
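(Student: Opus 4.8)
The plan is to recover Vinberg's framework from the given real projective orbifold structure and then feed it into Theorems \ref{thm:vinbergM} and \ref{thm:vinberg0}, using the trichotomy of Proposition \ref{prop:vinberg2} to upgrade convexity to proper convexity.

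First I would read off the holonomy of the generating reflections. Along each silvered facet $F_i$ the local $\bZ/2\bZ$--holonomy is an order--two element of $\SL$ fixing a hyperplane, and every such element is a projective reflection $\hat R_i = I_V - \alpha_i\otimes b_i$ with $\alpha_i(b_i)=2$. Choosing the signs so that $\alpha_i\ge 0$ on the developed chamber and setting $a_{ij}=\alpha_i(b_j)$, I analyze the local model at each ridge $F_i\cap F_j$ of order $n_{ij}$: its holonomy is a finite dihedral subgroup of $\SL$, so $\hat R_i\hat R_j$ is a rotation of order $n_{ij}$. Computing $\hat R_i\hat R_j$ on $\mathrm{span}(b_i,b_j)$ gives trace $a_{ij}a_{ji}-2$, whence $a_{ij}a_{ji}=4\cos^2(\pi/n_{ij})<4$, which is (L2)(i). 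Because the wedge between the two mirrors is the developed chamber and its dihedral angle $\pi/n_{ij}\le \pi/2$ is convex, the signs are forced: $a_{ij}\le 0$ with $a_{ij}=0\iff a_{ji}=0$, which is (L1).

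The heart of the argument, and the main obstacle, is to show that $D$ carries a fundamental domain $F\subset\uP$ (a lift of $P=|\hP|$) homeomorphically onto a \emph{properly convex} polytope $P'=D(F)$ whose facets lie on the hyperplanes $\ker\alpha_i$. On the interior $D$ is a local diffeomorphism; along the facets it develops into the reflection hyperplanes; along the ridges the angles $\pi/n_{ij}<\pi$ are convex; and at each vertex $v$ the local group is finite (compactness of $\hP$ gives $P=P^s$), so $v$ develops to a convex cone point. Thus $\partial P'$ is locally convex. Here I would pass to the Kuiper completion of $\uP$ with respect to the path metric pulled back by $D$ and extend $D$ to it; a van~Heijenoort--type convexity argument (cf.\ \cite{Choi99}) then forces the locally convex developed image of the simply connected ball $F$ to be embedded, and rules out the degenerate possibilities (the image wrapping around $\PS$, or containing a pair of antipodal points) that would contradict the absence of a spherical structure. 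This yields a genuine properly convex polytope $P'$, the only delicate step.

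With $P'=D(F)$ properly convex, Theorem \ref{thm:vinbergM} shows that $\Gamma := h(\pi_1(\hP))=\langle \hat R_1,\dots,\hat R_f\rangle$ is a projective Coxeter group with fundamental chamber $P'$, and compactness of $\hP$ makes it perfect. Theorem \ref{thm:vinberg0} then gives that $\Omega_\Gamma=\bigcup_{\gamma\in\Gamma}\gamma P'$ is convex, that $\Gamma$ acts properly discontinuously on the open set $\mathring{\Omega}_\Gamma$, and that $\mathring{\Omega}_\Gamma/\Gamma$ is the orbifold with fundamental domain $P'$. Since $\hP$ admits neither a spherical nor a Euclidean structure, $\Gamma$ is neither elliptic nor parabolic (Propositions \ref{prop:vinberg1} and \ref{prop:vinberg1ii}), so Proposition \ref{prop:vinberg2} puts us in the negative--type case with $\Omega_\Gamma$ properly convex; hence $\mathring{\Omega}_\Gamma$ is an open properly convex domain. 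Finally, by $h$--equivariance $D$ carries the $\pi_1(\hP)$--tiling of $\uP$ by translates of $F$ onto the $\Gamma$--tiling $\{\gamma P'\}$ of $\Omega_\Gamma$; the holonomy $h$ is an isomorphism of Coxeter groups, so the two tilings are indexed identically, and since distinct translates $\gamma P'$ have disjoint interiors, $D$ is injective. Combined with the embedding of each tile, $D$ is a diffeomorphism of $\uP$ onto $\mathring{\Omega}_\Gamma$, and $D(F)=P'$ is a fundamental chamber for $h(\pi_1(\hP))$, as required.
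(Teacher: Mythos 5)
Your overall architecture is recognizably close to the paper's (Kuiper completion, the convexity machinery of \cite{Choi99}, Vinberg's Theorems \ref{thm:vinbergM} and \ref{thm:vinberg0}, and Proposition \ref{prop:vinberg2} to upgrade to proper convexity), but you run it in the opposite order: chamber first, then the tiling. The gap sits precisely in the step you yourself flag as the only delicate one. Local convexity of $\partial D(F)$ --- flat facets, dihedral angles $\pi/n_{ij}\le\pi/2$ at ridges, finite vertex groups --- together with a van Heijenoort-type argument can at best show that $D(F)$ is an embedded \emph{convex} polytope in $\PS$; it cannot show that $D(F)$ is \emph{properly} convex. The degenerate possibilities (for instance $D(F)$ being a lune-like region containing a pair of antipodal points, as already happens for the wedge between two great hyperspheres meeting at angle $\pi/n$) are invisible to the local models, and the hypothesis that $\hP$ admits no spherical structure does not exclude them: that hypothesis constrains the global holonomy group, not the developed image of a single chamber. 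This matters because Theorem \ref{thm:vinbergM} takes proper convexity of the chamber as a \emph{hypothesis}, so you cannot defer the issue to Proposition \ref{prop:vinberg2} later without circularity.

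The paper closes exactly this gap by working with the whole universal cover first: it shows that every triangle $T$ in the Kuiper completion of $\uP$ having an edge $l$ with $T-l^o\subset\uP$ satisfies $T\subset\uP$, because a missing edge would have to cross infinitely many reflection walls $S_i$, producing infinitely many distinct reflections $R_i$ with $R_i(U)\cap U\ne\emp$ for a fixed small neighborhood $U$ of a limit point on the other two edges --- contradicting the proper discontinuity of the $\pi_1(\hP)$--action on $\uP$. Theorem A.2 of \cite{Choi99} then makes $D$ a diffeomorphism onto a convex domain, $D(F)$ is read off as the fundamental chamber, and only at the end does Proposition \ref{prop:vinberg2} deliver proper convexity and irreducibility. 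Note that your proposal never invokes proper discontinuity of the deck action, which is the engine of the paper's convexity argument; to repair your route you would need to import some such global input already at the chamber stage, or reorganize as the paper does. Your first paragraph (deriving (L1) and (L2)(i) from the dihedral local models) and your final paragraph (matching the two tilings to get global injectivity of $D$) are sound once the chamber step is secured.
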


Given a Coxeter orbifold $\hP$, the choice of the fundamental polytope $P$ in the universal cover gives 
us the fundamental set of generators in $\pi_1(\hP)$ associated with each facet of $P$. 
They are labelled by $r_1, \dots, r_f$ where $f$ is the number of facets of $P$. 
Call these the {\em fundamental generators}. 
Since we can imbed $\Hom(\pi_1(\hP), \SL)$ as an algebraic subset of $\SL^f$ for the number $f$ of fundamental generators of $\pi_1(\hP)$,
we let $\Hom(\pi_1(\hP), \SL)$ be a real algebraic set with the standard point-set topology of the subspace.

The $\SL$--action on $\Hom(\pi_1(\hP), \SL)$ by conjugation is not effective since $\pm I_V$ is in the kernel and
\begin{equation*}
\Hom(\pi_1(\hP), \SL)/\SL
\end{equation*}
is equivalent to
\begin{equation*}
\Hom(\pi_1(\hP), \SL)/\PGL.
\end{equation*}
We will study the later space only.

A discrete subgroup $\Gamma$ of $\SL$ is {\em dividing } if $\Gamma$ acts faithfully and properly discontinuously
on a properly convex open subset $\Omega$ of $\PS$ 
so that the quotient $\Omega/\Gamma$ is compact. (See Benoist \cite{Benoist01}.) 
Let $D'_{\rm rep}(\hP)$ denote the space of dividing faithful representations of $\pi_1(\hP)$. 
Define the subspace $D_{\rm rep}(\hP)$ by $h \in D_{\rm rep}(\hP)$ if $h$ is discrete and faithful and 
$h(\pi_1(\hP))$ acts as a dividing projective Coxeter group on a properly convex domain. 

\begin{question}\label{q:ref}
Is $D'_{\rm rep}(\hP) = D_{\rm rep}(\hP)$ for a compact Coxeter $n$-orbifold $\hP$? 
\end{question} 

We combine the works of Benoist, Charney--Davis, Choi, Koszul, Qi, and finally Davis to prove the following theorem.
Let $\De(\hP)$ be the deformation space of real projective structures on $\hP$. (Of course, this set could be empty.)

\begin{theorem}\label{thm:defrepspaces}
Let $\hP$ be a compact Coxeter $n$--orbifold.
Assume that $\hP$ admits a real projective structure, but does not admit a spherical or Euclidean structure.
\begin{itemize}
\item $D_{\rm rep}(\hP)$ is a union of components of $\Hom(\pi_1(\hP), \SL)$,
$\PGL$ acts properly and freely on it, and the quotient space $D_{\rm rep}/\PGL$ is a Hausdorff space.
\item $ \De({\hP}) \ra D_{\rm rep}(\hP)/\PGL$  is a homeomorphism.
\item For each element $h \co \pi_1(\hP) \ra \SL$ of $D_{\rm rep}(\hP)$, 
the sphere $\PS$ contains a unique properly convex open subset $\Omega$ of $\PS$
so that $\Omega/h(\pi_1(\hP))$ is diffeomorphic to $\hP$. Here
$\Omega$ is determined up to the antipodal map $\mathcal{A}:= -I_V$
\end{itemize}
\end{theorem}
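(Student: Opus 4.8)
The three assertions are tightly linked, so the plan is to set up a single dictionary between structures, representations, and Cartan matrices, and then read off each bullet. The starting point is Proposition~\ref{prop:devCox}: every real projective structure on $\hP$ develops to a diffeomorphism onto a properly convex open domain, and its holonomy is a projective Coxeter group whose fundamental chamber is the developed image $D(F)$ of the fundamental domain. Composed with Vinberg's parametrization (Theorems~\ref{thm:vinbergM} and~\ref{thm:vinberg0}), this says that the holonomies of real projective structures are exactly the representations landing in $D_{\rm rep}(\hP)$, and that such a holonomy is encoded, up to the diagonal rescaling \eqref{eqn:diagonal}, by a Cartan matrix satisfying (L1) and (L2)$'$. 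I would carry all three bullets through this correspondence.

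For the first bullet I would argue openness and closedness separately. \emph{Openness}: since each generator $r_i$ is an involution, $h(r_i)$ is an involution in $\SL$ for every $h\in\Hom(\pi_1(\hP),\SL)$; for $h\in D_{\rm rep}(\hP)$ it is a reflection, whose $-1$ eigenspace is one--dimensional, and since $\dim\ker(h(r_i)+I_V)$ is upper semicontinuous it stays one nearby, so nearby representations are again generated by reflections with a nearby Cartan matrix. Because (L1) and (L2)$'$ are open conditions and $\Gamma$ is irreducible of negative type with $\rank A=\dim V$ (Proposition~\ref{prop:vinberg2}), Theorem~\ref{thm:Vinberg} together with Koszul's openness of convex structures \cite{Koszul68} shows the nearby representation still divides a properly convex domain; hence $D_{\rm rep}(\hP)$ is open. \emph{Closedness}: a limit of discrete faithful dividing reflection representations remains discrete and faithful, and by the closedness of the set of holonomies of convex real projective structures (Benoist \cite{Benoist01,Benoist08}; compare \cite{cg}) the limit still divides a properly convex domain, the $-1$ eigenvalues staying simple because proper convexity forbids a reflection from degenerating into a higher--rank involution. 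Thus $D_{\rm rep}(\hP)$ is a union of components. \emph{Proper and free action}: by Proposition~\ref{prop:vinberg2} every $h\in D_{\rm rep}(\hP)$ is irreducible, so by Schur its centralizer in $\GL$ consists of scalars and its $\PGL$--stabilizer is trivial (freeness); properness of the conjugation action on discrete faithful irreducible representations is standard (cf.\ \cite{Goldman90}), and a proper action has Hausdorff quotient.

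For the third bullet, \emph{existence} is immediate from Vinberg: taking $P=D(F)$ as fundamental chamber, Theorem~\ref{thm:vinberg0} produces the convex $\Gamma$--invariant set $\Omega_\Gamma=\bigcup_{\gamma\in\Gamma}\gamma P$ with $\mathring{\Omega}_\Gamma/\Gamma$ diffeomorphic to $\hP$, and Proposition~\ref{prop:vinberg2} makes it properly convex. \emph{Uniqueness up to $\mathcal{A}$}: an irreducible group dividing a properly convex domain determines that domain uniquely in $\mathbb{RP}^n$ (Benoist \cite{Benoist01}); lifting to the double cover $\PS$, the two lifts of the $\mathbb{RP}^n$--domain are interchanged by $\mathcal{A}=-I_V$, giving uniqueness up to $\mathcal{A}$. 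The second bullet then follows: the holonomy map $\De(\hP)\ra D_{\rm rep}(\hP)/\PGL$ is well defined and continuous, it is a local homeomorphism by the Ehresmann--Thurston principle \cite{Choi04}, it is injective because by the uniqueness just proved the developing image is recovered from the holonomy, so structures with conjugate holonomy are isotopic, and it is surjective because each $h\in D_{\rm rep}(\hP)$ gives $\mathring{\Omega}_\Gamma/\Gamma$, which, being an aspherical Coxeter orbifold with the right fundamental group, is diffeomorphic to $\hP$ and hence transports its convex real projective structure back to $\hP$ (Davis \cite{Davis2010,Davis2013}). A continuous bijective local homeomorphism is a homeomorphism.

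The main obstacle I expect is the closedness half of the first bullet together with the surjectivity in the second: both require ruling out degeneration of the convex structure in a limit, namely that properly convex domains do not collapse and that reflections do not acquire extra $-1$ eigenvalues, which is exactly where Benoist's closedness and the irreducibility/negative--type dichotomy of Proposition~\ref{prop:vinberg2} do the real work. The subtler functorial point is controlling the realization of the abstract quotient $\mathring{\Omega}_\Gamma/\Gamma$ as a structure on the \emph{fixed} orbifold $\hP$, rather than on a merely abstractly diffeomorphic one; this is handled by Davis's asphericity and rigidity results for Coxeter orbifolds.
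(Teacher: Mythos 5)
Your proposal follows the same overall route as the paper: Proposition~\ref{prop:devCox} plus Vinberg's theory for the dictionary, an open--closed argument (Koszul for openness, Benoist for closedness) for the first bullet, uniqueness of the divided domain for injectivity, the Ehresmann--Thurston principle of \cite{Choi04} for the local homeomorphism, and Davis's results for surjectivity. Two of your delegations differ harmlessly from the paper: for uniqueness of $\Omega$ you cite Benoist directly and lift from $\mathbb{RP}^n$ to $\PS$, whereas the paper proves it by hand (a $K(\Gamma,1)$ homology argument for overlapping domains, then a proximal-element dynamical argument producing a low-dimensional invariant convex set contradicting irreducibility); and for surjectivity the paper makes explicit the Charney--Davis diagram-rigidity step before invoking Davis's diffeomorphism theorem, which your closing remark only gestures at. These are acceptable variants.

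However, two steps as written have genuine holes. First, freeness: over $\R$, Schur's lemma only gives that the centralizer of an irreducible representation is a division algebra ($\R$, $\mathbb{C}$ or $\mathbb{H}$), not the scalars --- e.g.\ the irreducible action of $\mathrm{SO}(2)$ on $\R^2$ has centralizer $\mathbb{C}^{\times}$. To exclude the complex and quaternionic cases you must use the reflections: a reflection has a one-dimensional $(-1)$--eigenspace, which is incompatible with a commuting complex structure. This is exactly what the paper's Lemma~\ref{lem:properfree} supplies by hand, showing a commuting element permutes the (generically positioned) vertices of the chamber, is diagonalizable, and would force the reflection vectors into a proper invariant splitting, contradicting irreducibility; your one-line Schur appeal skips the essential input. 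Second, closedness: Benoist's theorem that the dividing representations form a closed subset of $\Hom(\pi_1(\hP),\SL)$ is not unconditional --- it requires the center of every finite-index subgroup of $\pi_1(\hP)$ to be trivial. The paper verifies this hypothesis via a real input you omit: by Proposition~\ref{prop:vinberg2} the group is infinite, non-affine and irreducible, whence Qi's theorem gives the trivial-virtual-center condition. Relatedly, your opening assertion that ``a limit of discrete faithful dividing reflection representations remains discrete and faithful'' is precisely the nontrivial content of Benoist's theorem under that hypothesis, not an independent preliminary observation, so as stated your closedness argument is circular until the hypothesis check is inserted.
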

\begin{proof}
The fundamental group $\pi_1(\hP)$ of $\hP$ is an infinite, non-affine and irreducible Coxeter group by Proposition \ref{prop:vinberg2}.
Hence, by Theorem 1.1 of Qi \cite{Qi07}, the center of any finite-index subgroup of $\pi_1(\hP)$ is trivial, and so
by Benoist \cite[Theorem 2.2]{Benoist01} (or \cite[Theorem 1.1]{Benoist05}), $D'_{\rm rep}(\hP)$ is a union of components of 
$\Hom(\pi_1(\hP), \SL)$, consisting of dividing discrete faithful representations. 
(For each $h \in D'_{\rm rep}(\hP)$, $\Omega/h(\pi_1(\hP))$ for a properly convex domain $\Omega$ is a compact orbifold
by Benoist \cite{Benoist05}.) 

Now, $D_{\rm rep}(\hP)$ is an open subset of $D'_{\rm rep}(\hP)$ by Koszul \cite{Koszul68}. (See \cite{Choi06}.)  
The subset is closed in the second space: Let $h_i$ be a sequence of representations in $D_{\rm rep}(\hP)$ 
converging to an element $h$ of $D'_{\rm rep}(\hP)$. 
For a set of fundamental generators $r_j$, $j=1, \dots, f$,
$h_i(r_j)$ is a reflection fixing points of a side $F_{j, i}$ of a compact convex polytope $P_i$. 
Let $\Omega_i$ be a properly convex domain in $\PS$ where $h_i(\pi_1(\hP))$ acts as a projective Coxeter group. 
We have 
$h_i(r_j) \ra h(r_j)$ for each $j$ where $h(r_j)$ is a reflection and fixes points of a hyperspace $H_j$.
Since $h(\pi_1(\hP))$, $h\in D'_{\rm rep}(\hP)$, 
acts on a properly convex open domain $\Omega \subset \PS$, each $H_j$ meets $\Omega$. 
Here, $\{H_l\}_{l=1, \dots, f}$ are mutually distinct since otherwise we loose the faithfulness of the action. 

Denote by $H_{l, i}$ the hyperspace in $\PS$ fixed by $h_i(r_l)$.
For a subset $S$ of $\{1, \dots, f\}$, 
let $\Gamma_{S}$ denote the subgroup of $\pi_1(\hP)$ generated by $r_l$ for $l \in S$.
Since $h_i(\pi_1(\hP))$ acts properly discontinuously on $\Omega$,
$h_i(\Gamma_{S})$ is finite if $\bigcap_{l \in S} H_{l, i} \cap \Omega_i \ne \emptyset$.  
The converse is true by Theorem 7 in \cite{Vinberg71}, and the condition also implies $\bigcap_{l \in S} F_{l, i} \ne \emptyset$.
Thus, the combinatorial intersection pattern of $\{H_{l, i} \cap \Omega_i\}_{l=1, \dots, f}$ is the same as 
that of facets $\{F_{l, i}\}_{l=1, \dots, f}$ for $P_i$. (See also Example 7.1.4 of Davis \cite{Davis08}.)
Similarly, $h(\Gamma_{S})$ is finite  if $\bigcap_{l \in S} H_{l} \cap \Omega \ne \emptyset$.  
The converse is also true: we consider the properly convex cone $\Pi^{-1}(\Omega) \cup \{O\}$. Given a linear finite group action on $\Pi^{-1}(\Omega) \cup \{O\}$
with the subspace $P$ of fixed points, $\dim P \geq 1$, we must have $P \cap \Pi^{-1}(\Omega) \ne \emptyset$.
Hence, $\bigcap_{l \in S} H_{l} \cap \Omega_i \ne \emptyset$. 
We thus have 
\begin{equation} \label{eqn:ints}
\bigcap_{l \in J} F_{l, i}  \ne \emptyset \hbox{ if and only if } \bigcap_{l \in J} H_{l, i} \cap \Omega_i \ne \emptyset
\hbox{ if and only if }\bigcap_{l \in J} H_{l} \cap \Omega \ne \emptyset
\end{equation} 
for any subset $J$ of $\{1, \dots, f\}$.

We may also assume that the sequence $\{\Clo_i\}$ of the closures  of $\Omega_i$ geometrically converges to 
a compact convex set $K$ by choosing a subsequence. (See Propositions 2.8 and 2.10 of \cite{Choi99}.) 
$K$ is properly convex and has nonempty interior since otherwise $h$ is reducible.  (See Lemma 1 of \cite{cg} and what follows.) 
We may identify $K=\Clo$. Also, we assume that 
$\{P_i\}$ geometrically converges to a compact convex set $P'$ in $\PS$.
Hence, $P'$ is properly convex since $P' \subset \Clo$. 
By taking a subsequence if necessary, we may assume that each sequence $\{S_i\}$ of sides of $P_i$ geometrically 
converges to a compact convex subset $S_\infty$ of $\PS$. If $\dim S_i = 0$ for all $i$, then $S_\infty \in \Omega$ by equation \eqref{eqn:ints}.
Since $S_\infty$ is a properly convex set, we can deduce that $S_\infty$ is the convex hull of its vertices. 
Since $\Omega$ is properly convex also, we obtain $S_\infty \subset \Omega$. 
Any sequence $\{(S_i, T_i)\}$ of disjoint pairs of sides of $P_i$ geometrically converges to a disjoint pair of subsets by equation \eqref{eqn:ints}.
We deduce that $\bigcap_{l \in J} F_{l, 1}  \ne \emptyset$ if and only if $\bigcap_{l \in J} F_{l, \infty}  \ne \emptyset$.
The facets of $P'$ have the same intersection pattern as $\{H_{l, 1} \cap \Omega_1\}_{l=1, \dots, f}$. 
Also, $P' \cap \Omega = P'$ since otherwise $\Omega/h(\pi_1(\hP))$ is not compact. 
Hence $h(\pi_1(\hP))$ is a projective Coxeter group based on $P'$. 


By Lemma \ref{lem:properfree}, the conjugation action by $\PGL$ is proper and free.
This proves the first item. 


The holonomy homomorphism $h$ is in $D_{\rm rep}(\hP)$ by Proposition \ref{prop:devCox}.
By Theorem 1 of \cite{Choi04} and the first item, 
the map from a real projective structure to its holonomy homomorphism induces a local homeomorphism
\begin{displaymath}
\hol \co \De({\hP}) \ra D_{\rm rep}(\hP)/\PGL.
\end{displaymath}

Now we show that $\hol$ is injective: 
Suppose that $\Omega_k$  for each $k=1, 2$  is a properly convex open subset of $\PS$
on which $\Gamma :=h(\pi_1(\hP))$ acts for $h \in  D_{\rm rep}(\hP)$ as a dividing projective Coxeter group.
Let $\tilde \Gamma$ denote the torsion-free finite index subgroup by Selberg's lemma. 

If $\Omega_1 \cap \Omega_2 \ne \emptyset$, then  $\Omega' = \Omega_1 \cap \Omega_2$ is a connected properly convex open domain
where $\Gamma$ acts properly discontinuously. 
Each map $\Omega'/\tilde \Gamma \ra \Omega_k/\tilde \Gamma, k=1, 2$  of closed manifolds
is surjective by a homology theory since both are $K(\tilde \Gamma, 1)$-spaces.
This implies that $\Omega_1 = \Omega_2$ or $\Omega_1 \cap \Omega_2 = \emptyset$ (See the proof of Proposition 2.2 of \cite{Cooper10}).
Since the antipodal map ${\mathcal{A}}\co \PS \ra \PS$ conjugates from $h(\pi_1(\hP))$ to itself,
$\Omega_2 = {\mathcal{A}}(\Omega_1)$ or $\Omega_2 \cap {\mathcal{A}}(\Omega_1) = \emptyset$ by the same reasoning.

Assume that $\Omega_1 \cap \Omega_2 = \emptyset$ and $\Omega_2 \cap {\mathcal{A}}(\Omega_1) = \emptyset$. 
By Benoist \cite[Proposition 1.1]{Benoist00}, $\Gamma$ contains an element $\gamma$
with an attracting fixed point $y$ in the boundary of $\Omega_1$ so that 
the eigenvalue of the vector in the direction of $y$ has a norm strictly greater than those of all other eigenvalues.
$\gamma$ acts on a great $(n-1)$--sphere $S$ whose complement contains $y$. The pair $y$ and its antipode $y_-$ are the unique attracting fixed points of 
the components of $\PS - S$ containing them respectively.
We can choose a point $z$ in $\Omega_2 - S$. As $m \ra \infty$, the sequence $\gamma^m(z)$ converges to $y$
or $y_-$. Thus, $y \in \Clo_2 \cap \Clo_1 \ne \emptyset$ or $y_- \in \Clo_2 \cap \mathcal{A}(\Clo_1) \ne \emptyset$. 
The nonempty set gives a $\Gamma$--invariant convex subset of dimension $< n$;
however, $h$ is irreducible by Proposition \ref{prop:vinberg2}. This is a contradiction.

Therefore, we conclude $\Omega_2 = \Omega_1$ or $\Omega_2 = {\mathcal{A}}(\Omega_1)$.
Hence, $\Omega_2/\Gamma = \Omega_1/\Gamma$ or $\mathcal{A}$ induces
a projective diffeomorphism:  $\Omega_2/\Gamma \ra \Omega_1/\Gamma$.
This proves the injectivity of $\hol$.

The surjectivity of $\hol$ is shown as follows: 
By definition, each element of $D_{\rm rep}(\hP)$ acts cocompactly on a properly convex open subset of $\PS$ 
as a projective Coxeter group. We now show that the quotient orbifold is diffeomorphic to $\hP$.
Consider $\hP_h: = \Omega/h(\pi_1(\hP))$ for $h$ in $D_{\rm rep}(\hP)$.
 Since $h(\pi_1(\hP))$ is isomorphic to $\pi_1(\hP)$, 
by Charney and Davis \cite{Charney00}, the Coxeter diagrams are the same for the two groups, 
and a properly convex fundamental domain $F_h$ of $\hP_h$ has the same facial incident relation
as that of a properly convex fundamental domain $F$ of $\hP$.  
(See Davis \cite[Section 13.1]{Davis08} also.)
By Davis \cite[Corollary 1.3]{Davis2013},  
$\hP_h$ is diffeomorphic to $\hP$. 
(See also Wiemeler  \cite[Corollary 5.3]{Wiemeler}.)
Therefore, $\hol$ is a homeomorphism as $\hol$ is a local homeomorphism.


The third item was proved while proving the second one.

\end{proof}


The following lemma is a generalization of Lemma 1 of \cite{Choi06}.

Let $R(n+1)$ denote the subspace $\SL$ of all reflections. 
For each element $g\in\SL$, let $[g] \in \PGL$ denote the corresponding element. 
\begin{lemma}\label{lem:properfree}
Let $\mathcal U \subset R(1+n)^f$ denote the subspace of all $(g_1, \ldots, g_f)$ 
generating an irreducible dividing projective Coxeter group $\Gamma$. 
Then the $\PGL$--action on $\mathcal U$ by conjugation
\begin{equation*}
[g] \circ (g_1, \ldots, g_f) = (g g_1 g^{-1}, \ldots, g g_f g^{-1}), g \in \SL
\end{equation*}
is proper and free.
\end{lemma}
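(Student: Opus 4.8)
The plan is to establish properness and freeness separately, exploiting the fact that the group $\Gamma$ generated by $(g_1, \ldots, g_f)$ is irreducible and acts cocompactly on a properly convex open domain $\Omega$. For \emph{freeness}, suppose $[g] \in \PGL$ fixes a tuple $(g_1, \ldots, g_f) \in \mathcal U$, so that $g g_i g^{-1} = g_i$ for every $i$, that is, $g$ commutes with every generator and hence lies in the centralizer of $\Gamma$ in $\SL$. Since $\Gamma$ is irreducible, Schur's lemma forces any element commuting with all of $\Gamma$ to be a scalar multiple of the identity, i.e. $\pm I_V$ (the only scalars in $\SL$). But $[{\pm I_V}]$ is the identity in $\PGL$, so $[g]$ is trivial. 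This gives freeness.

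For \emph{properness}, I would use the convex domain $\Omega$ directly. The standard criterion is that the action is proper if, whenever $(g_1^{(k)}, \ldots, g_f^{(k)}) \to (g_1, \ldots, g_f)$ in $\mathcal U$ and $[h_k] \circ (g_1^{(k)}, \ldots, g_f^{(k)}) \to (g_1', \ldots, g_f')$ for a sequence $[h_k] \in \PGL$, then $[h_k]$ has a convergent subsequence. The key leverage is that each generator is a reflection determined by a fixed hyperplane and a reflection vector, and the domain $\Omega$ on which $\Gamma$ acts dividingly is, by Theorem \ref{thm:defrepspaces}, canonically attached to the tuple (up to the antipodal map $\mathcal A$). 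First I would observe that the limit tuple $(g_1', \ldots, g_f')$ again generates an irreducible dividing reflection group with its own domain $\Omega'$, and that $[h_k]$ conjugates the domain $\Omega_k$ associated to the $k$th tuple toward $\Omega'$. Picking a fundamental domain $F_k \subset \Omega_k$ converging to a fundamental domain $F$, and a compactly embedded frame of $n+2$ points in general position inside these fundamental domains, the convergence of the conjugated tuples pins down the images of this frame under $h_k$ up to bounded error; since a projective transformation is determined by the images of a projective frame, this yields a convergent subsequence of $[h_k]$.

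The main obstacle will be the properness argument, specifically controlling the sequence $[h_k]$ without a priori compactness. The delicate point is that convergence of the conjugated reflections $h_k g_i^{(k)} h_k^{-1}$ controls $[h_k]$ only modulo the centralizer of $\Gamma$, and one must rule out the possibility that $[h_k]$ escapes to infinity while the conjugated generators still converge. This is exactly where irreducibility re-enters: because the limiting generators $g_i'$ have no common invariant proper subspace, the projective frame argument above leaves no room for a diverging subsequence. I would make this precise by fixing $n+2$ points in general position in the interior of the common convex core and tracking their images, invoking the uniqueness of the invariant properly convex domain (the third item of Theorem \ref{thm:defrepspaces}) to guarantee that the frames genuinely converge rather than degenerate onto a subspace.

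Concretely, I expect the writeup to mirror the proof of Lemma 1 of \cite{Choi06}, of which this is a generalization: the irreducibility hypothesis, now guaranteed by Proposition \ref{prop:vinberg2}, replaces whatever lower-dimensional or ad hoc argument appeared there, and the dividing action supplies the compact quotient needed to extract convergent fundamental domains. The freeness half is short and purely algebraic; the properness half is where the geometric input—the uniqueness and continuity of the associated convex domains—does the real work.
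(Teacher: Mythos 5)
Your properness discussion is consistent with the paper, which disposes of that half in one line by saying it ``directly generalizes that of Lemma 1 of \cite{Choi06}'' using irreducibility; your sketch via convergent fundamental domains and projective frames is in the same spirit, and that is not where the problem lies.

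The gap is in your freeness argument. Over $\R$, Schur's lemma does \emph{not} say that the centralizer of an irreducible group is scalar: it says the commutant is a finite-dimensional division algebra over $\R$, hence isomorphic to $\R$, $\mathbb{C}$ or $\mathbb{H}$, and in the latter two cases there are non-scalar elements of determinant $\pm 1$ commuting with the whole group (think of a complex structure $J$ with $J^2=-I_V$). So the sentence ``Schur's lemma forces any element commuting with all of $\Gamma$ to be $\pm I_V$'' is not a valid step for a representation that is only known to be irreducible over $\R$; you need additional input to rule out commutant $\mathbb{C}$ or $\mathbb{H}$. The paper supplies exactly this, and it is why its freeness proof is longer than yours: if $\tilde g$ commutes with each reflection $g_i$, then $\tilde g$ preserves each fixed hyperplane and each pair of antipodal fixed points of $g_i$, hence permutes the vertices of the compact properly convex fundamental polytope $P$ together with their antipodes; since $P$ has $n+1$ vertices in general position, $\tilde g$ is diagonalizable with real eigenvalues, and if it had more than one distinct eigenvalue each reflection vector would lie in a single eigenspace, making the eigenspace decomposition $\Gamma$--invariant and contradicting irreducibility. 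Hence $\tilde g=\pm I_V$. The defect in your write-up is fixable --- for instance, note that $\tilde g$ preserves each reflection line $\R b_i$, that these lines span $V$ by irreducibility, and that every eigenspace of $\tilde g$ is $\Gamma$--invariant because $\tilde g$ centralizes $\Gamma$; or invoke the existence of a proximal element in a dividing group to upgrade real irreducibility to absolute irreducibility --- but some such argument must be made explicit, since irreducibility plus Schur alone does not close the case.
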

\begin{proof}
The proof for the properness directly generalizes that of Lemma 1 of \cite{Choi06}
as the group $\Gamma$ is irreducible. 

Suppose that an element $\tilde g$ of $\SL$ satisfies $\tilde g g_i = g_i \tilde g$ for $i=1, \dots, f$. 
We have a compact properly convex polytope $P$ as a properly convex fundamental domain of $\Gamma$ since $\Gamma$ is a dividing projective Coxeter group.
Choosing generators differently if necessary, we may assume without loss of generality that 
each side $S_i$ of $P$ is fixed by $g_i$ for $i=1, \dots, f$.
Since $\tilde g$ commutes with $g_i$, $\tilde g$ acts on the subspace $S'_i \subset \PS$ containing $S_i$ 
and each pair $\{r_i, \mathcal{A}(r_i)\}$ of antipodal fixed points of $g_i$.
Therefore, $\tilde g$ acts on $\{v_1, \dots, v_m, \mathcal{A}(v_1), \dots, \mathcal{A}(v_m)\}$ for vertices $v_1, \dots, v_m$ of $P$. 
As $P$ has $n+1$ vertices in a general position, 
$\tilde g$ is diagonalizable over $\R$. 
Since $\tilde g: V \ra V$ is a $\Gamma$-module morphism, 
we obtain $\tilde g = \lambda I_V$ for $\lambda = \pm 1$ by Schur's Lemma for $\R$. 
\end{proof}

One related question is: 
\begin{question}
Can $\De(\hP)$ for a compact hyperbolic Coxeter orbifold $\hP$ be compact and have dimension $\geq 1$?
\end{question}
This question was first asked by Benoist in 2005 as far as the authors know (see \cite{Marquis10} for examples).

\subsection{The reinterpretations of the deformation spaces as solution spaces}
\label{subs:VinbergEq}

Let $V$ be an $(n+1)$--dimensional real vector space. Denote by $\M_{s\times t}(\R)$ the set of $s \times t$ matrices with real entries.
We will identify $V$ and $V^*$ with $\M_{(n+1)\times 1}(\R)=\R^{n+1}$ and $\M_{1\times (n+1)}(\R)=(\R^{n+1})^*$ respectively as follows:
we choose the standard basis $\{e_1, \dotsc, e_{n+1}\}$ of $V$. Let $\{e_1^*, \dotsc, e_{n+1}^*\}$ be its dual basis of $V^*$.
If $\alpha_i = \alpha_{i,1}e_1^* + \dotsm + \alpha_{i,n+1}e_{n+1}^* \in V^*$, then  $\alpha_i$ is identified with
the $1 \times (n+1)$ matrix $(\alpha_{i,1}, \dotsc, \alpha_{i,n+1})$.
Similarly, if $b_j = b_{j,1}e_1 + \dotsm + b_{j,n+1}e_{n+1} \in V$, then
$b_j$ is identified with the $(n+1) \times 1$ matrix $(b_{j,1},\dotsc,b_{j,n+1})^t$,
where a matrix $A^t$ means the transpose of a matrix $A$.
Hence $\alpha_i(b_j)=\alpha_i b_j$ where the right-hand side is the scalar obtained as
the matrix product of a $1 \times (n+1)$ matrix with an $(n+1) \times 1$ matrix.
Denote by $I_{n+1}$ the $(n+1)\times (n+1)$-identity matrix. With this matrix notation, 
a reflection $R$ is of form $I_{n+1}-b\alpha$ for $\alpha \in V^*$ and $b \in V$ with $\alpha b =2$.

Let $\hP$ be a compact Coxeter $n$--orbifold with the fundamental chamber
a properly convex $n$--polytope $P$ with $f$ facets in $\PS$, and let $\I_{\hP} = \{ 1,\ldots, f \}$ be the index set of the facets. 
The orbifold structure of $\hP$ gives us 
the order $n_{ij}$ of the ridge $F_i \cap F_j$, $i, j \in \I_{\hP}, i \ne j$. 
Let $P$ be given by a system of linear inequalities $\alpha_i \geq 0$ ($i \in \I_{\hP}$) 
for $\alpha_i$  in $V^*$. 
Let $b_i$ be a vector with $\alpha_i b_i=2$ for each $i$, and 
let $R_i$ be the reflection $I_{n+1} - b_i\alpha_i$ for each $i \in \I$, 
and let $\Gamma \subset \SL$ be the group generated by the reflection $R_i$. 
\begin{align*}
\tag*{\text{Define}} E_{1, \hP} & = \{ (i,j)\in \I_{\hP}\times \I_{\hP} \,|\, i=j \}, \\
E_{2, \hP} & = \{ (i,j)\in \I_{\hP}\times \I_{\hP} \,|\, i<j, \text{ $F_i$ and $F_j$ are adjacent in $P$ and $n_{ij}=2$} \}, \\
E_{3, \hP} & = \{ (i,j)\in \I_{\hP}\times \I_{\hP} \,|\, i<j, \text{ $F_i$ and $F_j$ are adjacent in $P$ and $n_{ij} \geq 3$} \} \text{ and }\\
E_{4, \hP} & = \{ (i,j)\in \I_{\hP}\times \I_{\hP} \,|\, i<j, \text{ $F_i$ and $F_j$ are not adjacent in $P$} \}.
\end{align*}

Vinberg's result leads us to solve the following system of polynomial equations:
\begin{itemize}
\item $a_{ii} = \alpha_i b_i = 2$ for $(i,i) \in E_{1, \hP}$.
\item $a_{ij}=\alpha_i b_j= 0$ and $a_{ji}=\alpha_j b_i= 0$ for $(i,j) \in E_{2, \hP}$.
\item $a_{ij}a_{ji}= \alpha_i b_j \alpha_j b_i=4 \cos^2 \left( \frac{\pi}{n_{ij}} \right)$ for $(i,j) \in E_{3, \hP}$.
\end{itemize}
We call these polynomial equations \emph{Vinberg's equations}. The $\alpha_i$'s and $b_i$'s are \emph{variables}.
Denote by $e$ the number of ridges and $e_2$ the number of ridges of order 2. 
$N_{\hP} = f + e + e_2$ is the number of Vinberg's equations. 
Let  $\{\Phi_k\}_{k=1}^{N_{\hP}}$ be the set of polynomials
in Vinberg's equations, and
let $\Phi_{\hP} \co (V^*)^f \times V^f \rightarrow \mathbb{R}^{N_{\hP}}$
be the map 
\begin{equation*}
(\alpha_1, \dotsc, \alpha_f, b_1, \dotsc, b_f) \mapsto (\Phi_1, \ldots, \Phi_{N_{\hP}}).
\end{equation*}
Let $\mathbb{R}_+$ be the set of positive real numbers. 
Denote by 
\[\theta: \widetilde{G}:=\mathbb{R}_+^f \times \SL \times (V^*)^f \times V^f \ra  (V^*)^f \times V^f \]
the action given by
\begin{multline}\label{eqn:theta}
(d_1,\dotsc, d_f, g) \cdot (\alpha_1, \dotsc, \alpha_f, b_1, \dotsc, b_f)  \\
= (d_1 \alpha_1 g^{-1}, \dotsc, d_f \alpha_f g^{-1}, d_1^{-1} g b_1, \dotsc, d_f^{-1} g b_f),
\end{multline}
where $d_i \in \mathbb{R}_+$ for each $i \in \I_{\hP}$ and $g \in \SL$.
Then we have the invariance
\begin{equation} \label{eqn:phiinv}
\Phi_{\hP} \circ \theta(d_1, \dots, d_f, g) = \Phi_{\hP}.
\end{equation}
Define an open set
\begin{align} \label{eqn:UP}
 \mathcal{U}_{\hP} = & \{ (\alpha_1, \dotsc, \alpha_f, b_1, \dotsc, b_f) \in (V^*)^f \times V^f \,|\,   
 \exists \vec{v} \ne O, \alpha_i(\vec{v}) > 0 \hbox{ for each } i, \nonumber \\
&  \langle \alpha_1, \dots, \alpha_f \rangle = V^*, 
 a_{ij} < 0 \text{ and } a_{ji} < 0 \text{ if } (i,j) \in E_{3, \hP} \cup E_{4, \hP}, \nonumber\\ 
& \text{ and } a_{ij}a_{ji} > 4 \text{ if } (i,j) \in E_{4, \hP}
\}
\end{align}
where we replaced the condition (L2) with (L2)$'$.

We define the solution set
\begin{displaymath}
\widetilde{\De}(\hP):= \Phi_{\hP}^{-1}(0) \cap \mathcal{U}_{\hP}\, .
\end{displaymath}

By invariance, $\widetilde{G}$ acts on $\mathcal{U}_{\hP}$ and on $\widetilde{\De}(\hP)$.
Applying the action $\theta(d_i, g)$ on $\widetilde{\De}(\hP)$, we have
\begin{displaymath}
I_{n+1} - (d_i^{-1} gb_i) \,   (d_i\alpha_i g^{-1}) = g (I_{n+1} - b_i \alpha_i)g^{-1} = g R_i g^{-1} \text{ for  } i\in \I_{\hP}.
\end{displaymath}
Hence the action $\theta(d_1, \dots, d_f, g)$ on $\widetilde{\De}(\hP)$ corresponds to the conjugation in $\SL$.

Define $\mathcal M$ as the submanifold of $(V^*)^f \times V^f $ of
elements $(\alpha_1, \dots, \alpha_f, b_1, \dots, b_f)$ where $\alpha_i b_i = 2$ for every $i=1,\dots, f$.
Define the map
\[ {\mathcal{I}'}_{\SL}\co {\mathcal M} \rightarrow R(n+1)^f \]
by sending  $(\alpha_1, \dots, \alpha_f, b_1, ..., b_f)$ to $(r_1, \dots, r_f)$ given by 
\[r_i(\cdot) =I_V- \alpha_i(\cdot) b_i\co  V \ra V \hbox{ for each } i=1, \dots, f.\]
The map sends the information on the reflection subspace and the vertex to the reflection itself.
Since a reflection is determined by the fixed-point subspace and the antipodal fixed point, 
the group $\mathbb{R}_+^f \times \{\pm I_V\}^f$ acts simply transitively on the fibers of ${\mathcal{I}'}_{\SL}$.
Therefore, 
we obtain a principal fibration
\begin{align} \label{eqn:fib}
\mathbb{R}_+^f \times \{\pm I_V\}^f \longrightarrow  & \quad {\mathcal M}  \nonumber \\
&\quad  \mbox{ \Large $\downarrow$ } \mbox{ \small ${\mathcal{I}'}_{\SL}$ }    \nonumber \\
&\quad  R(n+1)^f. 
\end{align}

\begin{theorem}\label{thm:identification}
Let $\hP$ be a compact Coxeter $n$--orbifold.
Assume that $\hP$ admits a real projective structure, but does not admit a spherical or Euclidean structure.
We consider the solution set
\begin{displaymath}
\widetilde{\De}(\hP):= \Phi_{\hP}^{-1}(0) \cap \mathcal{U}_{\hP} \subset {\mathcal M}
\end{displaymath}
for Vinberg's equations $\Phi_{\hP}$.
\begin{itemize}
\item There exists a $\PGL$--equivariant surjective map
\[{\mathcal{I}}\co  \widetilde{\De}(\hP)/(\mathbb{R}_+^f \times \{\pm I_V\}) \ra D_{\rm rep}(\hP). \]
\item $D_{\rm rep}(\hP)$ is homeomorphic to $\widetilde{\De}(\hP)/(\mathbb{R}_+^f \times \{\pm I_V\})$.
\item The deformation space $\De(\hP)$ of real projective structures on the Coxeter orbifold $\hP$ is homeomorphic to
a union of components of
\begin{displaymath}
\widetilde{\De}(\hP)/\widetilde{G} = D_{\rm rep}(\hP)/\PGL \hbox{ where } \widetilde{G}=\mathbb{R}_+^f \times \SL.
\end{displaymath}
\end{itemize}
\end{theorem}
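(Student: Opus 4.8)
The plan is to construct $\mathcal{I}$ directly from reflections and then match it, level by level, against the identification in Theorem \ref{thm:defrepspaces}. First I would fix a Coxeter presentation $\langle \rho_1, \dots, \rho_f \mid (\rho_i\rho_j)^{n_{ij}}\rangle$ of $\pi_1(\hP)$ with the $n_{ij}$ read off from the ridges of $P$. Given a solution $(\alpha_1,\dots,\alpha_f,b_1,\dots,b_f)\in\widetilde{\De}(\hP)$, set $r_i = I_V - b_i\alpha_i$ and define $h$ by $h(\rho_i)=r_i$. The content of $\widetilde{\De}(\hP)=\Phi_{\hP}^{-1}(0)\cap\mathcal{U}_{\hP}$ is precisely Vinberg's conditions (L1) and (L2)$'$, so Theorem \ref{thm:vinbergM} guarantees that the $r_i$ generate a projective Coxeter group with fundamental chamber $P=\{\alpha_i\ge 0\}$; in particular $a_{ij}a_{ji}=4\cos^2(\pi/n_{ij})$ forces $r_ir_j$ to be a rotation of angle $2\pi/n_{ij}$, so the relations $(r_ir_j)^{n_{ij}}=1$ hold and $\rho_i\mapsto r_i$ extends to a homomorphism $h\colon\pi_1(\hP)\to\SL$. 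This assignment is constant on $\mathbb{R}_+^f\times\{\pm I_V\}$--orbits, since the scalings in \eqref{eqn:diagonal} and the central element $-I_V$ (acting by $\theta(-I_V)\colon(\alpha_i,b_i)\mapsto(-\alpha_i,-b_i)$) leave every $r_i$ unchanged, so $\mathcal{I}$ descends to the quotient; and the computation preceding the theorem, $I_{n+1}-(d_i^{-1}gb_i)(d_i\alpha_ig^{-1})=gR_ig^{-1}$, shows that $\theta$ intertwines the $\SL$--action with conjugation, so $\mathcal{I}$ is $\PGL$--equivariant.

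Next I would verify that $\mathcal{I}$ lands in $D_{\rm rep}(\hP)$ and is onto it. Since $\hP$ is compact, $\Gamma=h(\pi_1(\hP))$ is perfect, and since $\hP$ admits neither a spherical nor a Euclidean structure, $\Gamma$ is neither elliptic nor parabolic; Proposition \ref{prop:vinberg2} then makes $\Gamma$ irreducible of negative type with $\Omega_\Gamma$ properly convex, while Theorem \ref{thm:vinberg0} makes $\Gamma$ discrete and properly discontinuous with compact quotient $\mathring{\Omega}_\Gamma/\Gamma$. Thus $\Gamma$ is a dividing reflection group and $h$ is a discrete faithful representation, i.e. $h\in D_{\rm rep}(\hP)$. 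For surjectivity, given $h\in D_{\rm rep}(\hP)$, Proposition \ref{prop:devCox} supplies a properly convex fundamental polytope whose facets are fixed by the reflections $h(\rho_i)$; writing each as $I_V-b_i\alpha_i$ with $\alpha_ib_i=2$ and choosing signs so that $P=\{\alpha_i\ge0\}$ produces a point of $\Phi_{\hP}^{-1}(0)$. It lies in $\mathcal{U}_{\hP}$ because $P$ is bounded, which forces $\langle\alpha_1,\dots,\alpha_f\rangle=V^*$, and because $\Gamma$ is non-parabolic, so Proposition \ref{prop:parabolic2} upgrades (L2) to the open condition (L2)$'$. Hence $\mathcal{I}$ is a $\PGL$--equivariant surjection, which is the first item.

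The heart of the argument is the second item, that $\mathcal{I}$ is a homeomorphism. For injectivity, two solutions with the same image share all reflections $r_i$, hence all fixed hyperplanes $\ker\alpha_i$; by the principal fibration \eqref{eqn:fib} they differ by an element of $\mathbb{R}_+^f\times\{\pm I_V\}^f$. The key point is sign rigidity: a sign change on a proper nonempty subset of the indices would require a vector $\vec v$ lying in a chamber of the reflection arrangement different from both $P$ and its antipode $\mathcal{A}(P)=-P$, which is incompatible with the condition in $\mathcal{U}_{\hP}$ that some $\vec v$ satisfy $\alpha_i(\vec v)>0$ for all $i$ simultaneously; only the all--plus sheet $P$ and the all--minus sheet $\mathcal{A}(P)$ survive, and these are exchanged by the diagonal $-I_V$. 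Therefore the fibers of $\mathcal{I}$ inside $\widetilde{\De}(\hP)$ are single $\mathbb{R}_+^f\times\{\pm I_V\}$--orbits, giving injectivity. To promote the continuous bijection to a homeomorphism I would restrict the locally trivial bundle ${\mathcal{I}'}_{\SL}$ to $D_{\rm rep}(\hP)\subset R(n+1)^f$: over it $\widetilde{\De}(\hP)$ is exactly the union of the two admissible sign sheets, so a local section of \eqref{eqn:fib} furnishes a continuous local inverse to the descended map, which is therefore open. Properness and freeness of the ambient $\PGL$--action from Lemma \ref{lem:properfree} ensure the quotients are Hausdorff. This yields $D_{\rm rep}(\hP)\cong\widetilde{\De}(\hP)/(\mathbb{R}_+^f\times\{\pm I_V\})$.

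Finally, for the third item I would pass to the residual action. On the quotient $\widetilde{\De}(\hP)/(\mathbb{R}_+^f\times\{\pm I_V\})$ the remaining $\SL$--factor acts with the central $\{\pm I_V\}$ already divided out, i.e. through $\PGL=\SL/\{\pm I_V\}$, and under $\mathcal{I}$ it is conjugation; hence $\widetilde{\De}(\hP)/\widetilde{G}=D_{\rm rep}(\hP)/\PGL$ with $\widetilde{G}=\mathbb{R}_+^f\times\SL$. Theorem \ref{thm:defrepspaces} identifies $\De(\hP)$ with $D_{\rm rep}(\hP)/\PGL$ and exhibits $D_{\rm rep}(\hP)$ as a union of components of $\Hom(\pi_1(\hP),\SL)$, which gives the stated identification of $\De(\hP)$ with a union of components of $\widetilde{\De}(\hP)/\widetilde{G}$. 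I expect the main obstacle to be the sign--rigidity step together with the passage from a continuous bijection to a homeomorphism, since both rest on controlling exactly which sign sheets of the fiber \eqref{eqn:fib} are compatible with the convexity constraints defining $\mathcal{U}_{\hP}$.
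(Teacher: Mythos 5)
Your overall route coincides with the paper's (which is much terser): build $\mathcal{I}$ from the reflection data via Theorem \ref{thm:vinbergM}, get surjectivity from Proposition \ref{prop:devCox} together with Proposition \ref{prop:parabolic2}, identify the fibers with $\mathbb{R}_+^f\times\{\pm I_V\}$--orbits using the principal fibration \eqref{eqn:fib}, and quote Theorem \ref{thm:defrepspaces} for the last item. The one step where your justification fails is precisely the one you single out as the key point: sign rigidity. You argue that a sign flip on a proper nonempty subset $S$ of the indices is impossible because the corresponding region $\{\vec v \,:\, \epsilon_i\alpha_i(\vec v)>0\}$ would be a chamber of the arrangement other than $\mathrm{cone}(P)^{\circ}$ or its antipode, and you assert such chambers are incompatible with the existence condition in \eqref{eqn:UP}. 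That assertion is false in general: for a compact hyperbolic Coxeter simplex (for instance the tetrahedron of Figure \ref{fig:tetrahedron}) the $\alpha_i$ form a basis of $V^*$ and every one of the $2^{n+1}$ sign chambers is a nonempty orthant, so the clause ``$\exists \vec v,\ \alpha_i(\vec v)>0$ for each $i$'' excludes no mixed sign pattern whatsoever.

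What actually kills the mixed patterns is the other clause of \eqref{eqn:UP}: $a_{ij}<0$ and $a_{ji}<0$ for $(i,j)\in E_{3,\hP}\cup E_{4,\hP}$. Flipping signs on $S$ sends $a_{ij}$ to $\epsilon_i\epsilon_j a_{ij}$, so staying in $\mathcal{U}_{\hP}$ forces $a_{ij}=0$ for every $i\in S$, $j\notin S$, i.e. the Cartan matrix decomposes as $A_S\oplus A_{S^c}$. Since $\hP$ admits no spherical or Euclidean structure, $\Gamma$ is neither elliptic nor parabolic, and Proposition \ref{prop:vinberg2} makes $A$ indecomposable; hence $S=\emptyset$ or $S=\I_{\hP}$, which is exactly the diagonal $\{\pm I_V\}$. (This is the same indecomposability argument the paper deploys in Lemma \ref{lem:manifold} to conclude $d_1=\dotsm=d_f$.) With this substitution your proof goes through: the construction and equivariance of $\mathcal{I}$, the surjectivity, the local-section argument upgrading the bijection to a homeomorphism, and the reduction of the third item to Theorem \ref{thm:defrepspaces} are all correct and consistent with the paper's (largely implicit) proof.
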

\begin{proof}
The conditions of Equation \eqref{eqn:UP} imply that we have a nontrivial properly convex polytope as a fundamental chamber.
Vinberg's equation, Theorem \ref{thm:vinbergM},
and Proposition \ref{prop:vinberg2} imply that the image points are discrete faithful dividing reflection representations
$\pi_1(\hP) \ra \SL$. 

Conversely, the collection of reflections generating the discrete faithful dividing reflection representation
gives some point in $\widetilde{\De}(\hP)$, ie in $\Phi_{\hP}^{-1}(0) \cap \mathcal{U}_{\hP}$, 
 since it satisfies (L1) and (L2)$'$ as we showed in Section \ref{subs:Vinberg}.
Hence the map is surjective.

A representation given by assigning the fixed points and reflection facets to fundamental generators
has ambiguity understood by Equation \eqref{eqn:theta}.
Thus, the fibers are again given as orbits of $\mathbb{R}_+^f \times \{\pm I_V\}$, and
${\mathcal{I}'}_{\SL}$ restricts to a fibration $\widetilde{\De}(\hP) \ra D_{\rm rep}(\hP)$.
The second item follows.
The third item follows by Theorem \ref{thm:defrepspaces} and the second item.
\end{proof}

Let $\PV(\hP)$ denote the space of $f \times f$ matrix $A=(a_{ij})$ satisfying (L1) and (L2)$'$ with $\rank A = n+1$ and no component of zero type.
We recall from Equation \eqref{eqn:diagonal} that a diagonal matrix group $\mathbb{R}_+^f$ acts on $\PV(\hP)$ by
\begin{equation}\label{eqn:diaga}
(d_1, \ldots, d_f) \circ (a_{ij}) = (d_i d_j^{-1} a_{ij}).
\end{equation}

\begin{corollary} \label{cor:identification}
Let $\hP$ be a compact Coxeter $n$--orbifold.
Assume that $\hP$ admits a real projective structure, but does not admit a spherical or Euclidean structure.
Then there exists a homeomorphism between each pair of the spaces below 
\[\De(\hP) \,\leftrightarrow\, D_{\rm rep}(\hP)/\PGL \,\leftrightarrow\, \widetilde{\De}(\hP)/\widetilde{G}
\,\leftrightarrow\, \PV(\hP)/\mathbb{R}_+^f.\]
\end{corollary}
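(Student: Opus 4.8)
The plan is to note that the first two links of the chain are already in hand and to supply only the last one. The homeomorphism $\De(\hP) \leftrightarrow D_{\rm rep}(\hP)/\PGL$ is the map $\hol$ of Theorem \ref{thm:defrepspaces}, and $D_{\rm rep}(\hP)/\PGL \leftrightarrow \widetilde{\De}(\hP)/\widetilde{G}$ is the identity recorded in the third item of Theorem \ref{thm:identification}. So the genuinely new statement is the homeomorphism $\widetilde{\De}(\hP)/\widetilde{G} \leftrightarrow \PV(\hP)/\mathbb{R}_+^f$, which I would realize through the Cartan matrix map
\[ \mathcal{C}\co \widetilde{\De}(\hP) \ra \PV(\hP), \qquad (\alpha_1, \dots, \alpha_f, b_1, \dots, b_f) \longmapsto (\alpha_i b_j). \]
First I would check that $\mathcal{C}$ is well defined and continuous. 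Continuity is clear since the entries are polynomial. That the image satisfies {\rm (L1)} and {\rm (L2)}$'$ follows from Vinberg's equations together with the defining inequalities of $\mathcal{U}_{\hP}$ in \eqref{eqn:UP}; that $\rank \mathcal{C}(x) = n+1$ with no component of zero type follows from Proposition \ref{prop:vinberg2}, since each point of $\widetilde{\De}(\hP)$ produces a perfect projective Coxeter group that is neither elliptic nor parabolic (as $\hP$ admits no spherical or Euclidean structure), hence has indecomposable Cartan matrix of negative type with $\rank = \dim V = n+1$.

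A short computation with the action $\theta$ of \eqref{eqn:theta} shows $a_{ij} = \alpha_i b_j \mapsto d_i d_j^{-1} a_{ij}$, independent of the $\SL$-factor $g$. Hence $\mathcal{C}$ is equivariant for the projection $\widetilde{G} = \mathbb{R}_+^f \times \SL \ra \mathbb{R}_+^f$ and the diagonal action \eqref{eqn:diaga}, so it descends to a continuous map $\bar{\mathcal{C}}\co \widetilde{\De}(\hP)/\widetilde{G} \ra \PV(\hP)/\mathbb{R}_+^f$. To see $\bar{\mathcal{C}}$ is a bijection I would argue as follows. For injectivity, if $\mathcal{C}(x)$ and $\mathcal{C}(y)$ lie in one $\mathbb{R}_+^f$-orbit, I would first move $x$ by a suitable $\theta(d_1, \dots, d_f, I)$ so that $\mathcal{C}(x') = \mathcal{C}(y)$ exactly. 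Comparing the two realizing tuples of reflections, the rank-one parts $b_i\alpha_i$ force $\alpha_i' = c_i^{-1}\alpha_i g^{-1}$ and $b_i' = c_i g b_i$ for nonzero scalars $c_i$ and some $g \in \SL$ (this is Vinberg's uniqueness, Theorem \ref{thm:Vinberg}); requiring the Cartan entries to agree gives $c_i = c_j$ whenever $a_{ij} \ne 0$, so indecomposability of the Cartan matrix makes all $c_i$ equal to a common scalar, which after absorbing a possible sign into $g$ places $x'$ and $y$ in one $\widetilde{G}$-orbit. Surjectivity is the existence half of Theorem \ref{thm:Vinberg}: each $A \in \PV(\hP)$ is the Cartan matrix of a projective Coxeter group, whose reflection data is a point of $\widetilde{\De}(\hP)$ mapping to $A$.

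Finally, and this is where the real work lies, I would upgrade $\bar{\mathcal{C}}$ to a homeomorphism by building continuous local sections of $\mathcal{C}$. Near a fixed $A_0$ I would choose $n+1$ linearly independent columns of $A_0$ (possible since $\rank A_0 = n+1$) and, for $A$ nearby, factor $A = U(A)\,W(A)$ continuously using the Moore--Penrose pseudo-inverse of those columns; reading the rows of $U(A)$ as the $\alpha_i$ and the columns of $W(A)$ as the $b_j$ gives a continuous tuple with $\alpha_i b_j = a_{ij}$ and $\alpha_i b_i = 2$. The one condition of $\mathcal{U}_{\hP}$ that is not automatic is the existence of $\vec v \ne O$ with $\alpha_i(\vec v) > 0$ for every $i$, and I expect this to be the main obstacle. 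I would dispatch it by observing that any two full-rank factorizations of $A$ differ by a common $g \in \GL$, so this is a property of $A$ alone; since it holds for the factorization coming from the genuine Coxeter group of Theorem \ref{thm:Vinberg}, it holds for ours, and the section lands in $\widetilde{\De}(\hP)$.

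With a local section $s$ in hand, I would finish by a standard quotient argument. Writing $q_1$ and $q_2$ for the two orbit maps, which are open because quotients by continuous group actions are open, the relation $\bar{\mathcal{C}}^{-1}\circ q_2 = q_1 \circ s$ holds over the open image of the neighborhood; since $q_2$ restricts to an open continuous surjection there, this exhibits $\bar{\mathcal{C}}^{-1}$ as continuous, and as such neighborhoods cover $\PV(\hP)/\mathbb{R}_+^f$ the inverse is globally continuous. Composing the three homeomorphisms then yields the stated chain.
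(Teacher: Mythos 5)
Your proposal is correct and follows essentially the same route as the paper: the first two links are quoted from Theorems \ref{thm:defrepspaces} and \ref{thm:identification}, and the last is the Cartan matrix map $(\alpha_i,b_j)\mapsto(\alpha_i b_j)$, inverted by means of the existence and uniqueness halves of Theorem \ref{thm:Vinberg} together with Proposition \ref{prop:vinberg2}. The only difference is that you additionally verify bicontinuity of the induced map on quotients via continuous local sections built from rank factorizations, a point the paper's one-line proof leaves implicit.
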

\begin{proof}
Theorems \ref{thm:defrepspaces} and \ref{thm:identification} give the first and second correspondences.
The map from the second one to the fourth one is obtained by going to the third one and taking
$\alpha_i(b_j)$ as the entries of the Cartan matrices. 
Theorem \ref{thm:Vinberg} and Proposition \ref{prop:vinberg2} give us the map from the fourth one to the second one. 
These maps are inverses of each other by the uniqueness part of Theorem \ref{thm:Vinberg}.
\end{proof}

\section{Real projective structures near the hyperbolic structure}
\label{s:LocalSpaces}

We will obtain the information of real projective structures near the hyperbolic structure in terms of Zariski tangent spaces.

Recall in the previous section that real projective structures in the deformation space of a compact Coxeter orbifold 
$\hP$  correspond to solutions to Vinberg's equations.
In Section \ref{subs:TangentSpace} we study the Zariski tangent space to this solution space.
In Section \ref{subs:HyperbolicEquations} we describe the space of hyperbolic structures of $\hP$ in terms of polynomial equations,
forming so-called hyperbolic equations. 
In Section \ref{subs:TangentToHyperbolic} we study the Zariski tangent space to the solution space of the hyperbolic equations.
We compute the rank of the differential of the polynomial map from the hyperbolic equation in Proposition \ref{prop:ident}.  
In Section \ref{subs:MainLemma} we compare these two Zariski tangent spaces 
and combine this observation with the weak orderability of $\hP$ to prove Lemma \ref{lem:ranksum}, 
computing the rank of the differential of the polynomial map
from Vinberg's equation.  
Finally, in Section \ref{subs:Proof}, we prove the main result Theorem \ref{thm:main1}. 

\subsection{The Zariski tangent space to Vinberg's equations}
\label{subs:TangentSpace}

Let $\hP$ be a Coxeter orbifold based on 
a properly convex $n$--polytope $P$ with $f$ facets in $\PS$, and let $\I_{\hP} = \{ 1,\ldots, f \}$ be the index set of the facets.
Assume that $P$ is given by a system of linear inequalities, $\alpha_i \geq 0$ ($i \in \I_{\hP}$), 
for $\alpha_i \in V^*$.
Suppose that each $b_i$, $i=1, \dots, f$, is a reflection vector with $\alpha_ib_i=2$.

As in Section \ref{subs:VinbergEq}, we have variables $\alpha_i \in V^*=(\mathbb{R}^{n+1})^*$
and $b_i \in V=\mathbb{R}^{n+1}$ for $i \in \I_{\hP}= \{1, \ldots, f\}$, and Vinberg's equations are of the following form:
\begin{itemize}
\item $\Phi_{ii} = \alpha_i b_i - 2 = 0$ for $(i,i) \in E_{1, \hP}$.
\item $\Phi^{[1]}_{ij} = \alpha_i b_j =0$ and $\Phi^{[2]}_{ij} = \alpha_j b_i=0$ for $(i,j) \in E_{2, \hP}$.
\item $\Phi_{ij} = \alpha_i b_j \alpha_j b_i - 4 \cos^2 \left(\frac{\pi}{n_{ij}}\right)$ for  $(i,j) \in E_{3, \hP}$.
\end{itemize}
Recall that $N_{\hP}$ is the number of Vinberg's equations, ie $N_{\hP}=f+e+e_2$.
Let 
\[\pi_i^{[1]} \co (V^*)^f \times V^f \to V^* \hbox{ and } \pi_i^{[2]} \co (V^*)^f \times V^f \to V\] 
denote the projections onto the $i$th factor $V^*$ and the $(f+i)$th factor $V$, for every $i \in \I_{\hP}$, respectively.
For each $(i,j) \in E_{3, \hP}$, the derivative of $\Phi_{ij}$ at $p=(\alpha_1, \dotsc, \alpha_f, b_1, \dotsc, b_f)$, considered as a linear map, is:
\begin{equation*}
\begin{split}
D\Phi_{ij}(\dot p) & = a_{ji}{\dot \alpha_i}{b_j} + a_{ij}{\dot \alpha_j}{b_i} + a_{ij}{\alpha_j}{\dot b_i} + a_{ji}{\alpha_i}{\dot b_j} \\
& = a_{ji}{\pi_i^{[1]} (\dot p)}{b_j} + a_{ij}{\pi_j^{[1]} (\dot p)}{b_i} + a_{ij}{\alpha_j}{\pi_i^{[2]} (\dot p)} + a_{ji}{\alpha_i}{\pi_j^{[2]} (\dot p)}
\end{split}
\end{equation*}
for $\dot p = (\dot \alpha_1, \dotsc, \dot \alpha_f, \dot b_1, \dotsc \dot b_f) \in (V^*)^f \times V^f$, 
and entries $a_{ij}$ of the Cartan matrix of $\hP$.
Similarly, for each $(i,i) \in E_{1, \hP}$\,,
\begin{displaymath}
D\Phi_{ii}(\dot p) = \pi_i^{[1]}(\dot p) b_i + \alpha_i \pi_i^{[2]}(\dot p),
\end{displaymath}
and for each $(i,j) \in E_{2, \hP}$\,,
\begin{displaymath}
D\Phi_{ij}^{[1]}(\dot p) = \pi_i^{[1]}(\dot p) b_j + \alpha_i \pi_j^{[2]}(\dot p) \quad \text{and} \quad
D\Phi_{ij}^{[2]}(\dot p) = \pi_j^{[1]}(\dot p) b_i + \alpha_j \pi_i^{[2]}(\dot p).
\end{displaymath}
More explicitly, combining Vinberg's equations gives a function $\Phi_{\hP} \co (V^*)^f \times V^f \to \R^{N_{\hP}}$ and
the rows of the ${N_{\hP}} \times 2(n+1)f$ Jacobian matrix $[D \Phi_{\hP}]$ are
made up of $(n+1)$--entry blocks. 

For each $(i,i) \in E_{1, \hP}$,
\begin{align} \label{eqn:Phidiff}
[D\Phi_{ii}]   =  &(0, \dotsc, 0, b_{i,1}, \dotsc, b_{i,n+1}, 0, \dotsc, 0, \alpha_{i,1}, \dotsc, \alpha_{i,n+1},0,\dotsc,0) \nonumber \\
= & (0, \dotsc, 0, \quad\:
\underbrace{b_i^t}_{i \text{th block}} \quad\:  ,0, \dotsc, 0, \quad\:  \underbrace{\alpha_i}_{(f+i) \text{th block}} \quad\:  ,0, \dotsc, 0). \nonumber\\
& \hbox{ For } (i,j) \in E_{2, \hP},     \nonumber \\
[D\Phi_{ij}^{[1]}]  = & (0, \dotsc, 0, \underbrace{b_j^t}_{i \text{th}},0, \dotsc, 0, \underbrace{0}_{j \text{th}} ,0, \dotsc, 0, \underbrace{0}_{(f+i) \text{th}},0, \dotsc, 0, \underbrace{\alpha_i}_{(f+j) \text{th}} ,0, \dotsc, 0), \nonumber  \\
[D\Phi_{ij}^{[2]}]  =  & (0, \dotsc, 0, \underbrace{0}_{i \text{th}},0, \dotsc, 0, \underbrace{b_i^t}_{j \text{th}} ,0, \dotsc, 0, \underbrace{\alpha_j}_{(f+i) \text{th}},0, \dotsc, 0, \underbrace{0}_{(f+j) \text{th}} ,0, \dotsc, 0). \nonumber \\
& \hbox{ For  } (i,j) \in E_{3, \hP},    \nonumber \\
[D\Phi_{ij}]  = & (0, \dotsc, 0, \underbrace{a_{ji}b_j^t}_{i \text{th}},0, \dotsc, 0, \underbrace{a_{ij}b_i^t}_{j \text{th}} ,0, \dotsc, 0, \underbrace{a_{ij}\alpha_j}_{(f+i) \text{th}},0, \dotsc, 0, \underbrace{a_{ji}\alpha_i}_{(f+j) \text{th}} ,0, \dotsc, 0).
\end{align}
Suppose that $p$ is a point of $\Phi_{\hP}^{-1}(0)$. Then the \emph{Zariski tangent space at $p$} is the kernel of
the Jacobian matrix $[D \Phi_{\hP}]$ evaluated at $p$.

\subsection{The hyperbolic equations}
\label{subs:HyperbolicEquations}

Let $V$ be an $(n+1)$--dimensional real vector space with coordinate functions $x_1, \ldots, x_{n+1}$, and let $\hP$ be
a compact hyperbolic Coxeter orbifold with the fundamental chamber equal to  
a compact $n$--polytope $P$ in the Klein projective model of the $n$--dimensional hyperbolic space $\mathbb{H}^n$.
Let $P$ have facets $F_i$ for $i \in \I_{\hP}=\{1,2, \dotsc, f\}$.

Denote by $\nu_i \in V$ the inward unit normal to the subspace spanned by vectors in directions of $F_i$
with respect to the Lorentzian inner product on $V$.
Then the system of linear inequalities define $P$
\begin{displaymath}
 \langle \nu_i, x \rangle \geq 0 \text{ for each } i \in \I_{\hP} \quad \text{and} \quad x_1=1.
\end{displaymath}
To construct a hyperbolic Coxeter $n$--polytope $P$
with prescribed dihedral angles $\frac{\pi}{n_{ij}}$,  we need to solve the following equations:
\begin{equation}\label{eqn:hyperbolic}
\begin{split}
\langle \nu_i,\nu_i \rangle & = 1 \;\;\text{for each}\;\; i \in \I_{\hP}, \\
\langle \nu_i,\nu_j \rangle & = -\cos\left(\tfrac{\pi}{n_{ij}}\right) \;\;\text{if facets $F_i$ and $F_j$ are adjacent in $P$}.
\end{split}
\end{equation}
We call these equations \emph{hyperbolic equations}.
To compare the hyperbolic equations with Vinberg's equations, the system of linear inequalities defining $P$ is given by 
\begin{displaymath}
 \alpha_i(x) \geq 0 \text{ for  } i \in \I_{\hP} \quad \text{and} \quad x_1=1, x \in V
\end{displaymath}
where the linear functional $\alpha_i \in V^*$ 
is given by $\alpha_i(v)=2\langle \nu_i, v \rangle$.
The hyperbolic reflection in the facet $F_i$ is a map
\begin{displaymath}
R_i(v) = v - 2\langle \nu_i, v \rangle \nu_i = v - \alpha_{i}(v)b_i
\end{displaymath}
for $b_i=\nu_i$. Thus taking
$\alpha_i = 2 \langle \nu_i, \,\cdot\, \rangle$ and $b_i = \nu_i$ gives
a \emph{hyperbolic point} $t$ in $\Phi_{\hP}^{-1}(0)$ corresponding to the hyperbolic structure on $\hP$:
We rewrite the equation in another way. 
If facets $F_i$ and $F_j$ are adjacent in $P$, then
\begin{gather*}
a_{ij}=\alpha_i(b_j) = 2\langle \nu_i,\nu_j \rangle = -2\cos\left(\tfrac{\pi}{n_{ij}}\right) \\
\tag*{{and thus}} a_{ii} = 2\langle \nu_i,\nu_i \rangle = 2,  (i,i) \in E_{1, \hP},\\
a_{ij} = 0 \,\text{ and }\, a_{ji}=0,   (i,j) \in E_{2, \hP}, \\
a_{ij}a_{ji} = 4\cos^2( \tfrac{\pi}{n_{ij}} ),   (i,j) \in E_{3, \hP}.
\end{gather*}

\subsection{The Zariski tangent space to the hyperbolic equations}
\label{subs:TangentToHyperbolic}

As in Section \ref{subs:HyperbolicEquations}, we assume that $P$ is a compact hyperbolic Coxeter 
$n$--polytope where the dihedral angle at each ridge $F_{ij}=F_i \cap F_j$
equals $\frac{\pi}{{n_{ij}}}$ for an integer $n_{ij} \geq 2$. Constructing such a hyperbolic $n$--polytope $P$
is the same as solving the system
of hyperbolic equations \eqref{eqn:hyperbolic} for the unit normals $\nu_i$.
Equivalently we can write these equations in terms of the reflection vectors $b_i=\nu_i$.
This gives the following system of $m=f+e$  equations:
\begin{equation}\label{eqn:hyperbolic2}
\begin{split}
\Psi_{ii} & = 2\langle b_i, b_i \rangle - 2 = 0 \text{ for  } (i,i) \in E_{1, \hP} \\
\Psi_{ij} & = 2\langle b_i, b_j \rangle + 2\cos\left(\tfrac{\pi}{n_{ij}}\right) = 0 \text{ for } (i,j) \in E_{2, \hP} \cup E_{3, \hP}.
\end{split}
\end{equation}
Combining these gives a function
$\Psi_{\hP}\co  V^f = \R^{(n+1)f} \to \R^m$, and $\Psi_{\hP}^{-1}(0)$
contains Coxeter $n$--polytopes in $\mathbb{H}^n$ with the desired dihedral angles.

We define an open manifold
\begin{align} \label{eqn:UPH}
 \mathcal{W}_{\hP} & := \{ (b_1, \dotsc, b_f) \in V^f |\langle b_i, b_i \rangle = 1,  i\in \I_{\hP},
  \langle b_i, b_j \rangle < -2,  \text{ if } (i,j) \in E_{4, \hP}\}.
\end{align}
The $f$-tuple $(b_1, \dots, b_f)$ of normal vectors to facets for a compact hyperbolic polytope satisfies equations \eqref{eqn:hyperbolic2} and is in 
$\mathcal{W}_{\hP}$ (see \cite{Roeder07}).

Now we compute the
derivative $D\Psi_{\hP}$ at a hyperbolic point $t$. Setting $\alpha_i= 2\langle \nu_i, \,\cdot\, \rangle$, $i=1, \dots, f$, 
to be the linear functionals defining the facets of $P$, we obtain 
\begin{equation*}
D \Psi_{ij} (\dot b) = 2\langle \dot b_i, b_j \rangle + 2\langle  b_i,  \dot b_j \rangle
 = \alpha_j \dot b_i + \alpha_i \dot b_j \,.
\end{equation*}
 for  $\dot b:=(\dot b_1, \dots, \dot b_f) \in V^f,  \dot b_i \in V, i =1, \dots, f.$
For $i=j$, this becomes
\[D\Psi_{ii} (\dot b) = 2 \alpha_i {\dot b_i}, \hbox{ for } \dot b =(\dot b_1, \dots, \dot b_f) \in V^f.\]
Equivalently, the rows of the $m \times (n+1)f$ Jacobian matrix $[D \Psi_{\hP}]$ consist
of blocks, each consisting of $(n+1)$ entries:

For each $(i,i) \in E_{1, \hP}$,
    \begin{equation*}
    \begin{split}
    [D\Psi_{ii}]  & = (0, \dotsc, 0, 2\alpha_{i,1}, \dotsc, 2\alpha_{i,n+1}, 0, \dotsc, 0) \\
    & = (0, \ldots, 0, \quad \quad \: \underbrace{2\alpha_i}_{i \text{th block}} \quad \quad \: ,0, \ldots, 0)
    \end{split}
    \end{equation*}
and for each $(i,j) \in E_{2, \hP} \cup E_{3, \hP}$,
    \begin{equation*}
   [D\Psi_{ij}]  =  (0, \ldots, 0, \underbrace{\alpha_j}_{i \text{th block}} ,0, \ldots, 0, \underbrace{\alpha_i}_{j \text{th block}}, 0, \ldots, 0).
    \end{equation*}
Then the Zariski tangent space to $\Psi_{\hP}^{-1}(0) \cap \mathcal{W}_{\hP}$ at $t$ is $\ker D\Psi_{\hP}$.

Recall that $\Hom(\pi_1(\hP), \PO)$ is an algebraic subset of the space $\PO^f$ for the number 
of fundamental generators $f$, ie the number of facets of $P$.
We give the standard point-set topology as a subspace.

\begin{proposition} \label{prop:ident}
Let $P$ be a compact hyperbolic Coxeter $n$--polytope. Suppose that $\hP$ is the Coxeter orbifold arising from $P$, 
with the associated holonomy representation $h_0$, and let $\bar b_0$ denote the $f$--tuple of vectors normal to the facets of $P$ in
the Lorentzian spaces.
Then
\begin{itemize}
\item The orbit of $h_0$ under $\PO$ contains an open neighborhood of $h_0$
in $\Hom(\pi_1(\hP), \PO)$, and this is a smooth $\tfrac{n(n+1)}{2}$--manifold in a neighborhood of $h_0$.
\item A neighborhood of $\bar b_0$ at $\Psi_{\hP}^{-1}(0)$ is diffeomorphic to
a neighborhood of $h_0$ in the real algebraic set $\Hom(\pi_1(\hP), \PO)$.
\item $\dim \ker D\Psi_{\hP, \bar b_0}=\dim so(1,n) = \tfrac{n(n+1)}{2}$.
\end{itemize}
\end{proposition}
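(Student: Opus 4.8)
The plan is to recognize Proposition \ref{prop:ident} as a reformulation of Weil's infinitesimal rigidity for the cocompact hyperbolic reflection group $\Gamma = h_0(\pi_1(\hP))$, transported from the representation variety $\Hom(\pi_1(\hP), \PO)$ to the normal-vector model $\Psi_{\hP}^{-1}(0)$. The three items are logically linked: the first is local rigidity of $h_0$, the second is a change of coordinates via the reflection parametrization, and the third is an immediate consequence of the first two. I would prove them in this order.

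First I would establish the cohomological facts underlying the first item. Write $\mathfrak{so}(1,n)$ for the Lie algebra of $\PO$, viewed as a $\pi_1(\hP)$--module through $\mathrm{Ad} \circ h_0$. Since $\hP$ is a closed hyperbolic orbifold, $\Gamma$ is a cocompact lattice in $\PO$, and by Proposition \ref{prop:vinberg2} the holonomy $h_0$ is irreducible, so the centralizer of $\Gamma$ in $\mathfrak{so}(1,n)$ is trivial; that is, $H^0(\pi_1(\hP), \mathfrak{so}(1,n)) = 0$. Because $n \geq 3$, Weil's infinitesimal rigidity gives $H^1(\pi_1(\hP), \mathfrak{so}(1,n)) = 0$. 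Now the Zariski tangent space to $\Hom(\pi_1(\hP), \PO)$ at $h_0$ is the cocycle space $Z^1(\pi_1(\hP), \mathfrak{so}(1,n))$, while the tangent space to the $\PO$--orbit of $h_0$ by conjugation is the coboundary space $B^1(\pi_1(\hP), \mathfrak{so}(1,n))$, of dimension $\dim \mathfrak{so}(1,n) - \dim H^0 = \tfrac{n(n+1)}{2}$. The vanishing of $H^1$ forces $Z^1 = B^1$, so $h_0$ is a smooth point of $\Hom(\pi_1(\hP), \PO)$ whose local component is exactly the orbit; since the centralizer of $\Gamma$ in $\PO$ is trivial, this orbit is a smooth manifold of dimension $\tfrac{n(n+1)}{2}$, which is the first item.

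For the second item I would use the reflection parametrization. An $f$--tuple $\bar b = (b_1, \dots, b_f)$ near $\bar b_0$ with $\langle b_i, b_i \rangle = 1$ determines hyperbolic reflections $R_i = I_V - \alpha_i \otimes b_i$ with $\alpha_i = 2\langle b_i, \,\cdot\,\rangle$, and the hyperbolic equations \eqref{eqn:hyperbolic2} force each product $R_iR_j$ with $F_i, F_j$ adjacent to be a rotation of angle $\tfrac{2\pi}{n_{ij}}$; hence the relations $(R_iR_j)^{n_{ij}} = 1$ hold and $(R_1, \dots, R_f)$ defines a homomorphism $\pi_1(\hP) \to \PO$. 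This gives a smooth map from a neighborhood of $\bar b_0$ in $\Psi_{\hP}^{-1}(0)$ into $\Hom(\pi_1(\hP), \PO)$. Its inverse recovers each $b_i$ as the unique $(-1)$--eigenvector of $R_i$ normalized by $\langle b_i, b_i \rangle = 1$; the only ambiguity $b_i \mapsto -b_i$ is removed near $\bar b_0$ by continuity, so the map is a local diffeomorphism onto a neighborhood of $h_0$.

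The third item then follows by transport: the diffeomorphism of the second item carries $\ker D\Psi_{\hP, \bar b_0}$, which is by definition the Zariski tangent space to $\Psi_{\hP}^{-1}(0)$ at $\bar b_0$, isomorphically onto the Zariski tangent space $Z^1(\pi_1(\hP), \mathfrak{so}(1,n))$ at $h_0$, and the first item identifies the latter with the $\tfrac{n(n+1)}{2}$--dimensional orbit tangent $B^1$. I expect the main obstacle to be precisely the point where rigidity enters: one must argue that $\ker D\Psi_{\hP, \bar b_0}$ --- the honest kernel of the Jacobian, a priori possibly larger than the tangent to the orbit --- coincides with the orbit tangent. This equality is exactly the statement $Z^1 = B^1$, i.e.\ $H^1 = 0$, and it is here that the hypothesis $n \geq 3$ is indispensable, since the analogous equality fails for $n = 2$ where Teichm\"uller deformations survive. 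A secondary care point is checking that the reflection parametrization is genuinely a diffeomorphism at the level of Zariski tangent spaces, not merely a set bijection, which follows once the sign normalization is fixed near $\bar b_0$.
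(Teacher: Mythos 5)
Your proposal is correct and follows essentially the same route as the paper: the reflection parametrization $\bar b \mapsto (R_1,\dots,R_f)$ as a local diffeomorphism onto a neighborhood in $\Hom(\pi_1(\hP),\PO)$, plus Weil's vanishing $H^1(\pi_1(\hP), so(1,n)_{Ad\circ h_0})=0$ to identify the Zariski tangent space with the $\tfrac{n(n+1)}{2}$--dimensional orbit tangent. The only (immaterial) divergence is in the first item, where the paper deduces that a neighborhood of $h_0$ lies in the conjugation orbit from Mostow rigidity together with the holonomy theorem of \cite{Choi04}, whereas you derive it infinitesimally from $Z^1=B^1$ in the style of Weil's local rigidity theorem; both are valid.
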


\begin{proof}
Let $\pi_1(\hP)$ act on the Lie algebra $so(1,n)$ of $\PO$ by the representation $Ad \circ h_0$.
By the work of Weil \cite{Weil64}, the Zariski tangent space to  $\Hom (\pi_1(\hP), \PO)$ at $h_0$ is
isomorphic to the vector space $Z^1(\pi_1(\hP), so(1,n)_{Ad\circ h_0})$ of 1--cocyles for computing the group cohomology.
(See also Raghunathan \cite[Chapters 6 and 7]{Raghunathan72}
and Goldman \cite[Section 1]{Goldman84} for a material on cycles and cocyles.)

A neighborhood of $\Hom(\pi_1(\hP), \PO)$ of $h_0$
consists of holonomies of hyperbolic Coxeter orbifolds diffeomorphic to $\hP$ by Theorem 1 of \cite{Choi04}.
The Mostow rigidity shows that a neighborhood of $h_0$ in
 $\Hom(\pi_1(\hP), \PO)$ is inside the orbit of $h_0$ under the conjugation action of $\PO$. 
 The orbit is a smooth $\tfrac{n(n+1)}{2}$--manifold in a neighborhood of $h_0$ by an easy real algebraic group action theory
 since the hyperbolic holonomy group $h_0 (\pi_1(\hP))$
has a trivial centralizer in $\PO$.
This proves the first item.

Let $R(1, n)$ denote the subspace of $\PO$ of reflections fixing a hyperplane meeting the positive cone,
and 
\[ \mathcal{U}^f:= \{ (b_1, \ldots, b_f) \in V^f| \langle b_i, b_i \rangle = 1, i \in \I_{\hP}\},\]
which is a smooth manifold.
Define the map
\[ {\mathcal{I}'}_{\PO} \co {\mathcal{U}^f} \ra R(1,n)^f\]
by sending  $(b_1, \dots, b_f)$ to $(r_1, \dots, r_f)$ such that
\[r_i(\cdot) =I_V-2 \langle b_i, \,\cdot\,\rangle b_i, \, i=1, \dots, f.\] 
Here, $\{\pm I_V\}^f$ acts on fibers transitively
and the map is a covering map. 
Consider the restriction
\[ {\mathcal{I}''} \co \Psi_{\hP}^{-1}(0) \cap {\mathcal{W}}_{\hP} \subset {\mathcal{W}}_{\hP} 
  \rightarrow \Hom (\pi_1(\hP), \PO) \subset R(1, n)^f ,\]
  where ${\mathcal{W}}_{\hP}$ is an open subset of ${\mathcal U}^f$. 
The relations defining  $\Psi_{\hP}^{-1}(0)$ and $\Hom (\pi_1(\hP), \PO)$ coincide under $\mathcal{I}''$
and the above restriction ${\mathcal{I}''}$ of ${\mathcal{I}'}_{\PO}$ is a local diffeomorphism to its image.
Here, $\{\pm I_V\}$ acts transitively on fibers. This proves the second item. 
(We are in the situation of diffeomorphic coordinate variable changes, heuristically speaking.)

We also obtain
\[\dim \ker D\Psi_{\hP, \bar b_0}=\dim Z^1(\pi_1(\hP),so(1,n)_{Ad\circ h_0})\]
since the second Zariski tangent space is again given by a system of algebraic equations on $R(1,n)^f$. 
By the Weil infinitesimal rigidity \cite{Weil62}, we have $H^1(\pi_1(\hP), so(1,n)_{Ad\circ h_0}) =0$, and
it follows that
\[\dim Z^1(\pi_1(\hP),so(1,n)_{Ad\circ h_0}) = \dim B^1(\pi_1(\hP), so(1,n)_{Ad\circ h_0}).\]
Since $\PO$ acts freely on $\Hom(\pi_1(\hP), \PO)$ with smooth orbits,
the dimension $\dim B^1(\pi_1(\hP), so(1,n)_{Ad \circ h_0})$ of the tangent space of the orbit passing $h_0$  is
$ \dim so(1,n) = \tfrac{n(n+1)}{2}.$
This proves the third item.
(See also the proof of Theorem 1 of \cite{Choi12}.)
\end{proof}

\subsection{The main theorem} \label{subs:Maintheorem}

\begin{definition}
A real projective Coxeter $n$--orbifold $\hP$ is \emph{weakly orderable} if the facets of the fundamental polytope $P$ in $\PS$, 
can be labeled by integers $\{1, \dots, f\}$ so that for each facet $F_i$,
\begin{itemize} 
\item  the cardinality of 
the collection 
\[\mathcal{F}_i :=  \{ F_j|\, j > i \hbox{ and the ridge $F_i \cap F_j$ has order $2$ } \}\]
is $\leq n$, and 
\item the collection $\mathcal{F}_i$ is in general position 
whenever $\mathcal{F}_i$ is not empty.
\end{itemize}
\end{definition}

Here, the general position for a collection of facets means that 
the defining linear equations of the facets are linearly independent.
For $n=3$, we automatically have the last general position condition by Lemma 3 of \cite{Choi12}. Thus,
the second definition generalizes the earlier definition for $n=3$.


Recall that a $n$--polytope $P$ in $\PS$ is simple if exactly $n$ facets meet at each vertex.
Let $f$ and $e$ be the numbers of facets and ridges of $P$ respectively. We introduce an integer
\begin{equation*}
\delta_P = e - nf + \tfrac{n(n+1)}{2}
\end{equation*}
which depends only on the polytope $P$ but not on the orbifold structure. Barnette \cite{Barnette73} 
showed for simple polytopes $P$ that $\delta_P \geq 0$. (See also Greene \cite{greene}.)  
In our context, $\delta_P = 0$ indicates the full rank property of hyperbolic equations. (See Equation \eqref{eqn:deltaP}.)

\begin{theorem}\label{thm:main1}
Let $P$ be a compact hyperbolic Coxeter $n$--polytope, and suppose that $\hP$ is the Coxeter orbifold arising from $P$. 
Suppose that 
\begin{enumerate}
\item[$(C1)$] $\delta_P = 0$ and
\item[$(C2)$] $\hP$ is weakly orderable.
\end{enumerate}
Then a neighborhood of the hyperbolic point in $\De(\hP)$ is homeomorphic to a cell of dimension $e_+(\hP) \!- n$.
\end{theorem}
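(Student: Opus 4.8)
The plan is to transport the problem, through the chain of homeomorphisms in Corollary \ref{cor:identification}, to the solution space $\widetilde{\De}(\hP) = \Phi_{\hP}^{-1}(0) \cap \mathcal{U}_{\hP}$ of Vinberg's equations near the hyperbolic point $t$; here the hypotheses guarantee that $\hP$ admits a real projective structure and, being hyperbolic, admits neither a spherical nor a Euclidean one, so Corollary \ref{cor:identification} applies and $\De(\hP)$ is homeomorphic to $\widetilde{\De}(\hP)/\widetilde{G}$ with $\widetilde{G} = \mathbb{R}_+^f \times \SL$. The crux is then to prove that the Jacobian $[D\Phi_{\hP}]$ attains its maximal possible rank $N_{\hP} = f + e + e_2$ at $t$. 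Once this is known, $\Phi_{\hP}$ is a submersion near $t$, so $\widetilde{\De}(\hP)$ is a smooth manifold of dimension $2(n+1)f - N_{\hP}$ there; and since the $\PGL$--action is proper and free by Lemma \ref{lem:properfree} while $\mathbb{R}_+^f$ acts freely on tuples with all $\alpha_i, b_i \neq 0$, the $\widetilde{G}$--action is proper with finite stabilizer at $t$. The quotient is therefore a smooth manifold of dimension $2(n+1)f - N_{\hP} - \dim\widetilde{G}$, and as a smooth manifold is locally homeomorphic to Euclidean space, a neighborhood of $t$ in $\De(\hP)$ is homeomorphic to a cell.

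The dimension bookkeeping then isolates precisely the role of hypothesis $(C1)$. With $\dim\widetilde{G} = f + (n+1)^2 - 1$ and $N_{\hP} = f + e + e_2$, the dimension of the quotient is
\[ 2(n+1)f - (f+e+e_2) - \bigl(f + (n+1)^2 - 1\bigr) = 2nf - n^2 - 2n - e - e_2, \]
and substituting $e = nf - \tfrac{n(n+1)}{2}$, which is exactly the content of $\delta_P = 0$, collapses this to $(e - e_2) - n = e_+(\hP) - n$. Thus $(C1)$ is the combinatorial input that makes the count of constraints agree with the target dimension, once maximal rank has been established.

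The heart of the argument, and the step I expect to be the main obstacle, is the maximal-rank statement (Lemma \ref{lem:ranksum}), where $(C2)$ and the hyperbolic rigidity enter. My approach is to compare $[D\Phi_{\hP}]$ with the Jacobian $[D\Psi_{\hP}]$ of the hyperbolic equations at $t$, whose rank is already determined: Proposition \ref{prop:ident}, resting on Weil's infinitesimal rigidity $H^1(\pi_1(\hP), so(1,n)) = 0$, gives $\dim \ker D\Psi_{\hP} = \tfrac{n(n+1)}{2}$. At $t$ the substitution $\alpha_i = 2\langle b_i,\,\cdot\,\rangle$ makes $a_{ij} = 2\langle b_i, b_j\rangle$, so on the symmetric subspace $\dot\alpha_i = 2\langle \dot b_i,\,\cdot\,\rangle$ the rows of $[D\Phi_{\hP}]$ indexed by $E_{1,\hP}$ and $E_{3,\hP}$ reduce to nonzero multiples of the corresponding hyperbolic rows, while each order-two ridge contributes \emph{two} Vinberg rows ($a_{ij}=0$ and $a_{ji}=0$) against a single hyperbolic row. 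The remaining task is to show that these $e_2$ doublings, together with the freedom to vary the $\alpha_i$ independently of the $b_i$, produce exactly $N_{\hP}$ independent rows.

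This is where weak orderability is indispensable. I would fix the ordering $F_1, \dots, F_f$ provided by $(C2)$ and argue that $[D\Phi_{\hP}]$ has no nontrivial left null vector $\lambda = (\lambda_{ii}, \lambda^{[1]}_{ij}, \lambda^{[2]}_{ij}, \lambda_{ij})$: reading the relation $\sum \lambda\cdot(\text{row}) = 0$ off the $\dot\alpha_k$-- and $\dot b_k$--column blocks yields, for each facet $F_k$, a linear combination of $b_k$ and of the normals of its neighbors that must vanish. Processing facets in the weak order, the fact that $\mathcal{F}_k$ has at most $n$ elements whose normals are in general position (hence linearly independent) forces the multipliers attached to the order-two relations of $F_k$ with higher-indexed facets to vanish one facet at a time, and the hyperbolic rank computation then disposes of the symmetric part. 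The delicate point, and the genuine obstacle, is this linear-independence bookkeeping: one must track carefully how the two column-blocks of each facet interact with its order-two relations to higher-indexed neighbors, and it is only the ``at most $n$, in general position'' structure of $\mathcal{F}_k$ that prevents the rank from dropping. With Lemma \ref{lem:ranksum} in hand, the assembly in the first two paragraphs completes the proof.
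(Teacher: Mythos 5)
Your proposal follows essentially the same route as the paper: identify $\De(\hP)$ with $\widetilde{\De}(\hP)/\widetilde G$, prove that $[D\Phi_{\hP}]$ attains full rank $N_{\hP}=f+e+e_2$ at the hyperbolic point by comparing it with $[D\Psi_{\hP}]$ via Weil rigidity (Proposition \ref{prop:ident}) and the weak-orderability/general-position bookkeeping of Lemma \ref{lem:ranksum}, and then perform the same dimension count in which $\delta_P=0$ collapses the answer to $e_+(\hP)-n$. The one point to tighten is that ``proper with finite stabilizer'' does not by itself make the quotient a manifold: you need the $\widetilde G$--action to be genuinely free, which the paper establishes in Lemma \ref{lem:manifold} using indecomposability of the Cartan matrix (and which also follows from Lemma \ref{lem:properfree} combined with the positivity of the $d_i$).
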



\subsection{The main lemma}
\label{subs:MainLemma}

The proof of Lemma \ref{lem:ranksum} is technical, hence in Example \ref{eg:Mainlemma} 
we will introduce a simple example to explain the procedure.



\begin{lemma}\label{lem:ranksum}
Let $P$ be a compact hyperbolic Coxeter $n$--polytope,
and suppose that $\hP$ is the Coxeter orbifold arising from $P$.
Let $e_2$ be the number of ridges of order $2$, and let $\bar b_0 \in V^f$ be the $f$--tuple $(b_1, \dots, b_f)$ 
of normal unit vectors for facets of $P$,
and $\bar \alpha_0 \in V^{\ast f}$ the $f$--tuple $(\alpha_1, \dots, \alpha_f)$ of dual vectors $\alpha_i= 2\langle b_i, \,\cdot\, \rangle$.
If $\hP$ is weakly orderable, then
\begin{equation*}
\rank D\Phi_{\hP, (\bar \alpha_0, \bar b_0)} = \rank D\Psi_{\hP, \bar b_0} + e_2.
\end{equation*}
\end{lemma}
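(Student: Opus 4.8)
The plan is to compute $\rank D\Phi_{\hP}$ directly from the block structure of the Jacobian at the hyperbolic point $(\bar\alpha_0,\bar b_0)$, organizing everything around the two equations $\Phi^{[1]}_{ij},\Phi^{[2]}_{ij}$ attached to each order-$2$ ridge $(i,j)\in E_{2,\hP}$. Write $D\Phi_{\hP}=A+B$, where $A$ is the restriction of $D\Phi_{\hP}$ to the $\alpha$-directions $(V^*)^f\times\{0\}$ and $B$ its restriction to the $b$-directions $\{0\}\times V^f$; at the hyperbolic point $\alpha_i=2\langle b_i,\cdot\rangle$, and the explicit formulas of Section~\ref{subs:TangentSpace} give the entries of $A$ and $B$. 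On the target $\R^{N_{\hP}}$ I would introduce two linear maps: a contraction $q\co\R^{N_{\hP}}\to\R^{m}$ (with $m=f+e$) that multiplies each $E_{1,\hP}$-coordinate by $2$, adds the two coordinates $v^{[1]}_{ij},v^{[2]}_{ij}$ of each $E_{2,\hP}$-ridge, and rescales each $E_{3,\hP}$-coordinate by $-1/(2\cos(\tfrac{\pi}{n_{ij}}))$; and the difference map $\pi\co\R^{N_{\hP}}\to\R^{e_2}$ sending $v$ to $(v^{[1]}_{ij}-v^{[2]}_{ij})_{(i,j)\in E_{2,\hP}}$. A dimension count gives $\ker q\cap\ker\pi=0$ and $\dim\ker q+\dim\ker\pi=e_2+m=N_{\hP}$, so $(\pi,q)\co\R^{N_{\hP}}\to\R^{e_2}\oplus\R^{m}$ is an isomorphism; hence it suffices to compute $\dim L$ for $L:=(\pi,q)(\mathrm{im}\,D\Phi_{\hP})$.

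First I would pin down the second factor, $q(\mathrm{im}\,D\Phi_{\hP})$. The scalars in $q$ are chosen precisely so that $q\circ B=D\Psi_{\hP}$, which is a direct check against the formulas for $D\Psi_{ii}$ and $D\Psi_{ij}$; thus $q(\mathrm{im}\,B)=\mathrm{im}\,D\Psi_{\hP}$. The key point is that the $\alpha$-directions contribute nothing new after applying $q$: given $\dot\alpha$, let $c_i\in V$ be its Lorentzian dual, $\dot\alpha_i=2\langle c_i,\cdot\rangle$; a short computation shows $q(A(\dot\alpha))=D\Psi_{\hP}(c_1,\dots,c_f)$, so $q(\mathrm{im}\,A)\subseteq\mathrm{im}\,D\Psi_{\hP}$. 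Consequently $q(\mathrm{im}\,D\Phi_{\hP})=\mathrm{im}\,D\Psi_{\hP}$. The same metric-dual device handles the symmetric directions: for $\dot b\in V^f$ the tangent vector $(\dot\alpha,\dot b)$ with $\dot\alpha_i=2\langle\dot b_i,\cdot\rangle$ is tangent to the hyperbolic locus $\{\alpha_i=2\langle b_i,\cdot\rangle\}$, so its image under $D\Phi_{\hP}$ has equal $E_{2,\hP}$-coordinates (hence lies in $\ker\pi$) while $q$ sends it to $2\,D\Psi_{\hP}(\dot b)$. Therefore $\mathrm{im}\,D\Phi_{\hP}\cap\ker\pi$ already surjects onto $\mathrm{im}\,D\Psi_{\hP}$ under $q$, which gives $L\cap(0\oplus\R^m)=0\oplus\mathrm{im}\,D\Psi_{\hP}$.

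The remaining and decisive step is to show that the first factor is full, $\pi(\mathrm{im}\,D\Phi_{\hP})=\R^{e_2}$, and this is the only place where weak orderability enters. Since $\mathrm{im}\,A\subseteq\mathrm{im}\,D\Phi_{\hP}$ and $\pi(A(\dot\alpha))_{ij}=\dot\alpha_i(b_j)-\dot\alpha_j(b_i)$ for $(i,j)\in E_{2,\hP}$, it is enough to solve, for an arbitrary target $(t_{ij})$, the system $\dot\alpha_i(b_j)-\dot\alpha_j(b_i)=t_{ij}$ in the unknown functionals $\dot\alpha_1,\dots,\dot\alpha_f\in V^*$. I would do this greedily, processing the facets in \emph{decreasing} order of their weak-orderability labels: when $\dot\alpha_i$ is chosen, every $\dot\alpha_j$ with $j>i$ is already fixed, so each ridge $(i,j)$ with $F_j\in\mathcal F_i$ imposes the single affine condition $\dot\alpha_i(b_j)=t_{ij}+\dot\alpha_j(b_i)$ with known right-hand side, while ridges joining $F_i$ to lower-indexed facets are handled when those facets are processed. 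Weak orderability guarantees $|\mathcal F_i|\le n$ and that $\{b_j:F_j\in\mathcal F_i\}$ is linearly independent, so these at most $n<\dim V$ prescribed values can always be realized by some $\dot\alpha_i\in V^*$. This greedy solvability is exactly the combinatorial content of weak orderability, and I expect it to be the main obstacle of the proof—both in isolating the correct one-sided linear system and in checking that the chosen order makes it triangular.

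Finally I would assemble the count. Since $(\pi,q)$ is an isomorphism, $\rank D\Phi_{\hP}=\dim L$, and applying the rank–nullity theorem to the projection $L\to\R^{e_2}$ yields $\dim L=\dim\pi(\mathrm{im}\,D\Phi_{\hP})+\dim\bigl(L\cap(0\oplus\R^m)\bigr)=e_2+\rank D\Psi_{\hP}$, which is the assertion. As a consistency check, this is equivalent to $\dim\mathrm{coker}\,D\Phi_{\hP}=\dim\mathrm{coker}\,D\Psi_{\hP}=\delta_P$, the latter equality following from $\dim\ker D\Psi_{\hP,\bar b_0}=\tfrac{n(n+1)}{2}$ in Proposition~\ref{prop:ident}.
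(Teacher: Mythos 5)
Your proof is correct and is, at bottom, the paper's own argument in coordinate-free clothing: your map $q$ is exactly the paper's row operations (adding the two $E_{2,\hP}$-rows, dividing $E_{3,\hP}$-rows by $a_{ij}=a_{ji}$, doubling $E_{1,\hP}$-rows), your metric-dual/symmetric-direction device is its column operations exploiting $\alpha_i=2\langle b_i,\cdot\rangle$, and your greedy decreasing-order solution of $\dot\alpha_i(b_j)-\dot\alpha_j(b_i)=t_{ij}$ is precisely the block-triangular structure the paper extracts from the weak ordering and the general-position condition. Every ingredient matches one-to-one, so this is essentially the same proof, just organized as an image/exact-sequence count on the target rather than as explicit matrix manipulations.
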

\begin{proof}

Since $\hP$ is weakly orderable, we order the facets of $P$ so that each facet contains at most $n$ ridges of order $2$
in facets of higher indices. 
Define
\begin{equation*}
\I_{\hP}(k)=\{ i\in \I_{\hP} |\, i>k \text{ and } F_i \cap F_k \text{ is a ridge of order $2$}\} \text{ and } i(k)=|\I_{\hP}(k)|.
\end{equation*}
The set can be empty and $i(k) = 0$. 
We may enumerate
\begin{displaymath}
\I_{\hP}(k)=\{ \I_{\hP}(k,1), \dotsc, \I_{\hP}(k,i(k)) \}
\end{displaymath}
such that if $s<t$, then $\I_{\hP}(k,s) < \I_{\hP}(k,t)$. 
Clearly,
\begin{equation}\label{eqn:iq}
k < \I_{\hP}(k,l) \hbox{ for } 1 \leq l \leq i(k).
\end{equation}
That is,
\begin{align*}
1 < \I_{\hP}(1) &= \{ \I_{\hP}(1,1) < \I_{\hP}(1,2) < \dotsm < \I_{\hP}(1,i(1))  \} \\
2 < \I_{\hP}(2) &= \{ \I_{\hP}(2,1) < \I_{\hP}(2,2) < \dotsm < \I_{\hP}(2,i(2))  \} \\
     &\vdots  \\
q < \I_{\hP}(q) &= \{ \I_{\hP}(q,1) < \I_{\hP}(q,2) < \dotsm < \I_{\hP}(q,i(q))  \}
\end{align*}
for some $q, 1 \leq q <  f$. 
Then we have
\begin{align*}
E_{2, \hP} = \{ & (1,\I_{\hP}(1,1)), (1,\I_{\hP}(1,2)), \dotsc, (1,\I_{\hP}(1,i(1))), \\
         & (2,\I_{\hP}(2,1)), (2,\I_{\hP}(2,2)), \dotsc, (2,\I_{\hP}(2,i(2))), \\
         & \quad \quad \vdots \\
         & (q,\I_{\hP}(q,1)), (q,\I_{\hP}(q,2)), \dotsc, (q,\I_{\hP}(q,i(q))) \}
\end{align*}
where $i(k) \leq n$. We note that
\begin{equation*}
\sum_{k=1}^{q} i(k) = |E_{2, \hP}| = e_2.
\end{equation*}
Define the $1 \times (n+1)f$ matrices
\begin{equation*}
\alpha_{[i]}^{[j]}=(0, \ldots, 0, \underbrace{\alpha_i}_{j \text{th block}} ,0, \ldots, 0) \quad \text{and} \quad
b_{[i]}^{[j]}=(0, \ldots, 0, \underbrace{b^t_i}_{j \text{th block}} ,0, \ldots, 0).
\end{equation*}
Denote by $J$ the $(n+1) \times (n+1)$--diagonal matrix with diagonal entries $-1, 1, \dotsc, 1$.
(We will now omit from $D\Phi_{ij, (\bar \alpha, \bar b)}$ the subscripts $(\bar \alpha, \bar b)$  to simplify.)

We note that $\alpha_i= 2b_i^t J$ and $a_{ij}=a_{ij}$ at the hyperbolic point by Proposition 24 of \cite{Vinberg71} and
the rows of the ${N_{\hP}} \times 2(n+1)f$-matrix $[D\Phi_{\hP}]$ are as follows:
\begin{align}\label{eqn:Dphi}
[D\Phi_{ii}] & = (b_{[i]}^{[i]},\alpha_{[i]}^{[i]}) , (i,i) \in E_{1, \hP} \nonumber \\
[D\Phi_{ij}^{[1]}] & = (b_{[j]}^{[i]},\alpha_{[i]}^{[j]} ) , (i,j) \in E_{2, \hP} \nonumber  \\
[D\Phi_{ij}^{[2]}] & = (b_{[i]}^{[j]},\alpha_{[j]}^{[i]} ),  (i,j) \in E_{2, \hP} \nonumber \\
[D\Phi_{ij}] & = (a_{ij}b_{[i]}^{[j]}+a_{ji}b_{[j]}^{[i]},a_{ji}\alpha_{[i]}^{[j]}+a_{ij}\alpha_{[j]}^{[i]}),
 (i,j) \in E_{3, \hP}
\end{align}
by Equation \eqref{eqn:Phidiff}. (Here, we merely indicate the rows and not write the whole matrix.)

Before completing the proof, let us give an example to illustrate.
\begin{example}\label{eg:Mainlemma}
As an example, we use a compact $3$--dimensional hyperbolic tetrahedron to illustrate
the method in the proof of Lemma \ref{lem:ranksum}. See Figure \ref{fig:tetrahedron}.
Here, if an edge is labeled $l$, then its dihedral angle is $\tfrac{\pi}{l}$.
We will simply use the inherited notation here with obvious meaning. 

\begin{figure}[ht]
\labellist
\small\hair 2pt
\pinlabel $F_1$ at 171 97
\pinlabel $F_2$ at 95 105
\pinlabel $F_3$ at 135 45
\pinlabel $F_4$ at 30 140
\pinlabel $2$ at 55 116
\pinlabel $2$ at 215 116
\pinlabel $3$ at 122 139
\pinlabel $3$ at 132 17
\pinlabel $5$ at 76 57
\pinlabel $2$ at 193 57
\endlabellist
\centering
\includegraphics[height=3cm]{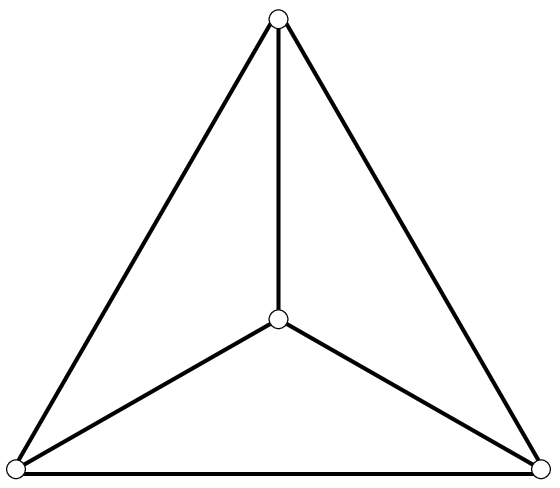}
\caption{A compact hyperbolic tetrahedron}\label{fig:tetrahedron}
\end{figure}

\begin{displaymath}
\tag*{\text{Then}}
\begin{array}{|l|l|}
\hline
\I_{\hP}(1) =  \{\I_{\hP}(1,1)=3 < \I_{\hP}(1,2)=4 \} & E_{2, \hP} =\{ (1,3), (1,4), \\
\I_{\hP}(2) = \{\I_{\hP}(2,1)=4 \}              & \quad \quad \;\;\; (2,4) \} \\
\hline
\end{array}
\end{displaymath}
\begin{displaymath}
E_{3, \hP} =\{(1,2), (2,3), (3,4) \} \quad\text{and}\quad E_{1, \hP} =\{(1,1),(2,2),(3,3),(4,4)\}
\end{displaymath}
and hence
{\footnotesize
\begin{equation*}
[D\Phi_{\hP}]
=
\left[
\begin{array}{c}
\\
D\Phi_{ij}^{[1]}, (i,j) \in E_{2, \hP} \\
\\
\\
D\Phi_{ij}^{[2]}, (i,j) \in E_{2, \hP} \\
\\
\\
D\Phi_{ij} , (i, j) \in E_{3, \hP}\\
\\
\\
D\Phi_{ii},  (i,i) \in E_{1, \hP}\\
\\
\\
\end{array}
\right]
=
\left[
\begin{array}{l}
D\Phi_{13}^{[1]} \\
D\Phi_{14}^{[1]} \\
D\Phi_{24}^{[1]} \\
\hline
D\Phi_{13}^{[2]} \\
D\Phi_{14}^{[2]} \\
D\Phi_{24}^{[2]} \\
\hline
D\Phi_{12}       \\
D\Phi_{23}       \\
D\Phi_{34}       \\
D\Phi_{11}       \\
D\Phi_{22}       \\
D\Phi_{33}       \\
D\Phi_{44}
\end{array}
\right]
\end{equation*}

\begin{equation*}
= \left[
\begin{array}{llll|llll}
b_3^t       &0           &0           &0           &0              &0              &\alpha_1       &0              \\
b_4^t       &0           &0           &0           &0              &0              &0              &\alpha_1       \\
0           &b_4^t       &0           &0           &0              &0              &0              &\alpha_2       \\
\hline
0           &0           &b_1^t       &0           &\alpha_3       &0              &0              &0              \\
0           &0           &0           &b_1^t       &\alpha_4       &0              &0              &0              \\
0           &0           &0           &b_2^t       &0              &\alpha_4       &0              &0              \\
\hline
a_{21}b_2^t &a_{12}b_1^t &0           &0           &a_{12}\alpha_2 &a_{21}\alpha_1 &0              &0              \\
0           &a_{32}b_3^t &a_{23}b_2^t &0           &0              &a_{23}\alpha_3 &a_{32}\alpha_2 &0              \\
0           &0           &a_{43}b_4^t &a_{34}b_3^t &0              &0              &a_{34}\alpha_4 &a_{43}\alpha_3 \\
b_1^t       &0           &0           &0           &\alpha_1       &0              &0              &0              \\
0           &b_2^t       &0           &0           &0              &\alpha_2       &0              &0              \\
0           &0           &b_3^t       &0           &0              &0              &\alpha_3       &0              \\
0           &0           &0           &b_4^t       &0              &0              &0              &\alpha_4
\end{array}
\right]
\end{equation*}
}

where $0$ is the zero $1 \times 4$--matrix.

First, for each $(i,j) \in E_{2, \hP}$, add a row $[D\Phi_{ij}^{[1]}]$ of $[D\Phi_{\hP}]$ to another row $[D\Phi_{ij}^{[2]}]$:
{\footnotesize
\begin{equation} \label{eqn:firstmove}
\left[
\begin{array}{llll|llll}
b_3^t       &0           &0           &0           &0              &0              &\alpha_1       &0              \\
b_4^t       &0           &0           &0           &0              &0              &0              &\alpha_1       \\
0           &b_4^t       &0           &0           &0              &0              &0              &\alpha_2       \\
\hline
{\pmb b_3^t}&0           &b_1^t       &0           &\alpha_3       &0              &{\pmb \alpha_1}&0              \\
{\pmb b_4^t}&0           &0           &b_1^t       &\alpha_4       &0              &0              &{\pmb \alpha_1}\\
0           &{\pmb b_4^t}&0           &b_2^t       &0              &\alpha_4       &0              &{\pmb \alpha_2}\\
\hline
a_{21}b_2^t &a_{12}b_1^t &0           &0           &a_{12}\alpha_2 &a_{21}\alpha_1 &0              &0              \\
0           &a_{32}b_3^t &a_{23}b_2^t &0           &0              &a_{23}\alpha_3 &a_{32}\alpha_2 &0              \\
0           &0           &a_{43}b_4^t &a_{34}b_3^t &0              &0              &a_{34}\alpha_4 &a_{43}\alpha_3 \\
b_1^t       &0           &0           &0           &\alpha_1       &0              &0              &0              \\
0           &b_2^t       &0           &0           &0              &\alpha_2       &0              &0              \\
0           &0           &b_3^t       &0           &0              &0              &\alpha_3       &0              \\
0           &0           &0           &b_4^t       &0              &0              &0              &\alpha_4
\end{array}
\right].
\end{equation}
}

Second, for  $(i,j) \in E_{3, \hP}$, multiply a row $[D\Phi_{ij}]$ of $[D\Phi_{\hP}]$ by $a_{ij}^{-1}$:
{\footnotesize
\begin{displaymath}
\left[
\begin{array}{llll|llll}
b_3^t       &0           &0           &0           &0              &0              &\alpha_1       &0              \\
b_4^t       &0           &0           &0           &0              &0              &0              &\alpha_1       \\
0           &b_4^t       &0           &0           &0              &0              &0              &\alpha_2       \\
\hline
b_3^t       &0           &b_1^t       &0           &\alpha_3       &0              &\alpha_1       &0              \\
b_4^t       &0           &0           &b_1^t       &\alpha_4       &0              &0              &\alpha_1       \\
0           &b_4^t       &0           &b_2^t       &0              &\alpha_4       &0              &\alpha_2       \\
\hline
{\pmb b_2^t}&{\pmb b_1^t}&0           &0           &{\pmb \alpha_2}&{\pmb \alpha_1}&0              &0              \\
0           &{\pmb b_3^t}&{\pmb b_2^t}&0           &0              &{\pmb \alpha_3}&{\pmb \alpha_2}&0              \\
0           &0           &{\pmb b_4^t}&{\pmb b_3^t}&0              &0              &{\pmb \alpha_4}&{\pmb \alpha_3} \\
b_1^t       &0           &0           &0           &\alpha_1       &0              &0              &0              \\
0           &b_2^t       &0           &0           &0              &\alpha_2       &0              &0              \\
0           &0           &b_3^t       &0           &0              &0              &\alpha_3       &0              \\
0           &0           &0           &b_4^t       &0              &0              &0              &\alpha_4
\end{array}
\right].
\end{displaymath}
}

Third, for each $(i,i) \in E_{1, \hP}$, multiply a row $[D\Phi_{ii}]$ of $[D\Phi_{\hP}]$ by $2$:
{\footnotesize
\begin{displaymath}
\left[
\begin{array}{llll|llll}
b_3^t       &0           &0           &0           &0              &0              &\alpha_1       &0              \\
b_4^t       &0           &0           &0           &0              &0              &0              &\alpha_1       \\
0           &b_4^t       &0           &0           &0              &0              &0              &\alpha_2       \\
\hline
b_3^t       &0           &b_1^t       &0           &\alpha_3       &0              &\alpha_1       &0              \\
b_4^t       &0           &0           &b_1^t       &\alpha_4       &0              &0              &\alpha_1       \\
0           &b_4^t       &0           &b_2^t       &0              &\alpha_4       &0              &\alpha_2       \\
\hline
b_2^t       &b_1^t       &0           &0           &\alpha_2       &\alpha_1       &0              &0              \\
0           &b_3^t       &b_2^t       &0           &0              &\alpha_3       &\alpha_2       &0              \\
0           &0           &b_4^t       &b_3^t       &0              &0              &\alpha_4       &\alpha_3       \\
{\pmb 2b_1^t}&0          &0           &0           &{\pmb 2\alpha_1}&0             &0              &0              \\
0           &{\pmb 2b_2^t}&0          &0           &0              &{\pmb 2\alpha_2}&0              &0              \\
0           &0           &{\pmb 2b_3^t}&0           &0             &0              &{\pmb 2\alpha_3}&0              \\
0           &0           &0           &{\pmb 2b_4^t}&0             &0              &0              &{\pmb 2\alpha_4}
\end{array}
\right].
\end{displaymath}
}

Fourth, multiply the left $16$ columns of $[D\Phi_{\hP}]$ by $2$ and 
the $(4i-3)$th columns $(i \in \I_{\hP}=\{1,2,3,4\})$ of $[D\Phi_{\hP}]$ by $-1$ respectively:
{\footnotesize
\begin{displaymath}
\left[
\begin{array}{llll|llll}
\pmb\alpha_3 &0            &0            &0            &0              &0              &\alpha_1       &0              \\
\pmb\alpha_4 &0            &0            &0            &0              &0              &0              &\alpha_1       \\
0            &\pmb\alpha_4 &0            &0            &0              &0              &0              &\alpha_2       \\
\hline
\pmb\alpha_3 &0            &\pmb\alpha_1 &0            &\alpha_3       &0              &\alpha_1       &0              \\
\pmb\alpha_4 &0            &0            &\pmb\alpha_1 &\alpha_4       &0              &0              &\alpha_1       \\
0            &\pmb\alpha_4 &0            &\pmb\alpha_2 &0              &\alpha_4       &0              &\alpha_2       \\
\hline
\pmb\alpha_2 &\pmb\alpha_1 &0            &0            &\alpha_2       &\alpha_1       &0              &0              \\
0            &\pmb\alpha_3 &\pmb\alpha_2 &0            &0              &\alpha_3       &\alpha_2       &0              \\
0            &0            &\pmb\alpha_4 &\pmb\alpha_3 &0              &0              &\alpha_4       &\alpha_3       \\
2\pmb\alpha_1&0            &0            &0            &2\alpha_1      &0              &0              &0              \\
0            &2\pmb\alpha_2&0            &0            &0              &2\alpha_2      &0              &0              \\
0            &0            &2\pmb\alpha_3&0            &0             &0               &2\alpha_3      &0              \\
0            &0            &0            &2\pmb\alpha_4&0             &0               &0              &2\alpha_4
\end{array}
\right]  
\end{displaymath}
}
ie
{\footnotesize
\begin{displaymath}
 \left[
\begin{array}{llll|llll}
\alpha_3 &0            &0            &0            &0              &0              &\alpha_1       &0              \\
\alpha_4 &0            &0            &0            &0              &0              &0              &\alpha_1       \\
0            &\alpha_4 &0            &0            &0              &0              &0              &\alpha_2       \\
\hline
&            &          &\hspace{-13mm}[D\Psi_{\hP}] & &             &              &\hspace{-11mm} [D\Psi_{\hP}]
\end{array}
\right].
\end{displaymath}
}
(See Section \ref{subs:TangentToHyperbolic} for definition of $[D\Psi_{\hP}]$.)
Here we note that
{\footnotesize
\begin{displaymath}
[D\Psi_{\hP}] =
\left[
\begin{array}{c}
\\
D\Psi_{ij}, (i,j) \in E_{2, \hP} \\
\\
\\
D\Psi_{ij}, (i,j) \in E_{3, \hP} \\
\\
\\
D\Psi_{ii}, (i,i) \in E_{1, \hP} \\
\\
\\
\end{array}
\right]
=
\left[
\begin{array}{l}
D\Psi_{13} \\
D\Psi_{14} \\
D\Psi_{24} \\
D\Psi_{12} \\
D\Psi_{23} \\
D\Psi_{34} \\
D\Psi_{11} \\
D\Psi_{22} \\
D\Psi_{33} \\
D\Psi_{44}
\end{array}
\right]
=
\left[
\begin{array}{llll}
\alpha_3       &0              &\alpha_1       &0              \\
\alpha_4       &0              &0              &\alpha_1       \\
0              &\alpha_4       &0              &\alpha_2       \\
\alpha_2       &\alpha_1       &0              &0              \\
0              &\alpha_3       &\alpha_2       &0              \\
0              &0              &\alpha_4       &\alpha_3       \\
2\alpha_1      &0              &0              &0              \\
0              &2\alpha_2      &0              &0              \\
0             &0               &2\alpha_3      &0              \\
0             &0               &0              &2\alpha_4
\end{array}
\right].
\end{displaymath}
}
Finally, using elementary column operations, we obtain
{\footnotesize
\begin{displaymath}
\left[
\begin{array}{l|l|l|l|llll}
\alpha_3     &0            &-\pmb\alpha_1&0            &0              &0              &\alpha_1       &0              \\
\alpha_4     &0            &0            &-\pmb\alpha_1&0              &0              &0              &\alpha_1       \\
0            &\alpha_4     &0            &-\pmb\alpha_2&0              &0              &0              &\alpha_2       \\
\hline
O_{10\times 4}& O_{10\times 4}& O_{10\times 4}& O_{10\times 4}& &             &              &\hspace{-11mm} [D\Psi_{\hP}]
\end{array}
\right]
\end{displaymath}
}
where $O_{s\times t}$ is the $s\times t$ zero matrix. 
Hence, the matrix is of rank $=\rank  [D\Psi_{\hP}] +e_2$.
\end{example}

Now, we continue with the proof of Lemma \ref{lem:ranksum}: Using the notation as before,
we recall our matrix $[D\Phi_{\hP}]$ in Equation \eqref{eqn:Dphi}.
Now we use elementary row and column operations of $[D\Phi_{\hP}]$ to obtain a matrix whose rank is easier to compute.
The step will correspond to one after Equation \eqref{eqn:firstmove} in the above example.

First, for  $(i,j) \in E_{2, \hP}$, add a row $[D\Phi_{ij}^{[1]}]$ of $[D\Phi_{\hP}]$ to another row $[D\Phi_{ij}^{[2]}]$:
\begin{displaymath}
(b_{[i]}^{[j]},\alpha_{[j]}^{[i]} ) \rightarrow (b_{[i]}^{[j]}+b_{[j]}^{[i]},\alpha_{[i]}^{[j]}+\alpha_{[j]}^{[i]} ).
\end{displaymath}
Second, for  $(i,j) \in E_{3, \hP}$, multiply a row $[D\Phi_{ij}]$ of $[D\Phi_{\hP}]$ by $a_{ij}^{-1}$:
\begin{displaymath}
(a_{ij}b_{[i]}^{[j]}+a_{ji}b_{[j]}^{[i]},a_{ji}\alpha_{[i]}^{[j]}+a_{ij}\alpha_{[j]}^{[i]})  \rightarrow (b_{[i]}^{[j]}+b_{[j]}^{[i]},\alpha_{[i]}^{[j]}+\alpha_{[j]}^{[i]}).
\end{displaymath}
Recall that for $(i,j) \in E_{3, \hP}$ each $a_{ij}$ is non-zero and $a_{ij}=a_{ji}$ at the hyperbolic point in $\Phi^{-1}(0)$.

Third, for  $(i,i) \in E_{1, \hP}$, multiply a row $[D\Phi_{ii}]$ of $[D\Phi_{\hP}]$ by $2$:
\begin{displaymath}
(b_{[i]}^{[i]},\alpha_{[i]}^{[i]}) \rightarrow (2b_{[i]}^{[i]},2\alpha_{[i]}^{[i]}).
\end{displaymath}

Fourth, multiply the left $(n+1)f$ columns of $[D\Phi_{\hP}]$ by $2$ and the $(i(n+1)-n)$th columns $(i \in \I_{\hP})$ of $[D\Phi_{\hP}]$ by $-1$ respectively
(The fact that $\alpha_i= 2\nu_i^t J$ and $b_{i}=\nu_i$ is used here):
\begin{align*}
[D\Phi_{ij}^{[1]}] & \rightarrow (\alpha_{[j]}^{[i]},\alpha_{[i]}^{[j]} ) ,  (i,j) \in E_{2, \hP} \\
[D\Phi_{ij}^{[2]}] & \rightarrow (\alpha_{[i]}^{[j]}+\alpha_{[j]}^{[i]},\alpha_{[i]}^{[j]}+\alpha_{[j]}^{[i]} ),  (i,j) \in E_{2, \hP} \\
[D\Phi_{ij}] & \rightarrow (\alpha_{[i]}^{[j]}+\alpha_{[j]}^{[i]},\alpha_{[i]}^{[j]}+\alpha_{[j]}^{[i]} ),  (i,j) \in E_{3, \hP}  \\
[D\Phi_{ii}] & \rightarrow (2\alpha_{[i]}^{[i]},2\alpha_{[i]}^{[i]}), (i,i) \in E_{1, \hP}.
\end{align*}

Similarly, the rows of the $(f+e) \times (n+1)f$ Jacobian matrix $[D\Psi_{\hP}]$ are  as follows:
\begin{align*}
[D\Psi_{ij}] & = \alpha_{[i]}^{[j]} + \alpha_{[j]}^{[i]}, (i,j) \in E_{2, \hP} \cup E_{3, \hP} \\
[D\Psi_{ii}] & = 2 \alpha_{[i]}^{[i]} , (i,i) \in E_{1, \hP}.
\end{align*}
(See Section \ref{subs:TangentToHyperbolic} for definition of $[D\Psi_{\hP}]$.)
Comparing these two matrices and rearranging, we observe that $[D\Phi_{\hP}]$ became
\begin{equation*}
\left[
\begin{array}{c|c}
\alpha_{[\I_{\hP}(1,1)]}^{[1]}    & \alpha_{[1]}^{[\I_{\hP}(1,1)]} \\
\vdots               & \vdots \\
\alpha_{[\I_{\hP}(1,i(1))]}^{[1]} & \alpha_{[1]}^{[\I_{\hP}(1,i(1))]} \\
\vdots                & \vdots \\
\vdots                & \vdots \\
\alpha_{[\I_{\hP}(q,1)]}^{[q]} & \alpha_{[q]}^{[\I_{\hP}(q,1)]}   \\
\vdots                & \vdots \\
 \alpha_{[\I_{\hP}(q,i(q))]}^{[q]} & \alpha_{[q]}^{[\I_{\hP}(q,i(q))]} \\
\hline
{[D\Psi_{\hP}]} & {[D\Psi_{\hP}]}
\end{array}
\right]
\end{equation*}
which is an $N_{\hP} \times 2(n+1)f$ matrix divided into two $e_2 \times (n+1)f$--matrices. 
The top ones correspond to the copies of $E_{2, \hP}$--rows, 
and two bottom $(f+e) \times (n+1)f$--matrices equal $[D\Psi_{\hP}]$.

Using elementary column operations, we obtain
{
\begin{equation*}
\left[
\begin{array}{c|c}
\alpha_{[\I_{\hP}(1,1)]}^{[1]}  - \alpha_{[1]}^{[\I_{\hP}(1,1)]}    & \alpha_{[1]}^{[\I_{\hP}(1,1)]} \\
\vdots                & \vdots \\
\alpha_{[\I_{\hP}(1,i(1))]}^{[1]} -\alpha_{[1]}^{[\I_{\hP}(1,i(1))]} & \alpha_{[1]}^{[\I_{\hP}(1,i(1))]} \\
\vdots                & \vdots \\
\vdots                & \vdots \\
\alpha_{[\I_{\hP}(q,1)]}^{[q]} - \alpha_{[q]}^{[\I_{\hP}(q,1)]}   & \alpha_{[q]}^{[\I_{\hP}(q,1)]} \\
\vdots                & \vdots \\
\alpha_{[\I_{\hP}(q,i(q))]}^{[q]}  - \alpha_{[q]}^{[\I_{\hP}(q,i(q))]} & \alpha_{[q]}^{[\I_{\hP}(q,i(q))]} \\
\hline
0 & {[D\Psi_{\hP}]}
\end{array}
\right].
\end{equation*}
}
Now rewriting this matrix into  the union of $f$ of $N_{\hP} \times (n+1)$--matrices and $N_{\hP} \times f(n+1)$--matrix, we obtain
{
\begin{equation*}
\left[
\begin{array}{c|c|c|c|c|c}
\alpha_{\I_{\hP}(1,1)}  & * & * & * & \dots & *\\
\vdots & \vdots & \vdots & \vdots & \ddots & \vdots \\
\alpha_{\I_{\hP}(1,i(1))} & * & * & * & \dots & * \\
\hline
0 & \alpha_{\I_{\hP}(2,1)} & * & * & \dots &* \\
\vdots & \vdots & \vdots & \vdots & \ddots & \vdots \\
0 & \alpha_{\I_{\hP}(2,i(2))} & * & * & \dots & * \\
\hline
\vdots & \vdots & \ddots & \vdots & \ddots &\vdots \\
\hline
0 & 0 & \dots & \alpha_{\I_{\hP}(q,1)}  & \dots & *\\
\vdots & \vdots & \vdots & \vdots & \ddots & \vdots \\
0 & 0 & \dots & \alpha_{\I_{\hP}(q,i(q))}  & \dots & *\\
\hline
0 & 0 & \dots & 0  & 0 &  {[D\Psi_{\hP}]}\\
\end{array}
\right]
\end{equation*}
}
where $0$'s are zero matrices.
The matrix is so that
\begin{displaymath}
\left[
\begin{array}{l}
\alpha_{\I_{\hP}(k,1)} \\
\alpha_{\I_{\hP}(k,2)} \\
\dotsc \\
\alpha_{\I_{\hP}(k,i(k))}
\end{array} \right]
\end{displaymath}
is in the $k$--th column from rows $\sum_{j=1}^{k-1} i(j)+1$ to $\sum_{j=1}^{k} i(j)$
and every entry below is zero
for $k = 1 , \dots, q$.

The general position condition of the weak orderability implies that for each $k \in \{1, 2, \dotsc, q\}$
\begin{equation*}
\alpha_{\I_{\hP}(k,1)}, \alpha_{\I_{\hP}(k,2)}, \dotsc, \alpha_{\I_{\hP}(k,i(k))}
\end{equation*}
are linearly independent , ie
all submatrices
\begin{displaymath}
\left[
\begin{array}{l}
\alpha_{\I_{\hP}(k,1)} \\
\alpha_{\I_{\hP}(k,2)} \\
\dotsc \\
\alpha_{\I_{\hP}(k,i(k))}
\end{array} \right]
\end{displaymath}
are of full rank. This establishes the result.
\end{proof}



\subsection{Proof of Theorem \ref{thm:main1}}
\label{subs:Proof}

Let $\hP$ be a compact Coxeter $n$--orbifold admitting a hyperbolic structure. 
Assume that $\hP$ admits a real projective structure, but does not admit a spherical or Euclidean structure. Define
\begin{align*}
\mathcal{V}_{\hP} &:=  \{ p \in \mathcal{U}_{\hP} \subset (V^*)^f \times V^f \,|\,
 \text{$D_p\Phi_{\hP}$ is surjective}  \}, \\
\widetilde{\De}(\hP)_r &:= \widetilde{\De}(\hP) \cap \mathcal{V}_{\hP} = \Phi_{\hP}^{-1}(0) \cap \mathcal{V}_{\hP}.
\end{align*}
 The second one is 
 an open subset since the maximal rank condition is an open condition 
 since the rank condition expresses the independence of
 the row vectors of the differential.

Since $\Phi_{\hP}$ is $\widetilde{G}$--invariant, 
$\widetilde{G}$ acts on $\widetilde{\De}(\hP)_r$. 
The action of $\widetilde{G}$ on $\widetilde{\De}(\hP)_r$ is induced from the action $\theta$ on $\widetilde{\De}(\hP)$ in Equation \eqref{eqn:theta}.
Recall that ${N_{\hP}}=f+e+e_2$, where $f$, $e$ and $e_2$ are the number of facets, ridges and ridges of order $2$ of $\hP$ respectively.

We use the following steps:

\begin{enumerate}
\item[1.] $\widetilde{\De}(\hP)_r$ is a smooth manifold of dimension $2(n+1)f-{N_{\hP}}$ if $\widetilde{\De}(\hP)_r \ne \emptyset$.
\item[2.] The orbit space $\De(\hP)_r :=\widetilde{\De}(\hP)_r/\widetilde{G}$ is a smooth manifold of dimension $\dim \widetilde{\De}(\hP)_r - \dim \widetilde{G}$,
and it identifies with an open subset of $\De(\hP)$.
\item[3.] Moreover, if $P$ satisfies the condition $(C1)$, then the manifold $\De(\hP)_r$ is of dimension $e_+(\hP)\! -n$.
\item[4.] Furthermore, if $\hP$ admits a hyperbolic structure and satisfies the condition $(C2)$,
the hyperbolic point $t$ is in $\De(\hP)_r$. This will complete the proof of Theorem \ref{thm:main1}.
\end{enumerate}

We start: 
\begin{enumerate}
\item[1.]  The set $\mathcal{V}_{\hP}$ is an open subset of $\mathcal{U}_{\hP} \subset (V^*)^f \times V^f$ and
the restriction to $ \mathcal{V}_{\hP}$ of the map $\Phi_{\hP}$ is a submersion.
Thus each level set of $\Phi_{\hP} |_{ \mathcal{V}_{\hP}}$
is an embedded submanifold in $\mathcal{V}_{\hP}$ whose codimension is ${N_{\hP}}$.
The conclusion is immediate.
\item[2.] 
As we defined above,
$\widetilde{\De}(\hP)_r$ is an open subset of $\widetilde{\De}(\hP)$ that is the complement of an algebraic
closed set and $\widetilde{G}$ acts on both sets.
By Lemma \ref{lem:manifold}, $\widetilde{G}$ acts smoothly, freely, and properly on a smooth manifold $\widetilde{\De}(\hP)_r$,
and hence the orbit space $\widetilde{\De}(\hP)_r/\widetilde{G}$ is a smooth manifold of dimension
$\dim \widetilde{\De}(\hP)_r - \dim \widetilde{G}$. 
Therefore, $\widetilde{\De}(\hP)_r/\widetilde G$ identifies with an open subset of
$\De(\hP)$ by Theorem \ref{thm:identification}.

\item[3.] $\dim \widetilde{\De}(\hP)_r - \dim \widetilde{G} = (2(n+1)f-{N_{\hP}})-(f+(n+1)^2-1)= e_{+}-n-2 \delta_P$ holds.
Since $\delta_P=0$, we obtain $\dim \widetilde{\De}(\hP)_r  -\dim \widetilde{G} = e_+(\hP)\! -n$, and hence
the step 2 implies the conclusion.
\item[4.] Proposition \ref{prop:ident} yields $\ker D\Psi_{\hP} =  \tfrac{n(n+1)}{2}$ at the hyperbolic point $t$. 
Hence
\begin{equation}\label{eqn:deltaP}
\rank D\Psi_{\hP} = (n+1)f - \tfrac{n(n+1)}{2} = f+e-\delta_P
\end{equation}
holds where $\delta_P = e-nf+\tfrac{n(n+1)}{2}$.
Since $\delta_P=0$ and $\hP$ is weakly orderable, 
$\rank D\Phi_{\hP}= \rank D\Psi_{\hP}+e_2=f+e+e_2$ at $t$
by Lemma \ref{lem:ranksum}, and so $D\Phi_{\hP}$ at $t$ is of full rank.
\end{enumerate}

\begin{lemma}\label{lem:manifold}
Let $\hP$ be a compact Coxeter $n$--orbifold.
Assume that $\hP$ admits a real projective structure, but does not admit a spherical or Euclidean structure.
Then $\widetilde{G}$ acts smoothly, freely and properly on a smooth manifold $\widetilde{\De}(\hP)_r$.
\end{lemma}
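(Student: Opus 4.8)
The plan is to deduce all three properties from the equivariant fibration $\mathcal{I}'_{\SL}$ together with Lemma \ref{lem:properfree}, which already controls the $\PGL$--action on tuples of reflections. First, for smoothness, I note that the action $\theta$ of \eqref{eqn:theta} is given by rational expressions in the entries of $g, g^{-1}$ and the positive scalars $d_i, d_i^{-1}$, hence is a smooth action on the ambient manifold $(V^*)^f \times V^f$. I would then verify that $\widetilde{\De}(\hP)_r$ is $\widetilde{G}$--invariant: the set $\Phi_{\hP}^{-1}(0)$ is invariant by \eqref{eqn:phiinv}; the open conditions defining $\mathcal{U}_{\hP}$ are preserved because the entries $a_{ij}=\alpha_i(b_j)$ are $\widetilde{G}$--invariant while the positivity and spanning conditions transform through $\vec v \mapsto g\vec v$; and the maximal--rank locus $\mathcal{V}_{\hP}$ is invariant because $\Phi_{\hP}\circ\theta(d,g)=\Phi_{\hP}$ forces $D\Phi_{\hP}|_{\theta(d,g)p}\circ D\theta(d,g)|_p = D\Phi_{\hP}|_p$ with $D\theta(d,g)|_p$ an isomorphism, so the rank of $D\Phi_{\hP}$ is constant along orbits. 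The restricted action is then smooth.

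Second, for freeness, suppose $(d_1,\dots,d_f,g)$ fixes a point $p=(\alpha_1,\dots,\alpha_f,b_1,\dots,b_f)\in\widetilde{\De}(\hP)_r$. As computed just before Theorem \ref{thm:identification}, $\theta(d,g)$ conjugates the associated reflections $r_i=\mathcal{I}'_{\SL}(p)$ by $g$ independently of $d$; fixing $p$ therefore forces $g r_i g^{-1}=r_i$ for all $i$, so $g$ centralizes the group $\Gamma$ generated by the $r_i$. Since $p\in\widetilde{\De}(\hP)_r$, the group $\Gamma$ is an irreducible dividing reflection group by Theorem \ref{thm:identification} and Proposition \ref{prop:vinberg2}, so the freeness half of Lemma \ref{lem:properfree} gives $[g]=\mathrm{id}$ in $\PGL$, that is $g=\pm I_V$. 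Substituting $g=\pm I_V$ into \eqref{eqn:theta} gives $\pm d_i\alpha_i=\alpha_i$, and since each $\alpha_i\neq 0$ (as $\alpha_i(b_i)=2$) the sign must be $+$ and $d_i=1$ for every $i$; thus the stabilizer is trivial.

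Third, and this is where the argument is most delicate, I would prove properness via the sequential criterion, controlling the $\SL$--factor by Lemma \ref{lem:properfree} and the non-compact torus factor $\mathbb{R}_+^f$ by hand. Given sequences $x_m\to k$ in $\widetilde{\De}(\hP)_r$ and $(d^{(m)},g^{(m)})\in\widetilde{G}$ with $\theta(d^{(m)},g^{(m)})x_m\to p\in\widetilde{\De}(\hP)_r$, I apply $\mathcal{I}'_{\SL}$: by equivariance with respect to $\widetilde{G}\to\PGL$, $(d,g)\mapsto[g]$, the images are $[g^{(m)}]$--translates of convergent sequences in $\mathcal{U}$, so properness of the $\PGL$--action (Lemma \ref{lem:properfree}) yields a subsequence with $[g^{(m)}]\to[g_\infty]$, and the double cover $\SL\to\PGL$ lets me lift to $g^{(m)}\to g_\infty$ in $\SL$ after a further subsequence. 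The crucial remaining point is that the scalars $d_i^{(m)}$ stay bounded away from both $0$ and $\infty$: reading the first $f$ coordinates $\hat\alpha_i^{(m)}=d_i^{(m)}\alpha_i^{(m)}(g^{(m)})^{-1}$ and using $\alpha_i^{(m)}(g^{(m)})^{-1}\to\alpha_i g_\infty^{-1}\neq 0$ with the convergence of $\hat\alpha_i^{(m)}$ shows $d_i^{(m)}$ converges to a finite $d_i^\infty\geq 0$, while $d_i^\infty=0$ is impossible since it would force $\hat\alpha_i=0$, contradicting $\hat\alpha_i(\hat b_i)=2$. Hence $(d^{(m)},g^{(m)})\to(d^\infty,g_\infty)\in\widetilde{G}$, which verifies properness. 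The main obstacle is precisely this last step: Lemma \ref{lem:properfree} governs only the projective factor, so the boundedness of the diagonal scaling group $\mathbb{R}_+^f$ must be extracted separately from the explicit form of $\theta$.
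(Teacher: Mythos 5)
Your proof is correct, but it takes a genuinely different route from the paper's. The paper argues entirely inside the $(\alpha,b)$--coordinates of Vinberg's equations: for freeness it uses indecomposability of the Cartan matrix (Proposition \ref{prop:vinberg2}) to force $d_1=\dots=d_f=d$, then stacks $n+1$ linearly independent $\alpha_i$'s into an invertible matrix $S$ with $d\,Sg^{-1}=S$ to conclude $g=dI_{n+1}$ and $d^{n+1}=1$; for properness it first shows the ratios $d_{i,k}d_{j,k}^{-1}$ converge (again via indecomposability of both Cartan matrices), then uses the same determinant trick on $S_k g_k^{-1}$ and $d_{i_0,k}^{-1}S_k$ to pin down $d_{i_0,k}$ and $g_k$ simultaneously. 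You instead factor both properties through the map $\mathcal{I}'_{\SL}$ to reflection tuples and invoke Lemma \ref{lem:properfree} (which the paper proves but deploys only in Theorem \ref{thm:defrepspaces}) to control the $\SL$--factor up to $\pm I_V$, and then extract the scalars $d_i$ separately from the $\alpha$--coordinates, using $\hat\alpha_i(\hat b_i)=2$ to rule out degeneration to $0$. Your modular route buys reuse of an existing lemma and cleanly isolates the genuinely new difficulty (the noncompact torus $\mathbb{R}_+^f$), at the cost of a lifting step from $\PGL$ to $\SL$ and of yielding only subconvergence of $(d^{(m)},g^{(m)})$ (which suffices for properness); the paper's route is self-contained, avoids the covering-space bookkeeping, and produces an explicit limit of the full sequence. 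Your side remarks on the $\widetilde G$--invariance of the maximal-rank locus $\mathcal{V}_{\hP}$ and of $\mathcal{U}_{\hP}$ are correct and in fact make explicit a point the paper only asserts in the proof of Theorem \ref{thm:main1}.
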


\begin{proof}
We show that $\widetilde{G}$ acts freely on a smooth manifold $\widetilde{\De}(\hP)_r$, a locally compact metric space.
Suppose that
\begin{equation*}
(d_1, \dotsc, d_f, g) \cdot (\alpha_1, \dotsc, \alpha_f, b_1, \dotsc, b_f)  \\ = (\alpha_1, \dotsc, \alpha_f, b_1, \dotsc, b_f)
\end{equation*}
where $d_1, \dotsc, d_f \in \mathbb{R}_+$ and $g \in \SL$. That is,
\begin{equation*}
d_i \alpha_i g^{-1} = \alpha_i \quad \text{and} \quad d_i^{-1} g b_i = b_i \quad \text{for every } i \in \I.
\end{equation*}
Hence, $d_i d_j^{-1} a_{ij} = a_{ij}$ holds, and $d_i = d_j$ if $\alpha_i(b_j) \neq 0$.

By Proposition \ref{prop:vinberg2}, for any holonomy group $\Gamma$ of $\pi_1(\hP)$, the Cartan matrix of $\Gamma$ is indecomposable.
It follows that $d_1 = \dotsm = d_f$. Denote the common value by $d$.
Choose $(n+1)$ linearly independent linear functionals $\alpha_{i_0}, \alpha_{i_1}, \dotsc , \alpha_{i_{n}}$ from the facets of $\hP$
since the fundamental domain is a properly convex polytope by the condition of $\mathcal{U}_{\hP}$. 
Let $S$ be an invertible $(n+1)\times (n+1)$ matrix
\begin{displaymath}
S= \left[
\begin{array}{c}
\alpha_{i_0} \\
\alpha_{i_1} \\
\dotsc \\
\alpha_{i_{n}}
\end{array} \right].
\end{displaymath}
Then $d\, S g^{-1} = S$ and hence $d^{n+1}=\text{det}(g)=1$. 
Observe that $d=1$ and $g=I_{n+1}$ establishing the result.

Next, we show that $\widetilde{G}$ acts properly on a smooth manifold $\widetilde{\De}(\hP)_r$.
Suppose that a sequence $\{p_k =(\alpha_{1,k}, \dotsc, \alpha_{f,k}, b_{1,k}, \dotsc, b_{f,k})\}$ in $\widetilde{G}$ is such that 
\[\{ p_k \} \ra (\alpha_1, \dotsc, \alpha_f, b_1, \dotsc, b_f) \in \widetilde{\De}(\hP)_r,\] 
and $\{q_k = (d_{1,k},\dotsc, d_{f,k}, g_k)\}$ is a sequence in $\widetilde{G}$ such that
\[q_k \cdot p_k \ra (\widetilde{\alpha}_1, \dotsc, \widetilde{\alpha}_f, \widetilde{b}_1, \dotsc, \widetilde{b}_f) \in \widetilde{\De}(\hP)_r
\hbox{ as } k \ra \infty.\]
That is,
\begin{equation}\label{eqn:converge}
\{d_{i,k}\alpha_{i,k} g_{k}^{-1}\} \rightarrow \widetilde{\alpha_{i}} 
\quad \text{and} \quad \{d_{i,k}^{-1}g_k b_{i,k}\} \rightarrow \widetilde{b_i} \text{ for each } i \in \I.
\end{equation}

Since we are in a metric space,  
we show that $\{q_k\}$ is bounded to prove the properness of the action:
We have
\[\{d_{i,k}d_{j,k}^{-1}\alpha_{i,k} b_{j,k}\} \rightarrow \widetilde{\alpha}_i \widetilde{b}_j, \hbox{ hence }
\{d_{i,k}d_{j,k}^{-1}\} \rightarrow \widetilde{\alpha}_i \widetilde{b}_j (\alpha_i b_j)^{-1} \hbox{ if } \alpha_i b_j \neq 0.\]
Moreover,
\[\widetilde{\alpha}_i \widetilde{b}_j, \widetilde{\alpha}_j \widetilde{b}_i, \alpha_i b_j,
 \alpha_j b_i < 0 , (i,j) \not\in E_1 \cup E_2.\]
Since the Cartan matrices $A=(a_{ij})$, $a_{ij}=\alpha_i b_j$, 
and $\widetilde{A}=(\widetilde{a}_{ij})$, $\widetilde{a}_{ij}=\widetilde{\alpha_i} \widetilde{b_j}$,
are indecomposable, 
\begin{equation}\label{eqn:converge2}
\{d_{i,k}d_{j,k}^{-1}\} \rightarrow c_{ij} >0 \hbox{ for every } (i,j) \in \I\times \I.
\end{equation}
Define $(n+1) \times (n+1)$ matrices
\begin{displaymath}
\widetilde{S}= \left[
\begin{array}{c}
\widetilde{\alpha}_{i_0} \\
\widetilde{\alpha}_{i_1} \\
\vdots \\
\widetilde{\alpha}_{i_{n}}
\end{array} \right],
\quad
S = \left[
\begin{array}{c}
c_{i_0i_0}\alpha_{i_0} \\
c_{i_1i_0} \alpha_{i_1} \\
\vdots \\
c_{i_{n}i_0} \alpha_{i_{n}}
\end{array} \right]
\quad \text{and} \quad
S_k = \left[
\begin{array}{c}
d_{i_0,k} \alpha_{i_0,k} \\
d_{i_1,k} \alpha_{i_1,k} \\
\vdots \\
d_{i_{n},k} \alpha_{i_{n},k}
\end{array}\right].
\end{displaymath}
Since $\widetilde{S}$ and $S$ are invertible, 
Equations \eqref{eqn:converge} and \eqref{eqn:converge2} show that 
\[\{S_k g_k^{-1}\} \rightarrow \widetilde{S} \hbox{ and }\{d_{i_0,k}^{-1} S_k \}\rightarrow S;
\hbox{ hence,  } \{d_{i_0,k} g_k^{-1}\} \rightarrow S^{-1} \widetilde{S}.\]
As $\text{det}\,g_k = \pm 1$,
the sequence $\{d_{i_0,k}^{n+1}\}$ converges to a positive number
\begin{displaymath}
|\text{det}(S^{-1} \widetilde{S})|.
\end{displaymath}
Denote by $d_{i_0}$ the positive $(n+1)$th root of this limit.
$\{d_{i_0,k}\} \ra d_{i_0}$ and $\{g_k\}$ limits to 
$\{d_{i_0}\widetilde{S}^{-1}S\}$ respectively. 
Since we can choose a collection of faces to include $\alpha_{i_0}$ for any $i_0$, 
$\{q_k\}$ is convergent. 
\end{proof}


\subsection{Proofs of  Corollaries \ref{cor:main1} and \ref{cor:truncation}.} \label{subs:proofs}

\begin{proof}[Proof of Corollary \ref{cor:main1}]
$\delta_P =0$ for any $3$--dimensional simple polytope $P$ by the Euler's formula.
Hence, Theorem \ref{thm:main1} gives us the conclusion.
\end{proof}

\begin{proof}[Proof of Corollary \ref{cor:truncation}]
Let $\hP$ be a hyperbolic truncation Coxeter orbifold with the fundamental polytope $P \subset \PS$. 
For $n=3$, this is the work of Marquis \cite{Marquis10}.
For $n\geq 4$, as shown in Br{\o}ndsted \cite[$\S 19$]{Brondsted83},
$P$ is a truncation $n$--polytope if and only if $\delta_P=0$.

By Lemma \ref{lem:trunc}, an orbifold based on a truncation $n$--polytope $P$ is weakly orderable since 
a compact Coxeter orbifold based on 
an $n$--simplex is weakly orderable. 
\end{proof} 

\begin{lemma}\label{lem:trunc} 
Let a polytope $P_2$ be obtained from a polytope $P_1$ by iterated truncation. 
Suppose that a compact Coxeter orbifold $\hP_2$ has the base polytope $P_2$ and 
another compact Coxeter orbifold $\hP_1$ has the base polytope $P_1$, 
$\hP_2$ has the ridge orders extending those of $\hP_1$, and $\hP_1$ is weakly orderable. 
Then $\hP_2$ is weakly orderable. 
\end{lemma}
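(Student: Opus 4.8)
The plan is to reduce to the case of a single truncation and then induct on the number of truncation steps. Write the iterated truncation as a chain $P_1 = Q_0 \Rightarrow Q_1 \Rightarrow \cdots \Rightarrow Q_m = P_2$, where $Q_{j+1}$ is obtained from $Q_j$ by truncating a vertex. A vertex truncation removes only the truncated vertex and modifies only the faces containing it, so every ridge of $Q_j$ persists as a ridge of $Q_{j+1}$, and hence as a ridge of $P_2$. I would therefore equip each $Q_j$ with the Coxeter orbifold structure $\hat Q_j$ whose ridge orders are the orders carried by the corresponding ridges in $\hP_2$. Because $\hP_2$ extends $\hP_1$ and every ridge of $P_1$ survives in $P_2$, the structure $\hat Q_0$ is exactly $\hP_1$, and by construction each $\hat Q_{j+1}$ extends $\hat Q_j$. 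Thus it suffices to prove the single-truncation statement: if $\hat Q$ is obtained from $\hat Q'$ by one vertex truncation with ridge orders extending those of $\hat Q'$, and $\hat Q'$ is weakly orderable, then $\hat Q$ is weakly orderable.

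For the single step, suppose $Q' = Q_j$ is truncated at a vertex $v$ to give $Q = Q_{j+1}$. In our setting all the polytopes are simple (the base is a simplex and iterated truncations of simple polytopes are simple), so exactly $n$ facets $F_{i_1}, \dotsc, F_{i_n}$ meet at $v$, and their defining functionals $\alpha_{i_1}, \dotsc, \alpha_{i_n}$ are linearly independent, since their common kernel is the line of $V$ representing the vertex $v$. The truncation introduces a single new facet $F_0$, adjacent precisely to $F_{i_1}, \dotsc, F_{i_n}$ and producing $n$ new ridges $F_0 \cap F_{i_k}$. Every old facet keeps its defining hyperplane, no adjacency between two old facets is created or destroyed, and every old ridge keeps its order under the extended structure $\hat Q$.

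Now I construct a weakly orderable labeling of $\hat Q$. Take a weakly orderable labeling $F_1, \dotsc, F_f$ of $\hat Q'$, assign $F_0$ the smallest label, and shift the old labels up by one, preserving their relative order. For an old facet $F_i$, the collection $\mathcal{F}_i$ computed in $\hat Q$ coincides with the one computed in $\hat Q'$: the facets of higher index adjacent to $F_i$ along an order-$2$ ridge are all old facets (since $F_0$ now carries the smallest index), and old adjacencies and orders are unchanged. Hence the bound $|\mathcal{F}_i| \leq n$ and the general-position condition are inherited from the weak orderability of $\hat Q'$. For the new facet $F_0$, the set $\mathcal{F}_{F_0}$ is a subset of $\{F_{i_1}, \dotsc, F_{i_n}\}$, so $|\mathcal{F}_{F_0}| \leq n$, and since these $n$ functionals are linearly independent, any subcollection of them is in general position. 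Thus $\hat Q$ is weakly orderable, and the induction gives the lemma.

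The step requiring the most care is the combinatorial claim that a truncation is a purely local operation: that it adds exactly one facet, adjacent to exactly the $n$ facets through the simple vertex, while leaving every other adjacency and every inherited order intact, and that those $n$ facets are in general position. Once this is secured, assigning the new facet the smallest label is precisely what keeps it out of all the old collections $\mathcal{F}_i$, so the only genuinely new constraint to check is the one at $F_0$, which simplicity of $v$ guarantees.
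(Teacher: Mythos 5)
Your proof is correct and follows essentially the same route as the paper's: reduce to a single vertex truncation by induction, give the new facet the lowest label so that no old collection $\mathcal{F}_i$ changes, and use the simplicity of the truncated vertex to see that the new facet meets only $n$ facets whose defining functionals are linearly independent. You are somewhat more explicit than the paper about the intermediate orbifold structures in the induction and about why the $n$ functionals through the vertex are in general position, but the argument is the same.
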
 
\begin{proof} 
By induction, suppose that 
$P_2$ obtained from 
$P_1$ by a truncation at a vertex $v$ of $P_1$. 
We give an ordering
of faces of $P_2$ by labeling the new facet to be the lowest one $F_1$ and the remaining ones are to be denoted
$F_{i+1}$ when they were labeled by $F_i$ before.
Then we need to check for $F_1$ only since the other faces already satisfy the weak orderability condition for those faces.
However, $F_1$ meets only $n$ facets by the simplicity of $P_1$ and
since these $n$ facets were meeting at a vertex only, $F_1$ can only be an $(n-1)$--dimensional simplex.
This implies that any collection of the facets meeting $F_1$ are in a general position.
\end{proof}

\section{Examples}
\label{s:Examples}
Section \ref{s:Examples} provides several examples of weakly orderable compact hyperbolic Coxeter $3$--orbifolds and
gives two examples satisfying only one of the two conditions $(C1)$ and $(C2)$
where the conclusion of Theorem \ref{thm:main1} does not hold.


\subsection{Weakly orderable compact hyperbolic Coxeter $3$--orbifolds}
\label{subs:Examples}

Every compact hyperbolic Coxeter $3$--orbifold whose base polytope has the combinatorial type of a cube is weakly orderable.
Theorem \ref{thm:probability} shows that 
almost all compact hyperbolic Coxeter $3$--orbifolds, with the combinatorial type of a dodecahedron, are weakly orderable 
while there are ones not weakly orderable. 


Before going to the proof of Theorem \ref{thm:probability}, we state Tutte's theorem \cite{Tutte47}.

An $1$--dimensional cell complex $\G$ is a \emph{graph}.
It consists of vertices ($0$--cells) to which edges ($1$--cells) are attached.
We deal with only \emph{simple} graphs, that have no loops and no more than one edge between any two vertices.
The \emph{degree} of a vertex in a graph is the number of edges with which it is incident.
If all the vertices in a graph $\G$ have degree $d$, $\G$ is said to be \emph{regular of degree $d$}.

A \emph{subgraph} of $\G$ is a graph having all of its vertices and edges in $\G$.
A graph $\G$ with at least $k+1$ vertices is \emph{$k$--connected} if every subgraph of $\G$, obtained by omitting from
$\G$ any $k-1$ or fewer vertices and the edges incident to them, is connected.
A \emph{spanning subgraph} of $\G$ is a subgraph containing all the vertices of $\G$.
A \emph{factor} is a spanning subgraph which is regular of degree $1$.

\begin{theorem}{\rm \cite{Tutte47}}\qua
Let $\G$ be a finite graph. 
If $\G$ is a $d$--connected graph having the even number of vertices and is regular of degree $d$, then $\G$ has a factor.
Moreover, if, in addition, $\mathfrak{e}$ is any edge of $\G$, then $\G$ has a factor containing $\mathfrak{e}$.
\end{theorem}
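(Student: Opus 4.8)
The plan is to derive the statement from Tutte's $1$--factor theorem: a finite graph $\G$ admits a factor if and only if $o(\G-S)\le |S|$ for every vertex subset $S$, where $o(H)$ denotes the number of connected components of $H$ having an odd number of vertices. (This criterion is in fact the main theorem of \cite{Tutte47}, so the statement we are after may be regarded as a corollary of it.) Everything thus reduces to checking Tutte's inequality in the two situations.

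First I would establish the unqualified assertion. Assume for contradiction that $o(\G-S)>|S|$ for some $S$, and set $k=o(\G-S)\ge |S|+1$. If $S=\emp$, then since $\G$ is $d$--connected with $d\ge 1$ it is connected, so its single component has an even number of vertices and $o(\G)=0$, a contradiction. Hence $S\neq\emp$ and $k\ge 2$, so $\G-S$ has at least two components and $S$ is a vertex cut. For each odd component $C_i$ let $e_i$ be the number of edges from $C_i$ to $S$ and put $S_i=N(C_i)\cap S$. Deleting $S_i$ separates $C_i$ from the other components, so $S_i$ is a cut and $|S_i|\ge d$ by $d$--connectivity; since distinct vertices of $S_i$ account for distinct edges to $C_i$, we get $e_i\ge |S_i|\ge d$. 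Counting edges out of $S$ yields $\sum_i e_i\le d|S|$, and therefore $d\,k\le \sum_i e_i\le d|S|$, i.e.\ $k\le |S|$, the desired contradiction.

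For the refinement, write $\mathfrak{e}=uv$ and consider $\G'=\G-\{u,v\}$; it suffices to find a factor of $\G'$, since adjoining $\mathfrak{e}$ then produces a factor of $\G$ through $\mathfrak{e}$. I would again verify Tutte's inequality, now for $\G'$. Given $T\subseteq V(\G')$, put $S=T\cup\{u,v\}$, so that $|S|=|T|+2$ and $o(\G'-T)=o(\G-S)\le |T|+2$ by the first part. Because $|V(\G')|=|V(\G)|-2$ is even and the number of odd components of any graph has the same parity as its number of vertices, $o(\G'-T)\equiv |T|\pmod 2$; this excludes the value $|T|+1$. It remains only to exclude $o(\G'-T)=|T|+2$, that is, the equality $o(\G-S)=|S|$ with $u,v\in S$.

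The hard part is this last exclusion. Equality $o(\G-S)=|S|$ forces every inequality in the chain $d\,k\le\sum_i e_i\le d|S|$ above to be sharp: each odd component has exactly $e_i=d$ edges to $S$, there are no even components (one would contribute further edges to $S$, using that $\G$ is connected), and, crucially, $S$ spans no edge of $\G$. But $u$ and $v$ lie in $S$ and are joined by $\mathfrak{e}$, contradicting the last point. Hence $o(\G'-T)\le |T|$ for all $T$, so $\G'$ has a factor and we are done. The whole weight of the argument rests on extracting this rigidity from the equality case and then playing it against the adjacency of the endpoints of $\mathfrak{e}$.
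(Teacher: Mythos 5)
The paper contains no proof of this statement---it is quoted directly from Tutte's 1947 article---so there is no internal argument to compare yours against; I can only assess your proof on its own terms. It is correct, and it is the classical derivation of this corollary from Tutte's $1$--factor criterion, which is indeed the main theorem of that same article, so your route essentially retraces Tutte's. The two places where genuine care is needed are both handled: first, the bound $e_i \ge d$ requires that $S_i = N(C_i)\cap S$ really be a vertex cut, which needs some vertex to survive outside $C_i \cup S_i$; this is guaranteed because $S \ne \emp$ forces $k \ge |S|+1 \ge 2$ in the first part and $k = |S| \ge 2$ in the second, so another odd component is available. Second, in the equality case the degree count $d|S| = \sum_i e_i + (\hbox{edges from } S \hbox{ to even components}) + 2\cdot(\hbox{edges spanned by } S)$ is exactly what shows $S$ spans no edge, and the adjacency of $u$ and $v$ then yields the contradiction; your aside about even components is a correct consequence of equality but is not needed for this step. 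The only implicit hypothesis is $d \ge 1$ (used when $S=\emp$ to conclude $\G$ is connected); this is harmless, as the statement genuinely fails for $d=0$ and the paper only invokes the theorem with $d=3$.
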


\begin{lemma}\label{lem:coloring}
Let $P$ be a properly convex compact $3$--polytope but not a tetrahedron. 
Suppose that $P$ has no prismatic $3$--circuit and has at most one prismatic $4$--circuit.
Then there exists a compact hyperbolic Coxeter $3$--orbifold $\hP$ with the base polytope $P$ such that 
each vertex is incident with exactly two edges of order $2$. 
\end{lemma}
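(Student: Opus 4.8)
The plan is to reduce the prescribed edge orders to a graph-theoretic factorization problem and then invoke Andreev's theorem to realize the resulting angle data hyperbolically. Since a compact hyperbolic Coxeter polytope is simple, we may take $P$ to be simple, so its $1$--skeleton $G$ is a $3$--regular, $3$--connected planar graph (by Steinitz's theorem), and it has an even number of vertices because $3|V(G)| = 2|E(G)|$. As $P$ is not a tetrahedron, it has more than four faces, which is the hypothesis under which Andreev's theorem applies. The key observation is that requiring each vertex to be incident with exactly two edges of order $2$ is the same as requiring the edges which are \emph{not} of order $2$ to form a factor (perfect matching) of $G$: at a trivalent vertex, two incident order--$2$ edges leave exactly one incident higher-order edge, so a choice of exactly one higher-order edge at every vertex is precisely a factor.

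First I would produce such a factor $M$ using Tutte's theorem with $d=3$. If $P$ has no prismatic $4$--circuit, the first assertion of Tutte's theorem already yields a factor $M$ of $G$. If $P$ has its (unique) prismatic $4$--circuit $\beta$, let $\mathfrak{e}$ be any one of the four edges of $\partial P$ that $\beta$ crosses, and apply the second assertion of Tutte's theorem to obtain a factor $M$ containing $\mathfrak{e}$. In either case I assign dihedral angle $\tfrac{\pi}{3}$ (order $3$) to every edge of $M$ and dihedral angle $\tfrac{\pi}{2}$ (order $2$) to every edge not in $M$. Because $M$ is a factor, each vertex meets exactly one edge of $M$, hence exactly two edges of order $2$, as required.

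Next I would check that this angle assignment $\theta\colon E(G) \to (0, \tfrac{\pi}{2}]$ satisfies the hypotheses of Andreev's theorem \cite{Roeder07}, so that it is realized by a compact hyperbolic polytope combinatorially equivalent to $P$; silvering its facets then gives the desired orbifold $\hP$. At each vertex the three incident angles are $\tfrac{\pi}{3}, \tfrac{\pi}{2}, \tfrac{\pi}{2}$, summing to $\tfrac{4\pi}{3} > \pi$, so the vertex condition holds. The prismatic $3$--circuit condition is vacuous since $P$ has none. For the prismatic $4$--circuit condition the only circuit to examine is $\beta$, and by construction at least one of its four edges lies in $M$; hence the four angles sum to at most $\tfrac{\pi}{3} + 3\cdot\tfrac{\pi}{2} = \tfrac{11\pi}{6} < 2\pi$. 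Finally, $P$ is not a triangular prism (a triangular prism has a prismatic $3$--circuit through its three lateral edges, which $P$ lacks), so the remaining triangular-prism condition of Andreev's theorem does not apply.

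The main obstacle is the prismatic $4$--circuit: if all four of its edges were of order $2$, the four dihedral angles would sum to exactly $2\pi$, violating the strict inequality that Andreev's theorem demands for a compact hyperbolic realization. This is precisely why the \emph{forcing} form of Tutte's theorem — producing a factor through a prescribed edge $\mathfrak{e}$ — is essential, and why the hypothesis of at most one prismatic $4$--circuit, so that a single forced edge suffices, is exactly the right condition.
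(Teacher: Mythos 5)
Your proof is correct and follows essentially the same route as the paper's: Steinitz's theorem gives a $3$--regular, $3$--connected planar graph with an even number of vertices, Tutte's theorem (in its forced-edge form when the unique prismatic $4$--circuit is present) produces a factor whose edges receive order $\geq 3$ while all other edges receive order $2$, and Andreev's theorem then realizes the resulting angle data. The only cosmetic differences are that you fix the order $3$ on all factor edges and explicitly dispose of the triangular-prism case of Andreev's theorem, neither of which changes the argument.
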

\begin{proof}
Assume that four $2$-cells $F_i$, $F_j$, $F_k$ and $F_l$ of $P$ form a prismatic $4$--circuit.
Denote the edge $F_i \cap F_j$ by $\mathfrak{e}$.
By Steinitz's theorem, the graph $\G= \G(P)$ of $P$ is $3$--connected (See Gr{\"u}nbaum \cite[Chapter 13]{Grunbaum03}). 
Since $P$ is simple,
$\G$ is regular of degree $3$ and the number of vertices is even. 
By Tutte's theorem, $\G$ has a factor $\F$ containing
$\mathfrak{e}$. If $P$ has no prismatic $4$--circuit, then we choose an arbitrary factor $\F$ of $\G$.
Every vertex of $P$ is incident with two edges in $\G \backslash \F$ and one edge in $\F$.
Observe that
\begin{displaymath}
\tfrac{\pi}{k} + \tfrac{\pi}{2} + \tfrac{\pi}{2} > \pi \quad \text{and} \quad \tfrac{\pi}{k}
 + \tfrac{\pi}{2} + \tfrac{\pi}{2} + \tfrac{\pi}{2} < 2\pi \quad \text{for every integer } k \geq 3.
\end{displaymath}
Andreev's theorem (see \cite[Theorem 1.4 and Proposition 1.5]{Roeder07})
yields a compact hyperbolic Coxeter $3$--orbifold $\hP$ such that every edge in
$\G \backslash \F$ (resp. $\F$) is of order $2$ (resp. of order $k \neq 2$), 
corresponding to a dihedral angle $\tfrac{\pi}{2}$ (resp. $\tfrac{\pi}{k}$).
\end{proof}

Let $\G$ be a finite graph, and let $L$ be a set.
Denote by $E(\G)$ the set of edges of $\G$.
A function $\f \co  E(\G) \rightarrow L$ is called an \emph{edge-labeling function}, and
we call a pair $(\G,\f)$ an \emph{edge-labeled graph}.
An edge $\mathfrak{e}$ is called an \emph{$l$--edge} if $\f(\mathfrak{e})=l$.

In this section, we consider the edge-labeled graph $(\G,\f)$ satisfying the following conditions:
\begin{enumerate}
\item[(E1)] $\G$ is simple, planar and $3$--connected. 
\item[(E2)] $\G$ is regular of degree $3$.
\item[(E3)] the set $L$ of labels is $\{ 0, 1\}$.
\item[(E4)] every vertex of $\G$ is incident with three edges $\mathfrak{e}_1$, $\mathfrak{e}_2$ and $\mathfrak{e}_3$
such that
\begin{displaymath}
\f(\mathfrak{e}_1)+\f(\mathfrak{e}_2)+\f(\mathfrak{e}_3) \equiv 1 \;\; \text{(mod 2)}.
\end{displaymath}
\end{enumerate}
If $(\G,\f)$ can be ordered so that each face contains at most three $0$--edges in faces of higher indices, 
$(\G,\f)$ is said to be \emph{weakly orderable}. 
(Here (E1) holds if and only if $\G$ is isomorphic to the $1$--skeleton of a properly convex $3$--polytope by Steinitz's theorem.) 

Let $P$ be a properly convex $3$--polytope, and let $\G$ be the $1$--skeleton of $P$ as an abstract $3$--polyhedron.
The graph $\G$ is embedded in the $2$--dimensional sphere $S^2$ homeomorphic to the boundary of $P$.
We call a face of $P$ a {\em face} of $\G$.
The respective numbers of vertices, edges and faces of $\G$ shall be denoted by $v$, $e$ and $f$.

\begin{lemma}\label{lem:labeled}
Let  $(\G,\f)$ be an edge-labeled graph satisfying the condition (E1)--(E4). 
Then the number of $0$--edges of at least one face $F$ of $\G$ is less than or equal to $3$.  
\end{lemma}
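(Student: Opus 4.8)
The plan is to first use the parity hypothesis (E4) to determine the global shape of the set of $0$--edges, and then to read off the bound on a single face from that shape together with the planarity hypothesis (E1) and the combinatorial constraints that the underlying polytope $P$ carries throughout this section (no prismatic $3$--circuit and at most one prismatic $4$--circuit). The first, purely structural, step is clean; the genuine content lives in the second.

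First I would observe that (E2), (E3) and (E4) together force every vertex of $\G$ to meet an \emph{even} number of $0$--edges: the three labels at a vertex sum to an odd number, so the number of $1$--edges there is $1$ or $3$, hence the number of $0$--edges is $2$ or $0$. Therefore the subgraph $H\subset\G$ consisting of the $0$--edges has all degrees in $\{0,2\}$, so $H$ is a disjoint union of isolated vertices and simple cycles, which I will call the \emph{$0$--cycles}; by (E1) each is a simple closed curve on the sphere $S^2$ carrying the embedding of $\G$. Fixing a face $F$, write its boundary cyclically as $f_1,\dots,f_k$ with vertices $v_1,\dots,v_k$ and let $s_i$ be the unique third (spoke) edge at $v_i$. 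The even--degree rule then describes $\partial F$ locally: the boundary $0$--edges split into maximal arcs, at every interior vertex of an arc both incident boundary edges are $0$--edges so its spoke is a $1$--edge, while at each of the two endpoints of an arc exactly one boundary edge is a $0$--edge, which forces the spoke there to be a $0$--edge carrying the $0$--cycle away from $F$. Thus each maximal arc of boundary $0$--edges, together with the two $0$--spokes at its ends, is a connected subpath of a single $0$--cycle.

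The remaining task is to convert ``$F$ has at least four $0$--edges'' into a forbidden configuration. Here I would pass to the dual $(\partial P)^*$ and invoke the hypotheses on prismatic circuits. A short run of consecutive boundary $0$--edges, and in the degenerate case a face all of whose edges are $0$--edges, yields a short simple circuit in $(\partial P)^*$ whose crossed edges are the spokes along $\partial F$; by $3$--connectivity (Steinitz, see \cite{Grunbaum03}) and the distinctness of the vertices along $\partial F$ this circuit is prismatic of length $3$ or $4$. A prismatic $3$--circuit is excluded outright and only one prismatic $4$--circuit is permitted, so a fourth boundary $0$--edge cannot appear. That the hypotheses on $P$ are really needed is visible on the cube, whose graph satisfies (E1)--(E4) but which carries three prismatic $4$--circuits: the labelling that silvers the top and bottom squares and marks the four vertical edges puts four $0$--edges on a face, so it is precisely such configurations that the standing assumptions on $P$ (used throughout Theorem~\ref{thm:probability}) rule out.

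I expect the main obstacle to be exactly this last step. The bookkeeping that turns each possible pattern of $\ge 4$ boundary $0$--edges---a single long arc, several short arcs, or $\partial F$ equal to an entire $0$--cycle---into an honest prismatic $3$-- or $4$--circuit is delicate, because one must simultaneously check that the circuit produced is simple and has pairwise distinct endpoints, and one must account for the single prismatic $4$--circuit the hypotheses allow without double counting it. The most resistant sub-case will be a larger face (for instance a pentagon) whose boundary is an entire $0$--cycle, where the obvious circuit around $F$ is too long to be forbidden directly; ruling this out should require combining the local spoke pattern with the $2$--factor structure of the $0$--cycles and the absence of prismatic $3$--circuits elsewhere, rather than a single circuit around $F$.
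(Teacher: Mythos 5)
Your proposal attacks the wrong statement. Despite its phrasing, Lemma \ref{lem:labeled} is an \emph{existential} claim --- there is \emph{some} face with at most three $0$--edges --- as its use in the proof of Lemma \ref{lem:weakly} makes explicit (``$\G$ has a face $F$ such that \dots''). The paper proves it by a three-line averaging argument: Euler's formula together with $3$--regularity gives $v=2(f-2)$; condition (E4) forces each vertex to meet an even number, hence at most two, of $0$--edges, so $2e_2\le 2v=4(f-2)<4f$; and since each edge lies on exactly two faces, the faces carry $2e_2$ incidences with $0$--edges in total, so some face carries fewer than four. The universal statement you set out to prove is simply false under (E1)--(E4): your own cube labelling is already a counterexample, and the hypotheses you import to exclude it (no prismatic $3$--circuit, at most one prismatic $4$--circuit) are not hypotheses of this lemma --- they belong to Lemma \ref{lem:coloring} and Theorem \ref{thm:probability} --- and they could not be added, because Lemma \ref{lem:labeled} is invoked inside the induction of Lemma \ref{lem:weakly} on graphs obtained by repeated edge-deletion, where no control on prismatic circuits is maintained.

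Even granting those extra hypotheses, the second half of your argument fails on its own terms. Four or more $0$--edges on a face need not produce a prismatic $3$-- or $4$--circuit: take the dodecahedron (which has no prismatic $3$-- or $4$--circuits), and let the $1$--edges be the perfect matching pairing each vertex of the top and of the bottom face with its outward neighbour; then (E1)--(E4) hold and the top face has five $0$--edges. So the configuration you are trying to forbid genuinely occurs, and no bookkeeping over arcs of $0$--edges and spokes can rule it out. The correct move is to abandon the face-local analysis and count globally, as above; your first-paragraph observation that the $0$--edges form a disjoint union of cycles is correct but is not needed for this lemma.
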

\begin{proof}
Denote by $e_2$ the number of $0$--edges.
(E1) implies that $v-e+f = 2$, (E2) implies that $2e = 3v$, and (E3) and (E4) imply that $2e_2 \leq 2v$.
By an elementary computation, we obtain
$2e_2 \leq 4(f-2) < 4f.$
The conclusion is immediate.
\end{proof}

We define an {\em edge-deletion} for an edge-labeled graph satisfying the condition (E1)--(E4):
When each pair of edges ending at a vertex $\mathfrak{a}$ or $\mathfrak{b}$ of an edge $\mathfrak{e}$ in 
$(\G, \f)$ have the same labels, 
we can \emph{delete} $\mathfrak{e}$ from $(\G,\f)$  
and amalgamate the pair of edges incident to $\mathfrak{a}$ and the pair for $\mathfrak{b}$ (see Figure \ref{fig:deletion}).
We define the {\em edge-deletion} on $\G$ satisfying (E1) and (E2) similarly.
Edge-deletion preserves conditions (E2)--(E4) for $(\G, \f)$ ( just (E2) for $\G$). 

\begin{figure}[ht]
\labellist
\small\hair 2pt
\pinlabel $\mathfrak{a}$ at 92 162
\pinlabel $\mathfrak{b}$ at 92 34
\pinlabel $\mathfrak{e}$ at 103 99
\pinlabel $\longrightarrow$ at 226 99
\endlabellist
\centering
\includegraphics[height=3cm]{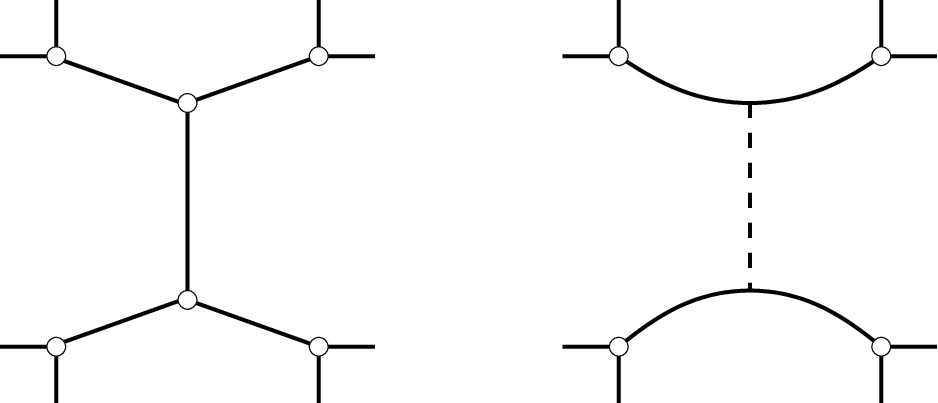}
\caption{An edge-deleting operation}
\label{fig:deletion}
\end{figure}

Let $\G$ be a graph satisfying the condition (E1)--(E2).
An edge $\mathfrak{e}$ of $\G$ is said to be \emph{removable} when the graph obtained from $\G$ 
by deleting the edge $\mathfrak{e}$ remains to satisfy the condition (E1) (and (E2) obviously).  

\begin{theorem}{\rm \cite[Corollary 2.7]{Fouquet11}}\label{thm:exmain}
Let $\G$ be a graph with more than $6$ edges satisfying the condition $(E1)$--$(E2)$, and
let $C$ be a cycle of $\G$. Then $C$ contains at least two removable edges.
\end{theorem}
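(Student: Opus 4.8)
The plan is to reduce removability to a purely combinatorial condition on edge cuts, exploit a parity constraint coming from the fact that $C$ is a cycle, and then run an innermost-piece argument on the nested family of small cuts. Throughout, recall that $\G$ is simple, planar, $3$--connected and cubic, and that more than $6$ edges means exactly $\G \ne K_4$.

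First I would characterize the removable edges. Deleting an edge and amalgamating the two resulting pairs of edges never destroys planarity, so only simplicity and $3$--connectivity can fail. Writing $e=\mathfrak{a}\mathfrak{b}$ with the other neighbours of $\mathfrak{a}$ being $\mathfrak{a}_1,\mathfrak{a}_2$ and those of $\mathfrak{b}$ being $\mathfrak{b}_1,\mathfrak{b}_2$, the deletion produces the new edges $\mathfrak{a}_1\mathfrak{a}_2$ and $\mathfrak{b}_1\mathfrak{b}_2$. Since $\G\ne K_4$, the degenerate case $\{\mathfrak{a}_1,\mathfrak{a}_2\}=\{\mathfrak{b}_1,\mathfrak{b}_2\}$ cannot occur (it would force $\{\mathfrak a_1,\mathfrak a_2\}$ to be a $2$--cut), so simplicity fails exactly when $\mathfrak{a}_1$ is adjacent to $\mathfrak{a}_2$ or $\mathfrak{b}_1$ to $\mathfrak{b}_2$, i.e.\ when $e$ lies in a triangle through $\mathfrak{a}$ or $\mathfrak{b}$; each such triangle is bounded by a nontrivial $3$--edge-cut of $\G$ containing $e$. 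For $3$--connectivity I would use that a cubic graph is $3$--connected if and only if it is $3$--edge-connected: the operation destroys $3$--connectivity precisely when the deletion turns a $3$--edge-cut through $e$ into a $2$--edge-cut, that is, when $e$ belongs to a nontrivial $3$--edge-cut. This yields the clean criterion that \emph{$e$ is removable if and only if $e$ lies in no nontrivial $3$--edge-cut of $\G$}. Dually, in the triangulation $\G^\ast$ this says $e^\ast$ is contractible if and only if it lies in no separating triangle, the same obstruction measured by the prismatic circuits of Theorem~\ref{thm:probability}.

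Next I would bring in the hypothesis that $C$ is a cycle. Any cycle meets any edge cut in an even number of edges, and a nontrivial $3$--edge-cut has only three edges, so $C$ contains either $0$ or $2$ of them. Hence every non-removable edge of $C$ is covered by a nontrivial $3$--edge-cut meeting $C$ in exactly one further edge of $C$, which already organizes the non-removable edges of $C$ into cut-pairs.

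Finally, to exhibit two removable edges I would analyze the family of nontrivial $3$--edge-cuts meeting $C$ through their nested (laminar) structure. Choosing an inclusion-minimal side $X$ over all such cuts, the arc $C\cap X$ enters and leaves $X$ through the two $C$--edges of that cut, and minimality forces an interior edge of this arc to lie in no deeper nontrivial $3$--edge-cut, hence to be removable; a symmetric choice on a second minimal piece, or on the complementary side, supplies the second removable edge. The main obstacle, and the part that genuinely needs the structure theory behind \cite{Fouquet11}, is handling \emph{crossing} $3$--edge-cuts and the smallest innermost pieces (triangles and $K_4$--type bricks): one must confirm that each chosen interior arc really carries a removable edge and that two such edges can always be produced rather than just one. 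This is exactly where the exclusion of $K_4$, i.e.\ the assumption of more than $6$ edges, is indispensable.
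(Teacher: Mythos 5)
First, a point of reference: the paper does not prove this statement at all --- it is imported verbatim from Fouquet--Thuillier \cite{Fouquet11}, so there is no internal argument to measure your attempt against, and any complete proof would necessarily count as ``a different route.'' Your first two steps are correct and are the standard way to set the problem up. The characterization ``$e$ is removable if and only if $e$ lies in no nontrivial $3$--edge-cut'' is right: your treatment of the simplicity failures via triangles, of the degenerate case $\{\mathfrak a_1,\mathfrak a_2\}=\{\mathfrak b_1,\mathfrak b_2\}$, and the use of the coincidence of vertex- and edge-connectivity for cubic graphs are all sound given $v(\G)\geq 6$. The parity observation that a cycle meets every $3$--edge-cut in $0$ or $2$ edges is also correct, as is the implicit fact that the two $C$--edges of such a cut cannot share their endpoint inside a side $X$ (otherwise deleting that vertex from $X$ would expose a $2$--edge-cut), so the arc $C\cap\G[X]$ always carries at least one edge.

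The gap is the entire third step, and you have in effect flagged it yourself. Two things are asserted rather than proved. (i) Minimality of the chosen side $X$ only excludes nontrivial $3$--edge-cuts with a side properly contained in $X$; to rule out a cut with side $Y$ \emph{crossing} $X$ and containing an edge $e'$ of the arc, you need the submodularity/uncrossing computation: all four quadrants are nonempty, so $d(X\cap Y)=d(X\setminus Y)=3$, both of these cuts contain $e'\in C$, minimality then forces $|X\cap Y|=|X\setminus Y|=1$, hence $|X|=2$ and $d(X)=3|X|-2|E(\G[X])|$ is even, contradicting $d(X)=3$. This is exactly the content you deferred to \cite{Fouquet11}; without it, ``hence removable'' is unsupported (though, as the sketch above shows, this half is fillable). (ii) More seriously, the \emph{second} removable edge is never produced: the complementary side $V\setminus X$ is not inclusion-minimal, so the same argument does not apply to the arc of $C$ it contains, and a second minimal side disjoint from the first need not exist. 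Since Lemma \ref{lem:weakly} genuinely needs two removable edges on the cycle $\partial F$ (one candidate may be discarded for having the wrong label), this is not a cosmetic omission but the substance of the corollary. As written, your argument is a correct reduction plus an honest pointer to the reference, not an independent proof.
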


\begin{lemma}\label{lem:weakly}
Let $P$ be a properly convex compact $3$--polytope, and let $\hP$ be the Coxeter $3$--orbifold arising from $P$.
Assume that every vertex of $\hP$ is incident with two edges of order $2$ and one edge of order $\geq 3$.
Then $\hP$ is weakly orderable.
\end{lemma}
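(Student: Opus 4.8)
The plan is to translate the hypothesis into the combinatorial language of the edge-labeled graph $(\G,\f)$ introduced above and then to exhibit the required ordering. Let $\G=\G(P)$ be the $1$--skeleton of $P$ and let $\f$ assign $0$ to every edge of order $2$ and $1$ to every other edge. Since $P$ is a simple $3$--polytope, $\G$ is simple, planar, $3$--connected (by Steinitz) and $3$--regular, so (E1)--(E2) hold, and (E3) holds by construction. The hypothesis that each vertex meets exactly two edges of order $2$ and one edge not of order $2$ says precisely that each vertex is incident with two $0$--edges and one $1$--edge, which gives (E4); in particular the $0$--edges form a $2$--factor of $\G$ and the $1$--edges a perfect matching. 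By the definitions, $\hP$ is weakly orderable if and only if $(\G,\f)$ is weakly orderable, so it suffices to order the faces of $\G$ so that each face has at most three $0$--edges in faces of higher index.

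First I would reformulate this as a degeneracy statement. Let $H$ be the graph whose vertices are the faces of $P$ and whose edges are the $0$--edges, each joining the two faces it separates. Because two faces of a $3$--connected $3$--polytope meet in at most one edge, $H$ is simple, and the number of $0$--edges of a face is exactly its degree in $H$. An ordering witnessing weak orderability is exactly a degeneracy ordering of $H$ in which each face has at most three neighbours of higher index; such an ordering exists once I show that every subgraph of $H$ has a vertex of degree at most $3$, since the ordering is then produced by repeatedly deleting a vertex of minimum degree and reversing.

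The key structural point I would establish is that $H$ is bipartite. Pass to the dual simplicial polytope $P^{*}$: its vertices are the faces of $P$, its edges carry the labels of $\f$, and each triangle of $P^{*}$ (dual to a vertex of $P$) has exactly two $0$--edges and one $1$--edge. Thus the $1$--edges pair the triangles of $P^{*}$, and merging each pair across its common $1$--edge turns $P^{*}$ into a quadrangulation $Q$ of $S^{2}$ whose $1$--skeleton is exactly $H$. Every face of $Q$ has length $4$, so $Q$, hence $H$, is bipartite; being also simple and planar, any $m\geq 3$ of its vertices span at most $2m-4$ edges, and the same holds for every induced subgraph. Hence every subgraph on $m'$ vertices spans fewer than $2m'$ edges, so has average degree less than $4$ and therefore a vertex of degree at most $3$ (subgraphs on at most two vertices being trivial). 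This is the subgraph version of the counting in Lemma \ref{lem:labeled}, and it completes the proof.

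The step that needs the most care — and the reason the surrounding text develops edge--deletion and Theorem \ref{thm:exmain} — is the reduction if one instead argues by induction on the number of edges: one deletes a removable $1$--edge $\mathfrak e=F\cap F'$ (removable by Fouquet--Thuillier, and a $1$--edge precisely so that edge--deletion preserves (E1)--(E4)), orders the smaller graph, and lifts. The hard part there is that deleting $\mathfrak e$ amalgamates the two $0$--edges at each of its endpoints, so the two faces opposite $\mathfrak e$ each acquire an extra $0$--edge when the merged face is split back into $F$ and $F'$; one must then place the merged face below both of those faces in the induced order to keep every count at most $3$, and control the two bottom faces. The bipartite argument above sidesteps this difficulty entirely, its only delicate point being the verification that $Q$ is a genuine simple quadrangulation, which is exactly where the simplicity and $3$--connectedness of $P$ enter.
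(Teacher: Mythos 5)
Your argument is correct, and it takes a genuinely different route from the paper's. The paper proves this lemma by induction on the number of faces: it uses Lemma \ref{lem:labeled} to find a single face $F$ with at most three $0$--edges, the Fouquet--Thuillier theorem (Theorem \ref{thm:exmain}) to find a removable edge on $\partial F$, the label-flipping trick to make that edge a $1$--edge, and edge-deletion to amalgamate $F$ with its neighbour and apply the induction hypothesis; placing $F$ at the bottom of the inherited order is what prevents the counts of the remaining faces from growing when the edge is reinserted, since the extra $0$--adjacencies created by splitting the merged face all point to the index-$1$ face. You instead observe that weak orderability is exactly $3$--degeneracy of the graph $H$ of order-$2$ adjacencies between faces, and you get the needed \emph{hereditary} degree bound in one stroke: $H$ is the $1$--skeleton of the quadrangulation of $S^2$ obtained from the dual triangulation by erasing the perfect matching of $1$--edges, hence bipartite and planar, hence every induced subgraph on $m'\geq 3$ vertices has at most $2m'-4$ edges and so a vertex of degree at most $3$. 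This upgrades the count in Lemma \ref{lem:labeled} --- which only produces one low-degree face and therefore forces the paper into an induction --- to the statement actually needed, dispenses with Theorem \ref{thm:exmain} entirely, and yields extra structure (bipartiteness of $H$) as a bonus; the paper's route, by contrast, stays closer to the edge-deletion machinery it has already set up for Section \ref{subs:Examples}. Two small points to tidy in your write-up: the implication ``all faces of $Q$ have even length, hence $Q$ is bipartite'' requires $H$ to be connected, which does hold because the regions of $S^2\setminus H$ are precisely the $v/2$ merged quadrilaterals, so $|V|-|E|+|F|=(2+\tfrac{v}{2})-v+\tfrac{v}{2}=2$ forces a single component; and the ordering you want is the deletion order itself (the first-deleted face receives index $1$, so its at most three $H$--neighbours all have higher index), not its reverse.
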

\begin{proof}
Let $\hat \G$ be the graph of the $3$--polytope $P$. Define the edge-labeling function $\hat \f$ by
\begin{equation*}
\hat \f(\mathfrak{e}) = \left \{
\begin{array}{rl}
0 & \text{if the edge $\mathfrak{e}$ is of order $2$},\\
1 & \text{otherwise}.
\end{array} \right.
\end{equation*}
Then the edge-labeled graph $(\hat \G, \hat \f)$ satisfies the conditions (E1)--(E4) by Steinitz's theorem.


Given a labeled graph $(\G, \f)$ satisfying (E1)--(E4) and a face $F$, 
we can reverse the label for every edge of $F$ and the new labeling function on $\G$ will still satisfy (E1)--(E4). 

We show that  if $(\G, \f)$ satisfies conditions (E1)--(E4), then $(\G, \f)$ is weakly orderable.

The proof proceeds by induction on the number $f$ of faces of $\G$.
The condition (E1) implies $f\geq 4$.  
We have $f=4$ if and only if $\G$ is the graph of a tetrahedron. In this case $\G$ is weakly orderable.

Now assume that $\G$ has $f$ faces for $f \geq 5$ and that any labeled graph $(\G', \f')$ satisfying (E1)--(E4) 
 is weakly orderable provided that the number of faces is $< f$.
By Lemma \ref{lem:labeled}, $\G$ has a face $F$ such that
the number of $0$--edges of $F$ is less than or equal to  $3$ as $e = 3(f-2)$ by the Euler formula. 
By Theorem \ref{thm:exmain}, the cycle $\partial F$ contains a removable edge $\mathfrak{e}$.
\begin{itemize}
\item If we have $\f(\mathfrak{e})=1$, then 
each pair of edges which are adjacent to a vertex of $\mathfrak{e}$ have the same label.
Then let $\f':= \f$. 
\item Otherwise, $\f(\mathfrak{e}) = 0$. 
We relabel every edge in the cycle $\partial F$ to become the edge of the opposite label, and
obtain the new label function $\f'$ of $\G$ such that $\f'(\mathfrak{e})=1$.
The resulting edge-labeled graph $(\G,\f')$ still satisfies the conditions (E1)--(E4).
Also each pair of edges which are adjacent to a vertex of $\mathfrak{e}$ have the same label.
\end{itemize}

Denote by $F'$ the face adjacent to $F$ such that $F \cap F' = \mathfrak{e}$.
We can delete the edge $\mathfrak{e}$ of $(\G,\f')$.
Two adjacent faces $F$ and $F'$ are amalgamated into a face $F''$ (See Figure \ref{fig:reduction}).

\begin{figure}[ht]
\labellist
\small\hair 2pt
\pinlabel $F$ at 40 99
\pinlabel $F'$ at 150 99
\pinlabel $\mathfrak{e}$ at 103 99
\pinlabel $\longrightarrow$ at 226 99
\pinlabel $F''$ at 410 99
\endlabellist
\centering
\includegraphics[height=3cm]{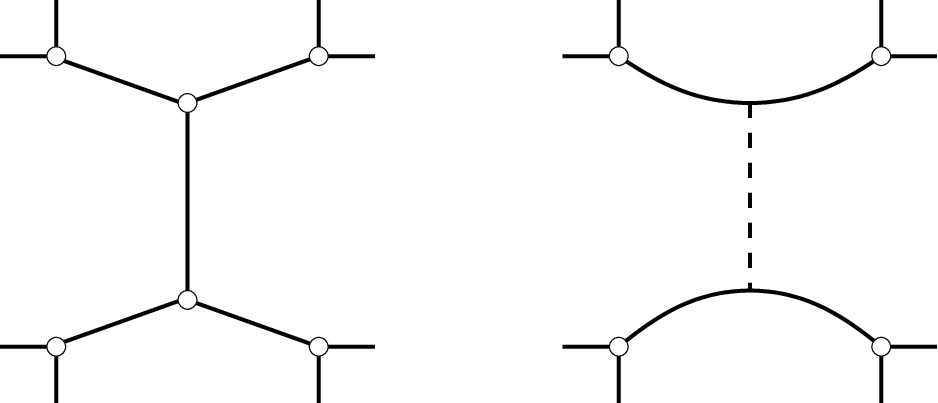}
\caption{Amalgamating two adjacent facets into a facet}
\label{fig:reduction}
\end{figure}

Now, the resulting edge-labeled graph $(\widetilde{\G},\widetilde{\f})$ has fewer faces
but still satisfies all conditions (E1)--(E4) since $\mathfrak{e}$ is removable. 
Using the induction hypothesis, the edge-labeled graph $(\widetilde{\G},\widetilde{\f})$ is weakly orderable, 
and hence we label the faces of $\widetilde{G}$  with the indices $\{2, 3, \dotsc , f\}$.
Now we reinsert $\mathfrak{e}$ and recover the old labels $\f$ of $\G$ 
\begin{itemize} 
\item by doing nothing or 
\item by reversing the labels of the edges of $F$ provided that we reversed the labels of edges of $F$ above. 
\end{itemize} 
Let $F$ be the first face of $(\G,\f)$, and we label all the other faces of $(\G,\f)$ by inheriting the ordering of faces of 
$(\widetilde{\G},\widetilde{\f})$. 
Since the number of $0$-edges of $F$ under $\f$ is less than or equal to $3$, 
$(\G,\f)$ is weakly orderable with the indices $\{1, 2, \dotsc , f\}$.
\end{proof}


\emph{Proof of Theorem \ref{thm:probability}.}
By Lemma \ref{lem:coloring}, there exists a compact hyperbolic Coxeter orbifold $\hP$ whose base polytope is combinatorially equivalent to $P$. 
Let $e$ be the number of edges of $P$, and let $p = \tfrac{1}{3}e$. Observe that $p \in \bZ_+$ by the vertex incidence condition. 
Let $\mathcal{N}(d)$ be the set of compact hyperbolic Coxeter orbifolds whose base polytopes are combinatorially equivalent to $P$ and 
whose edge orders are less than or equal to $d$.
For each integer $d \geq 7$ and $j \in \{0, 1, \dotsc , e\}$, we define
\begin{gather*}
\mathcal{N}_\omega(d) = \{\, \hP \in \mathcal{N}(d)  |\,\text{$\hP$ is weakly orderable} \}, \\
\mathcal{N}_j(d) = \{\, \hP \in \mathcal{N}(d) |\, \text{the number of edges of order $\geq 7$ in $\hP$ equals $j$ } \}.
\end{gather*}

Assume that $\hP$ is a compact hyperbolic Coxeter $3$--orbifold.
By the orbifold condition, 
if an edge $\mathfrak{e}$ of $\hP$ is
of order $\geq 7$, then edges which are adjacent to $\mathfrak{e}$ are of order $2$.
Therefore the number of edges of order $\geq 7$ in $\hP$ is less than or equal to $p= \tfrac{1}{3}e$.
In other words, $\mathcal{N}_j (d) = \emptyset$ for every $j > p$.

We have
\begin{displaymath}
|\mathcal{N}(d)|= \sum_{j=0}^{p} |\mathcal{N}_j (d)|.
\end{displaymath}
Moreover, observe that $\hP  \in \mathcal{N}_p (d)$ if and only if every vertex of $\hP$ is incident with two edges of order $2$ 
and one edge of order $\geq 7$. For any fixed integers $l, m \geq 2$,
\begin{equation*}
\begin{split}
&\tfrac{1}{k} + \tfrac{1}{l} + \tfrac{1}{m} > 1 \text{ for some integer } k \geq 7 \; \Leftrightarrow \;
\tfrac{1}{k} + \tfrac{1}{l} + \tfrac{1}{m} > 1 \text{ for each integer } k \geq 7.
\end{split}
\end{equation*}
Consequently for each $d \in \{7, 8, \dotsc \}$ we have
\begin{equation*}
|\mathcal{N}_j (d)| = |\mathcal{N}_j (7)| \cdot (d-6)^{j}.
\end{equation*}
Lemmas \ref{lem:coloring} shows that $\mathcal{N}_p(7) \ne \emptyset$ and Lemma \ref{lem:weakly} implies that
\begin{displaymath}
\frac{|\mathcal{N}_\omega(d)|}{|\mathcal{N}(d)|}
\geq \frac{|\mathcal{N}_p (d)|}{|\mathcal{N}(d)|}
=  \frac{|\mathcal{N}_p (7)| \cdot (d-6)^{p}}{ \sum_{j=0}^{p} |\mathcal{N}_j (7)| \cdot (d-6)^{j}}  
\end{displaymath}
establishing the result.
\hfill $\Box$


\begin{example}
Let $m$ be an integer $\geq 5$. A L\"{o}bell $3$--polytope $L(m)$ is  a $3$--polytope with $(2m+2)$ faces where 
upper and lower sides are $m$--gons, and the complementary surface is a union of $2m$ pentagons,
arranged similarly as in the dodecahedron. Figure \ref{fig:Lobell} shows the case when $m=6$.

\begin{figure}[ht]
\centering
\includegraphics[height=3cm]{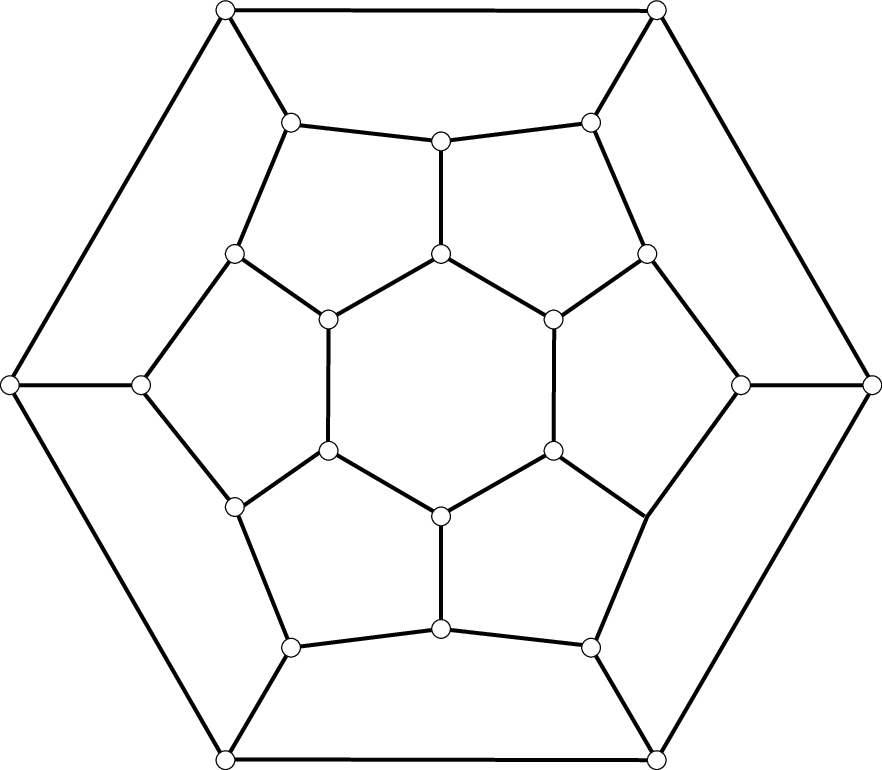}
\caption{A L\"{o}bell $3$--polytope $L(6)$}\label{fig:Lobell}
\end{figure}

For each $m \geq 5$, the L\"{o}bell $3$--polytope $L(m)$ has no prismatic $3$-- or $4$--circuits.
By Theorem \ref{thm:probability} almost all compact hyperbolic Coxeter $3$--orbifolds with the combinatorial type of $L(m)$ are weakly orderable.
\end{example}

\subsection{An example satisfying only the condition $(C1)$}
\label{subs:C2}
Let $d$ be a fixed integer $> 3$. We consider the compact hyperbolic Coxeter $3$--polytope $P$ shown in Figure \ref{fig:doublecube}.
Here, if an edge is labeled $d$, then its dihedral angle is $\tfrac{\pi}{d}$. Otherwise,
its dihedral angle is $\tfrac{\pi}{2}$.

\begin{figure}[ht]
\labellist
\small\hair 2pt
\pinlabel $F$ at 147 151
\pinlabel $F'$ at 339 151
\pinlabel $d$ at 245 308
\pinlabel $d$ at 245 168
\pinlabel $d$ at 245 23
\endlabellist
\centering
\includegraphics[height=2.5cm]{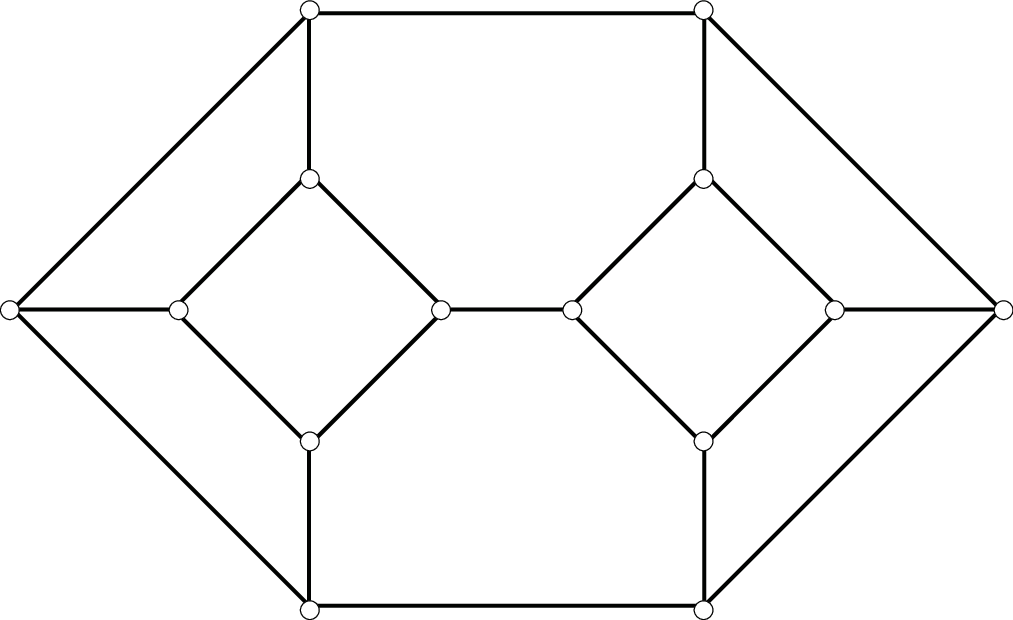}
\caption{A compact hyperbolic Coxeter $3$--polytope}\label{fig:doublecube}
\end{figure}

Obviously, $e_+(\hP)\! -3=0$.
However $\hP$ is not weakly orderable, since every facet in $\hP$ contains four edges of order $2$.

Observe that the cell structure of $P$ has a reflection-type topological symmetry interchanging $F$ and $F'$. Hence,
the Coxeter $3$--orbifold $\hP$ arising from $P$ has an order-two isometry fixing an embedded totally geodesic $2$--dimensional suborbifold $S$
by the Mostow rigidity. 
 \emph{Projective bendings} along $S$ provide
 non-trivial deformations in $\De(\hP)$ by Johnson and Millson \cite[Lemma 5.1]{Johnson87}.
Hence a neighborhood of the hyperbolic point in $\De(\hP)$ is \emph{not} a manifold of dimension $0$
while $\De(\hP)$ could still be a manifold. 
(See Choi, Hodgson and Lee \cite[Theorem 10]{Choi12} also.)

\subsection{An example satisfying only the condition $(C2)$}
\label{subs:C1}
In 1996, Esselmann \cite{Esselmann96} classified all the compact hyperbolic Coxeter polytopes whose 
combinatorial types are the products of two simplices of dimension greater than 1. 
Let $P$ be the compact hyperbolic Coxeter $4$--polytope whose combinatorial type is the product of two triangles and
whose Coxeter graph is shown in Figure \ref{fig:Esselmann}. See Vinberg \cite{Vinberg85} or Bourbaki \cite{Bourbaki02} for the definition of Coxeter graphs.

\begin{figure}[ht]
\labellist
\small\hair 2pt
\pinlabel $1$ at 183 317
\pinlabel $2$ at 39 177
\pinlabel $3$ at 183 37
\pinlabel $4$ at 327 177
\pinlabel $5$ at 511 68
\pinlabel $6$ at 511 287
\endlabellist
\centering
\includegraphics[height=2.2cm]{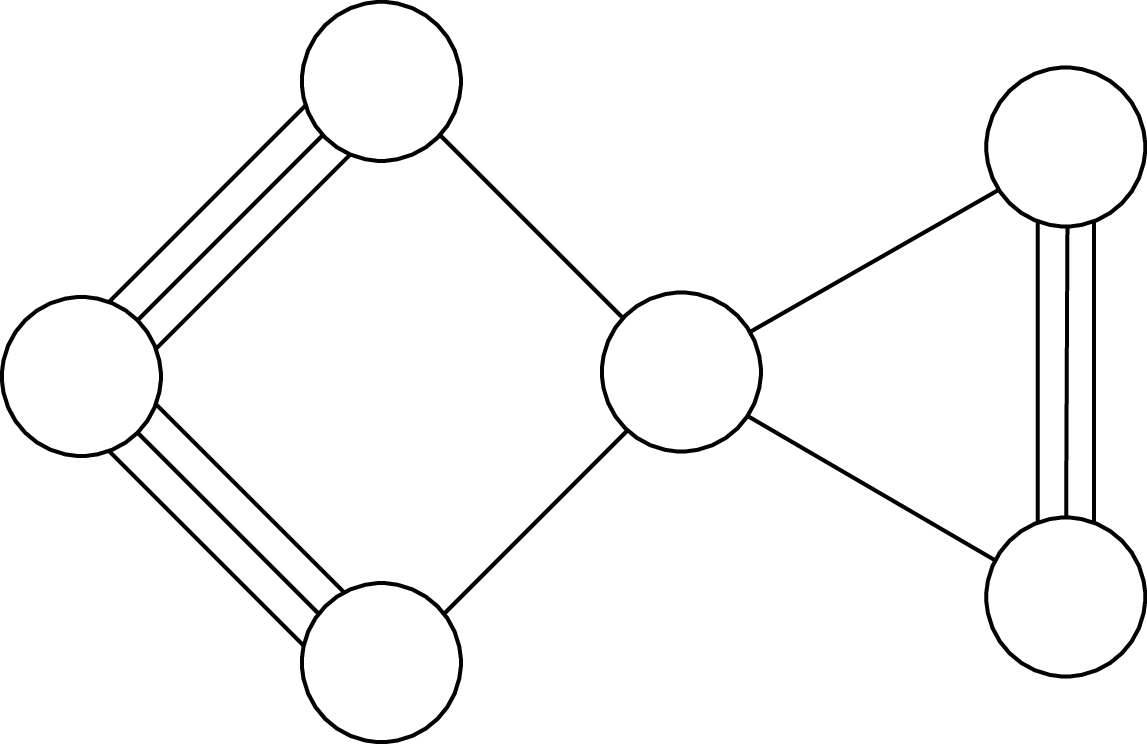}
\caption{One of Esselmann's polytopes}
\label{fig:Esselmann}
\end{figure}

Since the $4$--polytope $P$ has $6$ facets and $15$ ridges,
\begin{displaymath}
\delta_P = e-nf+\tfrac{n(n+1)}{2}=1 \neq 0, \text{ ie $P$ does not satisfy the condition $(C1)$.}
\end{displaymath}

However the Coxeter orbifold $\hP$ arising from $P$ is weakly orderable, ie $\hP$ satisfies the condition $(C2)$.
This can be shown by checking explicitly. 

We show that the hyperbolic point in $\De(\hP)$ for the hyperbolic Coxeter orbifold $\hP$
is singular. 

Assume that $\Gamma$ is a projective Coxeter group so that
$\Omega_\Gamma / \Gamma$ is homeomorphic to $\hP$, and $A$ is the Cartan matrix of $\Gamma$.
We make the Cartan matrix $A$ by a unique diagonal action (see Equation \eqref{eqn:diaga}) so that
\begin{align*}
a_{12} &= a_{21}=-2\cos\left(\tfrac{\pi}{5}\right), \quad a_{23} = a_{32}=-2\cos\left(\tfrac{\pi}{5}\right),
\quad a_{34} = a_{43}=-2\cos\left(\tfrac{\pi}{3}\right), \\
a_{45} &= a_{54}=-2\cos\left(\tfrac{\pi}{3}\right), \quad a_{56} = a_{65}=-2\cos\left(\tfrac{\pi}{5}\right).
\end{align*}
Define $x=-a_{14}$ and $y=-a_{46}$. The Cartan matrix $A=(a_{ij})$ of $\Gamma$ is as follows:
\begin{displaymath}
A = \left[ \begin{array}{cccccc}
2                       & -\tfrac{1+\sqrt{5}}{2} & 0                      & -x      & 0  & 0                      \\
-\tfrac{1+\sqrt{5}}{2}  & 2                      & -\tfrac{1+\sqrt{5}}{2} &  0      & 0  & 0                      \\
0                       & -\tfrac{1+\sqrt{5}}{2} & 2                      & -1      & 0  & 0                      \\
-x^{-1}                 & 0                      & -1                     & 2       & -1 & -y                     \\
0                       & 0                      & 0                      & -1      & 2  & -\tfrac{1+\sqrt{5}}{2} \\
0                       & 0                      & 0                      & -y^{-1} & -\tfrac{1+\sqrt{5}}{2} & 2
\end{array} \right].
\end{displaymath}
Moreover, $\rank A = 5$ if and only if $\det (A) = 0$.
By simple calculation, we obtain
\begin{displaymath}
\det (A) = \tfrac{1}{2xy}(8x-(5+\sqrt{5})y-(6-2\sqrt{5})xy-(5+\sqrt{5})x^2y+8xy^2)  = 0.
\end{displaymath}
Note that $x$ and $y$ are positive. By Corollary \ref{cor:identification}, the deformation space $\De(\hP)$  is homeomorphic to the solution space
\begin{displaymath}
\mathcal{S} = \{(x,y) \in \mathbb{R}_+^2 |\, f(x, y):= 8x-(5+\sqrt{5})y-(6-2\sqrt{5})xy-(5+\sqrt{5})x^2y+8xy^2 = 0 \},
\end{displaymath}
pictured in Figure \ref{fig:notmanifold}. 
\begin{figure}[ht]
\centering
\includegraphics[height=4cm]{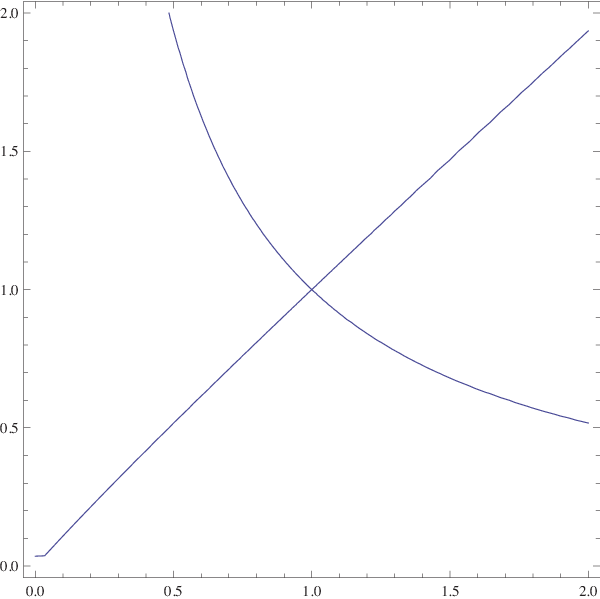}
\caption{$8x-(5+\sqrt{5})y-(6-2\sqrt{5})xy-(5+\sqrt{5})x^2y+8xy^2 = 0$}
\label{fig:notmanifold}
\end{figure}
By Vinberg \cite[Proposition 24]{Vinberg71}, 
$(1,1) \in \mathcal{S}$ corresponds to the unique hyperbolic point in $\De(\hP)$, and hence
any neighborhood of the hyperbolic point of $\De(\hP)$ is singular. (The polynomial $f(x, y)$ is irreducible.)

%
%
\bibliography{mybib}{}
\bibliographystyle{gtart}
\end{document}